\documentclass{amsart}
\usepackage{amssymb} 
\usepackage{graphicx}
\usepackage[curve]{xypic}
\usepackage{tikz}
\usetikzlibrary{arrows,snakes,shapes,shadows}
\usepackage{hyperref}

\newcommand{\grassman}{\mathbf G}
\newcommand\ghlim{{\lim}^{\text{\tiny GH}}}

\newbox\mybox
\def\overtag#1#2#3{\setbox\mybox\hbox{$#1$}\hbox to
  0pt{\vbox to 0pt{\vglue-#3\vglue-\ht\mybox\hbox to \wd\mybox
      {\hss$\ss#2$\hss}\vss}\hss}\box\mybox}
\def\undertag#1#2#3{\setbox\mybox\hbox{$#1$}\hbox to 0pt{\vbox to
    0pt{\vglue#3\vglue\ht\mybox\hbox to \wd\mybox
      {\hss$\ss#2$\hss}\vss}\hss}\box\mybox}
\def\lefttag#1#2#3{\hbox to 0pt{\vbox to 0pt{\vglue -6pt\hbox to
      0pt{\hss$\ss#2$\hskip#3}\vss}}#1}
\def\righttag#1#2#3{\hbox to 0pt{\vbox to 0pt{\vglue -6pt\hbox to
      0pt{\hskip#3$\ss#2$\hss}\vss}}#1}
\let\ss\scriptstyle
\def\splicediag#1#2{\xymatrix@R=#1pt@C=#2pt@M=0pt@W=0pt@H=0pt}
\def\Dot{\lower.2pc\hbox to 2pt{\hss$\bullet$\hss}}
\def\Circ{\lower.2pc\hbox to 2pt{\hss$\circ$\hss}}
\def\Vdots{\raise5pt\hbox{$\vdots$}}
\newcommand\lineto{\ar@{-}}
\newcommand\dashto{\ar@{--}}
\newcommand\dotto{\ar@{.}}

\newcommand\denom{{\operatorname{denom}}}
\let\cal\mathcal
\renewcommand{\setminus}{\smallsetminus}
\newcommand\Q{{\mathbb Q}}
\newcommand\R{{\mathbb R}}
\newcommand\C{{\mathbb C}}
\newcommand\Z{{\mathbb Z}}

\renewcommand\L{{$\cal L$}}
\newcommand\T{{$\cal T$}}
\newcommand\A{{$\cal A$}}
\newcommand\Nn{{\mathcal N}}

\newtheorem{theorem}{Theorem}[section]
\newtheorem{proposition}[theorem]{Proposition}
\newtheorem*{theorem*}{Theorem}
\newtheorem{corollary}[theorem]{Corollary}
\newtheorem{lemma}[theorem]{Lemma}

\theoremstyle{definition}
\newtheorem{example}[theorem]{Example}
\newtheorem{remark}[theorem]{Remark}
\newtheorem*{remark*}{Remark}
\newtheorem{definition}[theorem]{Definition}

\renewcommand{\int}{\operatorname{int}}
\newcommand{\lcm}{\operatorname{lcm}}

\begin{document}
\title[Bilipschitz classification
  of normal surface singularities]{The thick-thin decomposition 
and the bilipschitz classification
  of normal surface singularities}
\author{Lev Birbrair}\address{Departamento de Matem\'atica,
  Universidade Federal do Cear\'a (UFC), Campus do Picici, Bloco 914,
  Cep. 60455-760. Fortaleza-Ce, Brasil} \email{birb@ufc.br}
\author{Walter D Neumann}\address{Department of Mathematics, Barnard
  College, Columbia University, 2990 Broadway MC4429, New York, NY
  10027, USA} \email{neumann@math.columbia.edu} \author{Anne Pichon}
\address{Aix-Marseille Universit\'e, IML\\ FRE 3529 CNRS,
   Campus de Luminy - Case 907\\ 13288
  Marseille Cedex 9, France} \email{anne.pichon@univ-amu.fr}
\subjclass{14B05, 32S25, 32S05, 57M99} \keywords{bilipschitz
  geometry, normal surface singularity, thick-thin decomposition}
\begin{abstract}We describe a natural decomposition of a normal
  complex surface singularity $(X,0)$ into its ``thick'' and ``thin''
  parts. The former is essentially metrically conical, while the
  latter shrinks rapidly in thickness as it approaches the origin. The
  thin part is empty if and only if the singularity is metrically
  conical; the link of the singularity is then Seifert fibered. In
  general the thin part will not be empty, in which case it always
  carries essential topology. Our decomposition has some analogy with
  the Margulis thick-thin decomposition for a negatively curved
  manifold. However, the geometric behavior is very different; for
  example, often most of the topology of a normal surface singularity
  is concentrated in the thin parts.

  By refining the thick-thin decomposition, we then give a complete
  description of the intrinsic bilipschitz geometry of $(X,0)$ in
  terms of its topology and a finite list of numerical bilipschitz
  invariants.
\end{abstract}
\maketitle
 
\section{Introduction}

Lipschitz geometry of complex singular spaces is an intensively
developing subject.  In \cite{SS}, L. Siebenmann and D. Sullivan
conjectured that the set of Lipschitz structures is tame, i.e., the
set of equivalence classes of complex algebraic sets in $\C^n$,
defined by polynomials of degree less than or equal to $k$ is
finite. One of the most important results on Lipschitz geometry of
complex algebraic sets is the proof of this conjecture by T. Mostowski
\cite{mostowski} (generalized to the real setting by Parusi\'nski
\cite{parusinski1,parusinski}).  But so far, there exists no explicit
description of the equivalence classes, except in the case of complex
plane curves which was studied by Pham and Teissier
\cite{pham-teissier} and Fernandes \cite{fernandes}. They show that
the embedded Lipschitz geometry of plane curves is determined by the
topology. The present paper is devoted to the case of complex
algebraic surfaces, which is much richer.

Let $(X,0)$ be the germ of a normal complex surface singularity. Given
an embedding $(X,0)\subset (\C^n,0)$, the standard hermitian metric on
$\C^n$ induces a metric on $X$ given by arc-length of curves in $X$
(the so-called ``inner metric''). Up to bilipschitz equivalence this
metric is independent of the choice of embedding in affine space.

It is well known that for all sufficiently small $\epsilon>0$ the
intersection of $X$ with the sphere $S_\epsilon\subset \C^n$ about $0$
of radius $\epsilon$ is transverse, and the germ $(X,0)$ is therefore
``topologically conical,'' i.e., homeomorphic to the cone on its link
$X\cap S_\epsilon$ (in fact, this is true for any semi-algebraic
germ).  However, $(X,0)$ need not be ``metrically conical''
(bilipschitz equivalent to a standard metric cone). The first example
of a non-metrically-conical $(X,0)$ was given in \cite{BF08}, and the
examples in \cite{BFN1, BFN2, BFN4} then suggested that failure
of metric conicalness is common. In \cite{BFN4} it is also shown that
bilipschitz geometry of a singularity may not be determined by its
topology.

In those papers the failure of metric conicalness and differences in
bilipschitz geometry were determined by
local obstructions: existence of topologically non-trivial subsets
of the link of the singularity (``fast loops'' and ``separating
sets'') whose size shrinks faster than linearly as one approaches the
origin. 

In this paper we first describe a natural decomposition of the germ
$(X,0)$ into two parts, the \emph{thick} and the \emph{thin} parts,
such that the thin part carries all the ``non-trivial'' bilipschitz
geometry, and we later refine this to give a classification of the
bilipschitz structure. Our thick-thin decomposition is somewhat
analogous to the Margulis thick-thin decomposition of a negatively
curved manifold, where the thin part consists of points $x$ which lie
on a closed essential (i.e., non-nullhomotopic) loop of length
$\le 2\eta$ for some small $\eta$. A (rough) version of our thin part
can be defined similarly using essential loops in $X\setminus\{0\}$,
and length bound of the form $\le |x|^{1+c}$ for some small $c$. We
return to this in section \ref{sec:uniqueness}.
\begin{definition}[Thin]\label{def:thin} A semi-algebraic germ
  $(Z,0)\subset (\R^N,0)$ of pure dimension $k$ is \emph{thin} if its
  tangent cone $T_0Z$ has dimension strictly less than $k$.
\end{definition}
This definition only depends on the inner metric of $Z$ and not on the
embedding in $\R^N$. Indeed, instead of $T_0Z$ one can use the
metric tangent cone $\mathcal T_0Z$ of Bernig and Lytchak in the
definition, since it is a bilipschitz invariant for the inner metric
and maps finite-to-one to $T_0Z$ (see 
\cite{BL}). The metric tangent cone is discussed further in Section
\ref{sec:tangent cone}, where we show that it can be recovered from
the thick-thin decomposition.

``Thick'' is a generalization of
  ``metrically conical.'' Roughly speaking, a thick
algebraic set is obtained by slightly inflating a metrically conical
set in order that it can interface along its boundary
with thin parts. The precise definition is as follows:

\begin{definition}[Thick]\label{def:thick}
Let $B_\epsilon\subset
  \R^N$ denote the ball of radius $\epsilon$ centered at the origin,
  and $S_\epsilon$ its boundary.  A semi-algebraic germ $(Y,0)\subset
  (\R^N,0)$ is \emph{thick} if there exists $\epsilon_0>0$ and $K\ge
  1$ such that $Y\cap B_{\epsilon_0}$ is the union of subsets
  $Y_\epsilon$, $\epsilon\le\epsilon_0$ which are metrically conical
  with bilipschitz constant $K$ and satisfy the following properties
  (see Fig.~\ref{fig:1}):
  \begin{enumerate}
\item $Y_\epsilon\subset B_\epsilon$, $Y_\epsilon\cap
  S_\epsilon=Y\cap S_\epsilon$ and $Y_\epsilon$ is metrically
  conical as a cone on its link $Y\cap S_\epsilon$.
\item For $\epsilon_1<\epsilon_2$ we have $Y_{\epsilon_2}\cap
  B_{\epsilon_1}\subset Y_{\epsilon_1}$ and this embedding respects
  the conical structures.  Moreover, the difference $(
  Y_{\epsilon_1}\cap S_{\epsilon_1})\setminus (Y_{\epsilon_2}\cap
  S_{\epsilon_1})$ of the links of these cones 
  homeomorphic to $\partial(Y_{\epsilon_1}\cap
  S_{\epsilon_1})\times[0,1)$.
  \end{enumerate}
  \end{definition}

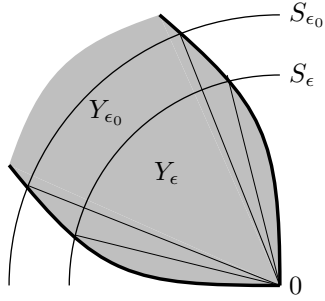
\begin{figure}[ht]
\centering
\begin{tikzpicture} 
 \node at(4,0)[right]{$0$};
 \node at(4,2.8)[right]{$ S_{\epsilon}$};
 \node at(4,3.6)[right]{$ S_{\epsilon_0}$};
 \path[ fill=lightgray] 
(4,0) .. controls (2,0) and (1.6,0.1)  .. (0.4,1.6) (0.4,1.6)--(2.4,3.6)
 (2.4,3.6)   .. controls (3.8,2.4)   and (4,2)  .. (4,0); 
 \path[ fill=lightgray]  (4,0)--(0.4,1.6).. controls (0.8,2.8) and 
(1.2,3.2)..(2.4,3.6)--cycle;
\draw[line width=1.3pt ]  (4,0) .. controls (2,0) and (1.6,0.1)  .. (0.4,1.6);
\draw[line width=1.3pt ]  (4,0) .. controls (4,2) and (3.8,2.4)  .. (2.4,3.6);
\draw[line width=0.5pt ]  (0.4,0) arc (180:90:3.6);
\draw[line width=0.5pt ]  (1.2,0) arc (180:90:2.8);
\draw[line width=0.25pt ] (4,0)--(0.65,1.33); 
\draw[line width=0.25pt ] (4,0)--(2.65,3.4); 
\draw[line width=0.25pt ] (4,0)--(1.28,0.67); 
\draw[line width=0.25pt ] (4,0)--(3.3,2.8); 
 \node at(2.5,1.5)[]{$ Y_{\epsilon}$};
 \node at(1.7,2.3)[]{$ Y_{\epsilon_0}$};
 \end{tikzpicture} 
  \caption{Thick germ}
  \label{fig:1}
\end{figure}

Clearly, a semi-algebraic germ cannot be both thick and thin. The
following proposition helps picture ``thinness''. Although it is well known,
we give a quick proof in Section \ref{sec:thin} for convenience.

Let $1< q\in \Q$. A \emph{$q$-horn neighborhood} of a semi-algebraic
germ $(A,0)\subset (\R^N,0)$ is  a set of the form $\{x\in \R^n\cap
B_\epsilon:d(x,A)\le c|x|^q\}$ for some $c>0$.   
\begin{proposition}\label{prop:thin nh} 
  Any semi-algebraic germ $(Z,0)\subset (\R^N,0)$ is contained in some $q$-horn
  neighborhood of its tangent cone $T_0 Z$. 
% In other words, there exist
%   $\epsilon_0>0$, $c>0$ and $q>1$ such that $Z\cap B_\epsilon$ is
%   contained in a $c\epsilon^q$-neighborhood of $T_0 Z \cap B_\epsilon$
%   for all $\epsilon\le \epsilon_0$.
\end{proposition}
 
\noindent For example, the set $Z=\{(x,y,z)\in \R^3: x^2+y^2\le z^3\}$
gives a thin germ at $0$ since it is a $3$-dimensional germ whose
tangent cone is the $z$-axis.  The intersection $Z\cap B_\epsilon$
is contained in a closed $3/2$-horn neighborhood of the
$z$-axis.  The complement in $\R^3$ of this thin set is thick. 

\medskip 
For
any subgerm $(A,0)$ of $(\C^n,0)$ or $(\R^N,0)$ we write
$$A^{(\epsilon)}:=A\cap S_\epsilon\,\subset\, S_\epsilon\,.$$
In particular, when $A$ is semi-algebraic and $\epsilon$ is
sufficiently small, $A^{(\epsilon)}$ is the $\epsilon$-link of
$(A,0)$.
\begin{definition}[Thick-thin decomposition]\label{def:thick-thin}
  A \emph{thick-thin decomposition} of the normal complex surface germ
  $(X,0)$ is a decomposition of it as a union of germs of pure
  dimension $4$:
  \begin{equation}
    \label{eq:thick-thin}
    (X,0)=\bigcup_{i=1}^r(Y_i,0)\cup\bigcup_{j=1}^s(Z_j,0)\,,
  \end{equation}
  such
  that the  $Y_i\setminus\{0\}$ and $Z_j\setminus\{0\}$ are connected and:
  \begin{enumerate}
  \item Each $Y_i$ is thick and each $Z_j$ is thin.
  \item\label{it:1} The $Y_i\setminus\{0\}$ are pairwise disjoint and the
    $Z_j\setminus\{0\}$ are pairwise disjoint.
  \item\label{it:2} If $\epsilon_0$ is chosen small enough that
    $S_\epsilon$ is transverse to each of the germs $(Y_i,0)$
    and $(Z_j,0)$ for $\epsilon\le \epsilon_0$, then 
    $X^{(\epsilon)}=\bigcup_{i=1}^rY^{(\epsilon)}_i\cup\bigcup_{j=1}^sZ^{(\epsilon)}_j$
    decomposes the $3$-manifold $X^{(\epsilon)}\subset
    S_\epsilon$ into connected submanifolds with boundary,
    glued along their boundary components.
\end{enumerate}
We call the links $Y_i^{(\epsilon)}$ and $Z_j^{(\epsilon)}$ of the
thick and thin pieces 
\emph{thick and thin zones} of the link $X^{(\epsilon)}$.
\end{definition}
\begin{definition}\label{def:minimal}
  A thick-thin decomposition is \emph{minimal} if
  \begin{enumerate}
  \item\label{it:min1} the tangent cone of
  its thin part $\bigcup_{j=1}^sZ_j$ is contained in the
  tangent cone of the thin part 
  of any other thick-thin decomposition and
\item the number $s$ of its thin pieces is minimal among thick-thin
  decompositions satisfying \eqref{it:min1}.
  \end{enumerate}
\end{definition}
The following  theorem expresses the existence and uniqueness of
a minimal thick-thin decomposition for a normal complex surface germ $(X,0)$.
\begin{theorem}\label{th:uniqueness}
  A minimal thick-thin decomposition of $(X,0)$ exists. For any two
  minimal thick-thin decompositions of $(X,0)$ there exists $q>1$ and
  a homeomorphism of the germ $(X,0)$ to itself which takes the one
  decomposition to the other and moves each $x\in X$ distance at most
  $|x|^q$.
\end{theorem}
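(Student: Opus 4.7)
The plan is to construct a minimal thick-thin decomposition from a good resolution $\pi : \widetilde X \to X$ and then to derive the uniqueness statement from Proposition \ref{prop:thin nh}.

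For existence, I would begin with a good resolution whose exceptional divisor $E = \bigcup E_i$ has dual graph $\Gamma$. The link $X^{(\epsilon)}$ is then a plumbed graph manifold whose JSJ decomposition matches a natural subdivision of $\Gamma$ into nodes and chains. I would call a vertex $E_i$ a \emph{thick node} if a small plumbing neighborhood of $E_i$ in the link, taken at radius $\epsilon$, is (after rescaling by $1/\epsilon$) bilipschitz to a fixed Seifert-fibered piece with constants independent of $\epsilon$. Chains of rational curves connecting thick nodes, and any non-Seifert-fibered pieces, are collected into thin candidates. Lifting this partition by $\pi$ and pushing down yields subgerms $Y_i$ and $Z_j$ of $X$. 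Checking thickness of each $Y_i$ means verifying the nested cone structure of Definition \ref{def:thick}; this follows from choosing representatives via a radial flow adapted to the resolution. Checking thinness of each $Z_j$ amounts to showing that a chain-neighborhood has tangent cone of dimension strictly less than $4$ at $0$, which is a valuative computation.

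For minimality, I would iteratively absorb any thin piece $Z_j$ into an adjacent thick piece whenever the union remains thick; what remains is a collection whose tangent cone of the thin part is contained in the tangent cone of the thin part of any other thick-thin decomposition. Condition (\ref{it:min1}) of Definition \ref{def:minimal} pins down the tangent cone $T$ of the thin part, and condition (2) then forces the combinatorial pattern (number and adjacency of thin zones) to be uniquely determined.

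For uniqueness, let $(\{Y_i\},\{Z_j\})$ and $(\{Y'_i\},\{Z'_j\})$ be two minimal thick-thin decompositions. By condition (\ref{it:min1}), the tangent cones of their thin parts coincide with a common cone $T$. By Proposition \ref{prop:thin nh}, there exist constants $c > 0$ and $q > 1$ such that both thin parts lie in the horn neighborhood of $T$ of width $c\epsilon^q$ inside $B_\epsilon$. Outside a slightly enlarged horn of $T$, both decompositions consist of thick pieces whose cone structures are determined by $T_0 X$, so they can be identified by a bilipschitz-identity isotopy. Inside the horn, I would build a semi-algebraic isotopy of $X$ carrying the $Z_j$ onto the $Z'_j$ and the $Y_i$ onto the $Y'_i$ by interpolating radially; the horn bound guarantees that each point is moved at most $\|x\|^{q'}$ after possibly decreasing the exponent to some $q' > 1$.

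The main obstacle is the existence step: producing pieces satisfying the strong nested cone structure of Definition \ref{def:thick} rather than a mere bilipschitz-to-cone condition. This forces a delicate choice of radial flow on $X$ that is simultaneously adapted to the resolution, to the Seifert fibrations on the thick zones of the link, and to the horn boundaries between thick and thin zones. A secondary difficulty lies in showing that combinatorial minimality is well-defined, which requires a merging argument ruling out decompositions with spurious small thin pieces.
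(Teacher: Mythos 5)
Your proposal has genuine gaps in all three of its main steps, and each one is where the paper has to do real work.

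For existence, defining a ``thick node'' by demanding that its rescaled plumbing neighborhood be bilipschitz to a fixed Seifert piece is essentially circular: it restates the conclusion rather than identifying which vertices satisfy it. The paper instead gives an algebraic criterion: it takes a resolution that factors through the normalized blowup of the origin and declares the \L-nodes to be the vertices whose exceptional curves meet the strict transform of a generic hyperplane section; the thin pieces then come from the Tjurina components (not ``chains'') that are not bamboos. That this is the right criterion is nontrivial --- thinness requires the multiplicity estimates of Lemma~\ref{le:1}, and thickness requires controlling the local bilipschitz constant of a generic plane projection away from the polar curve (the Very Thin Zone Lemma~\ref{le:very thin}), with an additional carrousel argument for the bamboos. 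Nothing in your proposal identifies this criterion or suggests how to verify it. Also, the Tjurina components can carry a lot of the topology of the link (not just chains of rational curves), so ``collecting chains into thin candidates'' does not produce the right decomposition.

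For minimality, the iterative absorption scheme is underspecified: you do not show that the result of absorption is still a thick-thin decomposition, that it terminates, or that it is canonical. The paper's argument is of a different nature entirely: it shows (Theorem~\ref{th:thin has fast loops}) that every thin piece contains fast loops, so that the tangent line to any $Z_j$ must also be tangent to a thin piece of any competing decomposition, and that disjointness of the $N(\Gamma_j)$ then pins down the count. Without the fast-loop input you have no mechanism to conclude condition~(\ref{it:min1}), and your appeal to ``ruling out spurious small thin pieces'' is exactly the hard step you leave undone.

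For uniqueness, the step ``outside a slightly enlarged horn of $T$ both decompositions consist of thick pieces whose cone structures are determined by $T_0X$'' is not true in the sense you need: two thick germs with the same tangent cone need not have homeomorphic links, and you must show that the two decompositions actually match topologically before any isotopy can be built. The paper handles this by exploiting the nested cone structure of Definition~\ref{def:thick}: it interleaves the cone boundaries $\partial_0 Y_{\epsilon_0}\supset \partial_0 Y'_{\epsilon_1}\supset\partial_0 Y_{\epsilon_2}\supset\partial_0 Y'_{\epsilon_3}$ and then uses actual 3-manifold topology (Stallings's h-cobordism theorem, or Waldhausen's classification of incompressible surfaces in $F\times I$) to show the intermediate regions are collars, whence $Z^{(\epsilon)}$ and $(Z')^{(\epsilon)}$ are homeomorphic. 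Your ``interpolating radially'' step presupposes exactly this homeomorphism; as written it is a placeholder for the core of the proof rather than a completion of it. The distance bound $\|x\|^q$ does fall out naturally once the homeomorphism exists, as you say, so that final estimate is fine.

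In short, your outline correctly identifies that the proof should go through a resolution, a canonical-tangent-cone argument, and a horn estimate, but it skips the three nontrivial ingredients: the \L-node criterion with its polar-curve justification, the fast-loop obstruction that forces minimality, and the collar/h-cobordism argument that underlies uniqueness.
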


The homeomorphism in the above theorem is not necessarily bilipschitz,
but the bilipschitz classification which we describe later leads to a
``best'' minimal thick-thin decomposition, which is unique up to
bilipschitz homeomorphism.
 
\begin{theorem}[Properties]\label{th:main in intro}A
    minimal thick-thin decomposition of $(X,0)$ as in equation
    \eqref{eq:thick-thin} satisfies $r\ge 1$, $s\ge 0$ and has the
  following properties for $0<\epsilon\le \epsilon_0$:
  \begin{enumerate}
  \item\label{it:thick} Each thick zone $Y^{(\epsilon)}_i$ is a
    Seifert fibered manifold.
  \item\label{it:thin} Each thin zone $Z^{(\epsilon)}_j$ is a graph
    manifold (union of Seifert manifolds glued along boundary
    components) and not a solid torus.
\item\label{it:fibration} There exist constants $c_j>0$
    and $q_j>1$ and fibrations $\zeta_j^{(\epsilon)}\colon
    Z^{(\epsilon)}_j\to S^1$ depending smoothly on $\epsilon\le
    \epsilon_0$ such that the fibers $\zeta_j^{-1}(t)$ have diameter at
    most $c_j\epsilon^{q_j}$ (we call these fibers the \emph{Milnor
      fibers} of $Z_j^{(\epsilon)}$).
  \end{enumerate}
\end{theorem}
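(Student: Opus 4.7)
The plan is to work from a good resolution $\pi\colon \widetilde X\to X$ with normal crossing exceptional divisor $E=\bigcup E_i$. The link $X^{(\epsilon)}$ is the plumbed $3$--manifold associated with $E$, hence a graph manifold. Any thick-thin decomposition induces a splitting of the plumbing graph into subgraphs, and minimality translates into extremality conditions on this splitting. For the bound $r\ge 1$: the germ $(X,0)$ has pure complex dimension $2$, so $T_0X$ has pure complex dimension $2$ (real dimension $4$), while by Definition~\ref{def:thin} each thin $Z_j$ has $\dim_{\mathbb R}T_0Z_j<4$. Hence $\bigcup_j T_0Z_j$ has real dimension less than $4$ and cannot contain $T_0X$, so the thin pieces cannot cover $(X,0)$ near the origin and at least one thick piece must exist.

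For property \eqref{it:thick}, the idea is that a metrically conical subgerm of $(X,0)$ forces weighted-homogeneous structure on the corresponding piece of the resolution. Concretely, I would refine $\pi$ to a ``Lipschitz-adapted'' resolution in which each $Y_i$ is a union of small tubular neighborhoods of a connected subgraph $\Gamma_i\subset E$, and then show that contracting $\Gamma_i$ produces a weighted homogeneous normal surface singularity. The link of such a subgraph is a Seifert fibered manifold, and the Seifert fibration matches the natural $S^1$-action that the metric conicality supplies on $Y_i^{(\epsilon)}$. Extending the fibration across boundary tori is then standard plumbing calculus.

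For property \eqref{it:thin}, since $X^{(\epsilon)}$ is itself a graph manifold and each $Z_j^{(\epsilon)}$ is a codimension-$0$ submanifold with torus boundary components, $Z_j^{(\epsilon)}$ inherits a graph manifold structure (cutting a graph manifold along incompressible tori preserves the graph-manifold property). To rule out solid tori, argue by contradiction: if some $Z_j^{(\epsilon)}$ were a solid torus, it could be absorbed into its unique adjacent thick piece $Y_i$ to yield a new thick-thin decomposition whose thin tangent cone is contained in the original one and whose number of thin pieces is strictly smaller, contradicting minimality. For property \eqref{it:fibration}, apply Proposition~\ref{prop:thin nh} to bound $Z_j\cap B_\epsilon$ inside a horn neighborhood of $T_0Z_j$ with exponent $q_j>1$. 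Choose a generic linear form $\ell\colon\C^N\to\C$; after restriction to $X$ and normalization, $\zeta_j^{(\epsilon)}:=(\ell/|\ell|)|_{Z_j^{(\epsilon)}}\colon Z_j^{(\epsilon)}\to S^1$ is a smooth fibration depending smoothly on $\epsilon\le\epsilon_0$, and its fibers, being transverse slices inside a $c_j\epsilon^{q_j}$-horn neighborhood of the lower-dimensional tangent cone, have diameter bounded by $c_j\epsilon^{q_j}$ (possibly after enlarging $c_j$).

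The main obstacle is property \eqref{it:thick}. Translating the metric condition ``bilipschitz equivalent to a metric cone on its link'' into the algebro-geometric condition ``the subgraph contracts to a weighted homogeneous singularity'' requires a careful choice of resolution, uniform control of how local branches of $\pi$ are distributed around exceptional components, and a matching of the thick pieces with maximal Seifert chunks in the JSJ decomposition of $X^{(\epsilon)}$. The remaining items reduce either to known facts about plumbed graph manifolds (property \eqref{it:thin}, first part), to the horn-neighborhood bound of Proposition~\ref{prop:thin nh} (property \eqref{it:fibration}), or to short minimality arguments (absence of solid tori, and $r\ge 1$).
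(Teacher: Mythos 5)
Your sketch has two genuine gaps, one of approach and one of substance.

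On property \eqref{it:thick}: the paper does not, and need not, prove anything like ``a metrically conical subgerm forces a weighted homogeneous singularity after contraction.'' That implication is false in general (a metrically conical germ need only have a Seifert fibered link, which is weaker), and ``contracting $\Gamma_i$'' doesn't quite make sense for a proper subgraph of the exceptional divisor of a given singularity. The actual proof is far more elementary: in Section~\ref{sec:thick-thin} the thick piece $Y_i$ is \emph{defined} as $\pi(N(\Gamma_\nu))$ where $\Gamma_\nu$ is a single \L-node $\nu$ plus attached bamboos. Such a subgraph is star-shaped, so the corresponding piece of the plumbing is a circle bundle over $E_\nu$ with finitely many Seifert-exceptional orbits coming from the bamboos, i.e.\ a Seifert fibered manifold. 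Seifertness is immediate from the construction; the hard analytic content (metric conicalness of $Y_i$) is carried out separately in Section~\ref{sec:thick} and is not what you need for \eqref{it:thick}. Your route would require establishing an algebro-geometric characterization of metrically conical subgerms that the paper never proves and that, as stated, is stronger than what is true.

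On property \eqref{it:fibration}: applying Proposition~\ref{prop:thin nh} only gives that $Z_j\cap B_\epsilon$ lies in a $c\epsilon^{q}$--horn around $T_0Z_j$, which by thinness has real dimension $\le 3$. A real $3$--dimensional tangent cone is compatible with slices $\{z_1=t\}\cap Z_j^{(\epsilon)}$ of diameter comparable to $\epsilon$, not $\epsilon^{q}$; the argument only closes if one knows $T_0Z_j$ is a complex line $L_j$, so that $L_j\cap\{z_1=t\}$ is a single point. The paper establishes exactly this in Proposition~\ref{prop:thin1}, and the proof is not a soft dimension count: it uses the resolution and Lemma~\ref{le:1} (for every vertex $\nu$ of the Tjurina component one can choose generators $h_2,\dots,h_m$ of $\frak m_{X,0}$ with $m_\nu(h_i)>m_\nu(z_1)$) to produce the line $L_j$ and the exponent $q_j$ explicitly. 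Your proposal is also using the wrong map: $\ell=(z_1,z_2)$ has target $\C^2$, so $\ell/|\ell|$ does not land in $S^1$; the fibration in the theorem is $z_1$ (equivalently the first component of $\ell$) restricted to $Z_j^{(\epsilon)}\subset\{|z_1|=\epsilon\}$, whose local triviality comes from the Milnor--L\^e fibration for $z_1$.

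Smaller points: the $r\ge1$ dimension count is fine and is in the same spirit as the paper. For \eqref{it:thin}, the graph-manifold claim follows immediately from the plumbing picture since $Z_j^{(\epsilon)}=\pi(\Nn(\Gamma_j))\cap S_\epsilon$ is a plumbed piece; the paper's construction also excludes solid tori by fiat, since thin pieces are built from Tjurina components which are not bamboos and hence contain a node, so the link cannot be a solid torus. Your minimality/absorption argument is plausible in spirit, but the paper itself warns (in the Introduction) that absorbing ``trivial'' solid-torus pieces into the thick part ``takes some effort,'' so citing it as a one-line reduction understates the work involved.
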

The minimal thick-thin decomposition is constructed in Section
\ref{sec:thick-thin}. Its minimality and uniqueness are proved in
Section \ref{sec:uniqueness}.

We will take a resolution approach to construct the
thick-thin decomposition, but another way of constructing it
is as follows. Recall (see \cite{Sn, LT}) that a line $L$ tangent
to $X$ at $0$ is \emph{exceptional} if the limit at $0$ of tangent
planes to $X$ along a curve in $X$ tangent to $L$ at $0$ depends on
the choice of this curve. Just finitely many tangent lines to $X$ at
$0$ are exceptional. To obtain the thin part one intersects
$X\setminus\{0\}$ with a $q$-horn disk-bundle neighborhood of each
exceptional tangent line $L$ for $q>1$ sufficiently small and then
discards any ``trivial'' components of these intersections (those
whose closures are locally just cones on solid tori; such trivial
components arise also in our resolution approach, and showing that
they can be absorbed into the thick part takes some effort, see
section \ref{sec:thick}).

\smallskip In \cite{BFN1} a \emph{fast loop} is defined as a family of
closed curves in the links $X^{(\epsilon)}:=X\cap S_{\epsilon}$,
$0<\epsilon\le \epsilon_0$, depending continuously on $\epsilon$,
which are not homotopically trivial in $X^{(\epsilon)}$ but whose
lengths are proportional to $\epsilon^k$ for some $k>1$, and it is
shown that fast loops are obstructions to metric
conicalness\footnote{We later call these \emph{fast loops of the first
    kind}, since in section \ref{fast loops} we define a related
  concept of \emph{fast loop of the second kind} and show these also
  obstruct metric conicalness.}

In Theorem \ref{th:thin fast loops} we show
\begin{theorem*}[\ref{th:thin fast loops}]
  Each thin piece $Z_j$ contains fast loops. % of the first kind.
  In fact, each boundary component of its Milnor fiber gives a fast
  loop.% of the first kind.
\end{theorem*}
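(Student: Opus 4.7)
The plan is to verify that each boundary circle $\gamma$ of a Milnor fiber $F=(\zeta_j^{(\epsilon)})^{-1}(t)$ inside $Z_j^{(\epsilon)}$ satisfies the two requirements of a fast loop: that $\gamma$ has length $\le C\epsilon^q$ for some $q>1$ and all small $\epsilon$, and that $\gamma$ is not null-homotopic in $X^{(\epsilon)}$. The family of such curves varies continuously with $\epsilon$ because by Theorem \ref{th:main in intro}\,(\ref{it:fibration}) the Milnor fibrations $\zeta_j^{(\epsilon)}$ do.

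For the length bound, I would start from Theorem \ref{th:main in intro}\,(\ref{it:fibration}), which gives $\mathrm{diam}(F)\le c_j\epsilon^{q_j}$ with $q_j>1$. Since $\gamma\subset F$, the curve lies in a set of diameter $O(\epsilon^{q_j})$. To upgrade this to a length bound I would use semi-algebraicity of the entire family: for a semi-algebraic family of smooth closed curves with diameter $O(\epsilon^{q_j})$, the length is polynomially bounded in $\epsilon$ with exponent $>1$. Concretely, I would describe $\gamma$ as a fiber of the restricted fibration $\zeta_j|_T\colon T\to S^1$, where $T$ is the boundary torus of $Z_j^{(\epsilon)}$ carrying $\gamma$, and combine the fact that $T$ sits in a horn neighborhood of the lower-dimensional tangent cone of $Z_j$ (Proposition \ref{prop:thin nh}) with a direct computation of the induced metric on $T$ coming from the resolution to conclude $\mathrm{length}(\gamma)=O(\epsilon^q)$ for some $q>1$.

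For essentiality, the argument is topological. First, $\gamma$ is essential on $T$: otherwise it would bound a disk on $T$, forcing $\partial F$ to be trivial and hence $F$ to be a disk and $Z_j^{(\epsilon)}$ a solid torus, contradicting Theorem \ref{th:main in intro}\,(\ref{it:thin}). Second, $T$ is incompressible in $X^{(\epsilon)}$: the link of a normal surface singularity is a graph manifold, and the boundary tori of the thick-thin pieces can be arranged (after a small isotopy if necessary) to be among its JSJ tori, hence $\pi_1$-injective. Putting the two points together shows that $\gamma$ is essential in $X^{(\epsilon)}$.

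The main obstacle is the length bound, since passing from a diameter bound on $F$ to a length bound on $\gamma$ is not purely topological and requires invoking tameness. I expect the cleanest implementation to be via the resolution approach sketched after Theorem \ref{th:main in intro}: near $\partial Z_j$ the surface is modeled, in local coordinates at a resolution, on the total space of a map of the form $u^a v^b=t$, with a warped-product metric on $T$ whose two circumferences scale as distinct powers of $\epsilon$. Reading off the length of the $(p,q)$-curve that represents $\gamma$ on this torus yields the required polynomial bound, and the same local model also makes the two essentiality assertions transparent.
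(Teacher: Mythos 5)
Your argument for essentiality breaks down at the second step. You claim that the boundary torus $T$ of $Z_j^{(\epsilon)}$ is incompressible in $X^{(\epsilon)}$ because it can be isotoped to a JSJ torus of the graph manifold link. This is false in general: the thick--thin decomposition is not a JSJ decomposition, and a thick zone $Y_i^{(\epsilon)}$ can be a solid torus. The simplest example is $A_{2n+1}$ (see Table \ref{tab:simple}): the resolution graph is a bamboo with the two end vertices as \L-nodes and no attached bamboos, so each thick zone is a solid torus and the boundary torus $T$ of the unique thin zone \emph{is} compressible. The same happens for any lens space link, and these are precisely the cases where essentiality is hardest to establish.

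The paper does not sidestep this difficulty; it is the heart of the argument. In the proof of Proposition \ref{prop:fast boundaries} one assumes $\gamma$ is null-homotopic in $X^{(\epsilon_0)}$, deduces that $T$ is compressible, and uses the classification of compressible boundary tori of negative-definite plumbings to conclude that $T$ bounds a solid torus $A$. Lemma \ref{le:gamma is positive multiple of core} shows $\gamma$ is nontrivial in $\pi_1(A)$ by computing intersection numbers with the Milnor fiber of $z_1$ via the open-book structure. When $\pi_1(X^{(\epsilon_0)})$ is infinite, Lemma \ref{le:meridian curves are nontrivial in pi_1} shows the core of $A$ has infinite order, giving a contradiction. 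When $\pi_1(X^{(\epsilon_0)})$ is finite (the link is a lens space or one of the small spherical Seifert manifolds), the paper does an explicit case analysis using Lemma \ref{le: core criterion}. None of this is captured by your proposal, which would only apply to singularities all of whose thick pieces have incompressible boundary. (On the length side, the paper works directly with the contracting vector field of Proposition \ref{prop:thin1}(3) rather than upgrading a diameter bound by semi-algebraicity; your sketch there could probably be made to work, but it is not the main issue.)
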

\begin{corollary}\label{cor:conical} The following are equivalent, and
  each implies that the link of $(X,0)$ is Seifert fibered:
  \begin{enumerate}
  \item   $(X,0)$ is metrically conical;
  \item $(X,0)$ has no fast loops;% of the first kind; 
\item $(X,0)$ has no thin piece (so it consists of a single thick piece).
  \end{enumerate}
\end{corollary}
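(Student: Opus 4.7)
The plan is to prove the corollary by a three-way cycle of implications, then note the Seifert fibered conclusion as an immediate consequence.

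First, I would show $(3)\Rightarrow(1)$. If $(X,0)$ has only a single thick piece $Y_1$, then by Definition \ref{def:thick} the germ $X\cap B_{\epsilon_0}$ is a nested union of metrically conical subsets $Y_\epsilon$ with a uniform bilipschitz constant $K$. The compatibility condition in Definition \ref{def:thick}(2) guarantees that these conical structures assemble into a single bilipschitz equivalence between $X\cap B_{\epsilon_0}$ and the straight cone on $X^{(\epsilon_0)}$, so $(X,0)$ is metrically conical. (In this case the ``difference'' appearing in Definition \ref{def:thick}(2) is empty because there is no thin piece with which to interface, so $Y_{\epsilon_2}\cap B_{\epsilon_1}=Y_{\epsilon_1}$.)

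Next, $(1)\Rightarrow(2)$ is the original observation of \cite{BFN1}: on a straight metric cone every essential loop at height $\epsilon$ has length at least a linear function of $\epsilon$, because radial rescaling is an isometry up to the conical factor, so a family of loops of length $O(\epsilon^k)$ with $k>1$ would shrink to a point at the origin and hence be nullhomotopic in $X^{(\epsilon)}$ for small $\epsilon$. A bilipschitz homeomorphism distorts lengths only by a bounded factor, so metric conicalness of $(X,0)$ prohibits fast loops.

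The implication $(2)\Rightarrow(3)$ is the contrapositive of Theorem \ref{th:thin has fast loops}: if the minimal thick-thin decomposition has at least one thin piece $Z_j$, then the boundary components of the Milnor fiber of $Z_j$ provide fast loops in $X^{(\epsilon)}$. (One must also recall from Theorem \ref{th:main in intro} that these boundary loops are essential in $X^{(\epsilon)}$, which in particular uses that $Z_j^{(\epsilon)}$ is not a solid torus.) This is the only step that relies on a non-trivial earlier result; the main content of the whole corollary is really packaged into Theorem \ref{th:thin has fast loops}, so I do not expect any serious obstacle here.

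Finally, for the Seifert fibered conclusion: under (3), the link $X^{(\epsilon)}$ coincides with the single thick zone $Y_1^{(\epsilon)}$, which is Seifert fibered by Theorem \ref{th:main in intro}\eqref{it:thick}. Since (1) and (2) have just been shown equivalent to (3), each of them implies the Seifert fibered conclusion as well. \qed
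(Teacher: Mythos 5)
Your proof is correct and follows essentially the same route the paper takes: the cycle $(1)\Rightarrow(2)$ via Proposition~\ref{prop:fast loops}, $(2)\Rightarrow(3)$ as the contrapositive of Theorem~\ref{th:thin has fast loops}, and the Seifert fibered conclusion from Theorem~\ref{th:main in intro}\eqref{it:thick}.

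One small remark on your handling of $(3)\Rightarrow(1)$: the paper establishes this direction through its explicit construction --- Lemma~\ref{le:2} shows $\pi(\Nn(\nu))$ is metrically conical and that attached bamboos can be absorbed, and Corollary~\ref{cor:conical1} states only that Proposition~\ref{prop:fast loops} together with Theorem~\ref{th:thin has fast loops} gives the forward direction. You instead argue $(3)\Rightarrow(1)$ directly from Definition~\ref{def:thick}: when the single thick piece is all of $X$, the boundary of each link $Y_{\epsilon_1}\cap S_{\epsilon_1}=X\cap S_{\epsilon_1}$ is empty, so the ``difference'' in Definition~\ref{def:thick}(2) vanishes, which forces $Y_{\epsilon_0}\cap S_\epsilon = X\cap S_\epsilon$ for all $\epsilon\le\epsilon_0$ and hence $Y_{\epsilon_0}=X\cap B_{\epsilon_0}$ is the already-given metrically conical set. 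This is a valid and arguably more self-contained argument that works at the level of the abstract definition rather than the resolution construction; it buys you a proof that does not depend on Section~\ref{sec:thick}.

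A small imprecision worth fixing: you attribute the essentialness of the Milnor fiber boundary loops to ``Theorem~\ref{th:main in intro} ... which in particular uses that $Z_j^{(\epsilon)}$ is not a solid torus.'' In fact Theorem~\ref{th:main in intro} only records that the thin zone is not a solid torus; the essentialness of the boundary curves of $F_\nu$ (a fortiori of $F_j$) is the content of Proposition~\ref{prop:fast boundaries} and is already packaged into the assertion that they ``give fast loops'' in Theorem~\ref{th:thin has fast loops}, so the parenthetical is unnecessary and slightly mislocates where the work happens. The logic of your proof is nevertheless sound since you rely on Theorem~\ref{th:thin has fast loops} as a black box.
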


\medskip\noindent{\bf Bilipschitz classification}. We will give a
complete classification of the geometry of $(X,0)$ up to bilipschitz
equivalence, based on a refinement of the thick-thin decomposition. We
will describe this refinement in terms of the decomposition of the
link $X^{(\epsilon)}$.

We first refine the decomposition
$X^{(\epsilon)}=\bigcup_{i=1}^rY^{(\epsilon)}_i\cup\bigcup_{j=1}^sZ^{(\epsilon)}_j$
by decomposing each thin zone $Z_j^{(\epsilon)}$ into its JSJ
decomposition (minimal decomposition into Seifert fibered manifolds
glued along their boundaries \cite{JSJ,NS}), while leaving the thick
zones $Y_i^{(\epsilon)}$ as they are.  We then thicken some of the
gluing tori of this refined decomposition to collars $T^2\times I$, to
add some extra ``annular'' pieces (the choice where to do this is
described in Section \ref{sec:decomp}).  At this point we have
$X^{(\epsilon)}$ glued together from various Seifert fibered manifolds
(in general not the minimal such decomposition).

Let $\Gamma_0$ be the decomposition graph for this, with a vertex for
each piece and edge for each gluing torus, so we can write this
decomposition as  
\begin{equation}
   \label{eq:link-decomposition}
   X^{(\epsilon)}=\bigcup_{\nu\in V(\Gamma_0)} M_\nu^{(\epsilon)} \,,
\end{equation}
where $V(\Gamma_0)$  is the vertex set of $\Gamma_0$.
\begin{theorem}[Classification Theorem]\label{th:classification}
  The bilipschitz geometry of $(X,0)$ determines and is uniquely
  determined by the following data:
  \begin{enumerate}
  \item\label{it:1.9.1} The decomposition of $X^{(\epsilon)}$ into
    Seifert fibered manifolds as described above, refining the
    thick-thin decomposition;
  \item\label{it:1.9.2} for each thin zone $Z_j^{(\epsilon)}$, the
    homotopy class of the foliation by fibers of the fibration
    $\zeta_j^{(\epsilon)}\colon Z_j^{(\epsilon)}\to S^1$ (see Theorem
    \ref{th:main in intro}\,\eqref{it:fibration});
  \item\label{it:1.9.3} for each vertex $\nu\in V(\Gamma_0)$, a
    rational weight $q_\nu\ge 1$ with $q_\nu=1$ if and only if
    $M_\nu^{(\epsilon)}$ is a $Y_i^{(\epsilon)}$ (i.e., a thick zone)
    and with $q_\nu\ne q_{\nu'}$ if $\nu$ and $\nu'$ are adjacent
    vertices.
  \end{enumerate}
\end{theorem}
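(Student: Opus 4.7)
The theorem has two directions: the data (1)--(3) are bilipschitz invariants of $(X,0)$, and conversely they determine $(X,0)$ up to bilipschitz equivalence. I would prove these directions separately, using Theorem \ref{th:uniqueness} and Theorem \ref{th:main in intro} as scaffolding.

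\textbf{Invariance direction.} By Theorem \ref{th:uniqueness}, the minimal thick-thin decomposition is unique up to a horn-homeomorphism, and in particular it is preserved by any bilipschitz self-homeomorphism of $(X,0)$. The JSJ refinement of each thin zone is a purely topological invariant of the link, and the annular collars added in Section \ref{sec:decomp} are inserted according to combinatorial data of the JSJ graph, so the full refined decomposition in (1) is intrinsic up to bilipschitz equivalence. The exponent $q_\nu$ in (3) I would characterize as the intrinsic ``shrinking rate'' of the piece $M_\nu^{(\epsilon)}$, i.e., the supremum of $q$ such that $M_\nu^{(\epsilon)}$ sits in a $c\epsilon^q$--neighborhood of a lower-dimensional subset (with $q_\nu=1$ for thick pieces by conicalness, and $q_\nu>1$ for thin pieces by Proposition \ref{prop:thin nh}); such exponents are manifestly bilipschitz invariants, and the constraint $q_\nu\ne q_{\nu'}$ for adjacent vertices is forced by the minimality of the decomposition, since otherwise two adjacent pieces would merge. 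The homotopy class in (2) is characterized by the geometric property that fibers have diameter bounded by $c_j\epsilon^{q_j}$ (see Theorem \ref{th:main in intro}\,\eqref{it:fibration}); this property is plainly bilipschitz invariant and pins down the fibration up to homotopy, because any two fibrations $Z_j^{(\epsilon)}\to S^1$ whose fibers shrink faster than linearly must have homotopic fibers in the graph-manifold $Z_j^{(\epsilon)}$.

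\textbf{Realization direction.} Given abstract data $(\Gamma_0,\{q_\nu\},\{\zeta_j\})$ satisfying (1)--(3), I would build a \emph{model metric germ} vertex by vertex. Each piece $M_\nu$ is equipped with a ``$q_\nu$--horn metric'' over its Seifert base: the Seifert fibers scale like $\epsilon^{q_\nu}$, the base is conical, and the gluing of neighboring pieces along a boundary torus respects the two incoming Seifert slopes as well as, for any thin piece, the Milnor-fiber boundary slope specified by $\zeta_j^{(\epsilon)}$. The model is well-defined up to bilipschitz equivalence by the data. One then shows every $(X,0)$ realizing the data is bilipschitz equivalent to this model: the internal geometry of each $M_\nu^{(\epsilon)}$ in $(X,0)$ is governed by $q_\nu$ by definition of the rate, and the gluings are rigid given all the slope data.

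\textbf{Main obstacle.} The hard part is the realization direction, and in particular the matching along boundary tori. Each such torus carries up to three distinguished slopes—the two Seifert fibers of the adjacent pieces, and (if a thin piece is involved) the boundary of a Milnor fiber; one must prove that the bilipschitz class of a collar of the torus is rigidly determined by these slopes together with the rates $q_\nu,q_{\nu'}$ on the two sides. This reduces to the model problem of flat torus metrics whose two principal directions scale as $\epsilon^{q_\nu}$ and $\epsilon^{q_{\nu'}}$ respectively, and the key input is the condition $q_\nu\ne q_{\nu'}$ which prevents the collar geometry from being ambiguous. Propagating this local rigidity into a global bilipschitz equivalence over $\Gamma_0$ is a substantial bookkeeping exercise; I would proceed by induction on the number of vertices of $\Gamma_0$, taking Corollary \ref{cor:conical} (a single thick piece) as the base case and at each step attaching one new piece along a single torus, using the slope-matching result to extend the bilipschitz equivalence.
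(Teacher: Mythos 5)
Your outline correctly divides the theorem into invariance and realization, and the model-building strategy you sketch is indeed what the paper does: Section \ref{sec:model} constructs $\model X$ from the pieces $A(q,q')$, $B(F,\phi,q)$, $CM$ with Hsiang--Pati/Nagase type metrics. But the proposal has two genuine gaps where you have essentially asserted what needs to be proved.

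First, the realization direction. You write that ``the internal geometry of each $M_\nu^{(\epsilon)}$ in $(X,0)$ is governed by $q_\nu$ by definition of the rate,'' and then propose an induction over vertices of $\Gamma_0$ to propagate the gluings. That sentence \emph{is} the theorem, not a step toward it: nothing in the abstract data tells you that a Seifert piece of a thin zone sitting in an actual complex surface carries a $q_\nu$-horn metric up to bilipschitz, nor that the $q_\nu$'s exist at all. The paper's real labor is proving exactly this, and it does so in Sections \ref{sec:carrousel}--\ref{sec:carrousel1} by a route your proposal does not touch: take the generic plane projection $\ell\colon X\to\C^2$, build a \emph{carrousel decomposition} of the target cone $V$ driven by the Puiseux expansions of the discriminant curve $\Delta_j$, lift it to the thin piece via $\ell$ (Lemma \ref{lifting pieces}, which relies on the Very Thin Zone Lemma \ref{le:very thin} and Corollaries \ref{cor:thin discriminant}--\ref{cor:flat thin}), obtaining a non-minimal decomposition of $W_j$ into the model pieces, and then amalgamate via the local simplification rules of Lemma \ref{le:rules}. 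The weights $q_\nu$ emerge as Puiseux exponents of $\Delta_j$. There is no induction on vertices, and no ``torus slope rigidity'' argument; the rigidity lives in the carrousel, not in the gluing. Your proposal as written would need a substitute for this entire apparatus.

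Second, the invariance direction. Your characterization of $q_\nu$ as the ``supremum of $q$ such that $M_\nu^{(\epsilon)}$ sits in a $c\epsilon^q$-neighborhood of a lower-dimensional subset'' does not distinguish between the several Seifert pieces of a single thin zone: they all lie in a horn neighborhood of the same exceptional tangent line $L_j$ whose exponent is $\min_\nu q_\nu$, not the individual $q_\nu$'s, so this characterization collapses to a single number per thin zone. The paper's actual characterization is sharper (Lemma \ref{le:nonparallel}): $q_\nu$ is the maximal rate of a \emph{fast loop} of the first kind that lies in the fiber $F_\nu$ of the piece and is not boundary-parallel, and the special annular pieces and their rates are likewise detected by fast-loop rates (Lemma \ref{le:bilrates}) rather than by ``combinatorial data of the JSJ graph'' — in fact where the special annular pieces sit depends on where the polar curve meets strings of the resolution graph (Section \ref{sec:decomp}), which is not JSJ-combinatorial, so your claim that data (1) is determined by the JSJ graph alone is also incorrect.
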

In item \eqref{it:1.9.2} we ask for the foliation by fibers rather
than the fibration itself since we do not want to distinguish
fibrations $\zeta\colon Z\to S^1$ which become equivalent after
composing each with a covering maps $S^1\to S^1$. Note that item
\eqref{it:1.9.2} describes discrete data, since the foliation is
determined up to homotopy by a primitive element of
$H^1(Z_j^{(\epsilon)};\Z)$ up to sign.

The data of the above theorem can also be conveniently encoded by
adding the $q_\nu$'s as weights on a suitable decorated resolution
graph. We do this in Section \ref{sec:examples}, where we compute
various examples.

The proof of Theorem \ref{th:classification} is in terms of a
canonical ``bilipschitz model''
\begin{equation}
   \label{eq:model-decomposition}
   \widehat X=\bigcup_{\nu\in V(\Gamma_0)}  \widehat M_\nu\cup 
\bigcup _{\sigma\in  E(\Gamma_0)}  \widehat A_\sigma\,,
 \end{equation}
with $\widehat X\cong X\cap B_\epsilon$ (bilipschitz) and where each 
$\widehat A_\sigma$ is a collar (cone on a toral annulus $T^2\times I$)
while each $\widehat M_\nu$ is homeomorphic to the cone on
$M_\nu^{(\epsilon)}$. The  pieces carry Riemannian metrics
determined by the $q_\nu$'s and the foliation data of the theorem;
these metrics are global versions of the local metrics used by Hsiang
and Pati \cite{hsiang-pati} and Nagase \cite{nagase}. On a piece
$\widehat M_\nu$ the metric is what Hsiang and Pati call a ``Cheeger
type metric'' (locally of the form
$dr^2+r^2d\theta^2+r^{2q_\nu}(dx^2+dy^2)$; see Definitions
\ref{def:p1}, \ref{def:p3}).  On a piece $\widehat A_\sigma$ it has
a Nagase type metric as described in Nagase's correction to
\cite{hsiang-pati} (see Definition \ref{def:p2}).

Mostovski's work mentioned earlier is based on a construction of
Lipschitz trivial stratifications. Our approach is different in that
we decompose the germ $(X,0)$ using the carrousel theory of D. T. L\^e
(\cite{Le}, see also \cite{le-michel-weber}) applied to the
discriminant curve of a generic plane projection of the
surface. However, our work has some similarities with Mostovski's
(loc.~cit., see also \cite{M1}) in the sense that the geometry near
the polar curves also plays an important role, in particular the
subgerms where the family of polar curves accumulates while one varies
the direction of projections (Propositions \ref{le:very thin} and
\ref{prop:polar wedge}).

A thick-thin decomposition exists also for
  higher-dimensional germs, and we conjecture with Alberto Verjovsky
  that it can be made canonical.  It is the rigidity of topology in
  dimension 3, linked to the nontriviality of fundamental groups in
  this dimension, which enables us to get strong results for surfaces.
  The less rigid topology in higher dimensions makes it is harder to
  pin down the ``trivial'' parts mentioned earlier which can be
  absorbed into the thick zones, and there are similar issues in
  determining boundaries between the pieces in a full bilipschitz
  classification. 

\smallskip\noindent{\bf Acknowledgments.}  We are very
  grateful to the referee for insightful comments which corrected an
  error in the paper and improved it in other ways, and to
Adam Parusi\'nski, Jawad Snoussi, Don O'Shea,
Bernard Teissier, Guillaume Valette, and Alberto
Verjovsky for useful conversations. Neumann was supported by NSF grants
DMS-0905770  and DMS-1206760. Birbrair was supported by CNPq grants 201056/2010-0 and
300575/2010-6. Pichon was supported by the ANR project SUSI 12-JS01-0002-01.  We are also
grateful for the hospitality/support of the following institutions:
Jagiellonian University (B), Columbia University (B,P), Institut de
Math\'ematiques de Luminy, Universit\'e d'Aix-Marseille, Instituto do
Mil\'enio (N), IAS Princeton, CIRM petit groupe de travail (B,N,P),
Universidade Federal de Ceara, CRM Montr\'eal (N,P).

\section{Construction  of the thick-thin
  decomposition}\label{sec:thick-thin}

Let $(X,0)\subset (\C^n,0)$ be a normal surface germ.  In this
section, we explicitly describe the thick-thin decomposition for a
normal complex surface germ $(X,0)$ in terms of a suitably adapted
resolution of $(X,0)$.

Let $\pi\colon(\widetilde X,E)\to (X,0)$ be the minimal resolution with
the following properties:
\begin{enumerate}
\item\label{r1} It is a good resolution, i.e., the
exceptional divisors are smooth and meet transversely, at most two at
a time.
\item\label{r2} It has no basepoints for a general linear system of
  hyperplane sections, i.e., $\pi$ factors through the
  normalized blow-up of the origin.  An exceptional curve intersecting
  the strict transforms of the generic members of a general linear
  system will be called an \emph{\L-curve}.
\item\label{r3} No two  \L-curves intersect.  
\end{enumerate}
 This is achieved by starting with a minimal good resolution, then
blowing up to resolve any basepoints of a general system  of hyperplane
sections, and finally blowing up any intersection point between  \L-curves.

Let $\Gamma$ be the resolution graph of the above resolution. A vertex
of $\Gamma$ is called a \emph{node} if it has valency  $\ge 3$ or
represents a curve of genus $>0$ or represents an \L-curve. If a
node represents an \L-curve it is called an \L-node, otherwise a
\T-node. By the previous paragraph, \L-nodes cannot be adjacent to
each other.

The subgraphs of $\Gamma$ resulting by removing the \L-nodes and adjacent
edges from $\Gamma$ are called the \emph{Tjurina components} of
$\Gamma$ (following \cite[Definition III.3.1]{spivakovsky}), so \T-nodes are
precisely the nodes of $\Gamma$ that are in Tjurina components.

A \emph{string} is a
connected subgraph of $\Gamma$ containing no nodes.
% except perhaps at its extremities. 
A \emph{bamboo} is a string ending in a vertex of
valence $1$.

For each exceptional curve $E_\nu$ in $E$ let $N(E_\nu)$ be a small
closed tubular neighborhood.  For any subgraph $\Gamma'$ of $\Gamma$
define (see Fig.~\ref{fig:N}):
$$N(\Gamma'):= \bigcup_{\nu\in\Gamma'}N(E_\nu)\quad\text{and}\quad
\Nn(\Gamma'):= 
\overline{N(\Gamma)\setminus \bigcup_{\nu\notin \Gamma'}N(E_\nu)}\,.$$
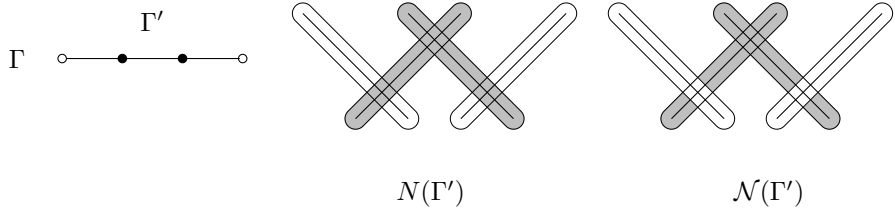
\begin{figure}[ht]\label{fig:N}
\centering
\begin{tikzpicture}
  \node[ ](a)at (0.8,1.3){$\Gamma'$}; \node[ ](b)at
  (-1,0.8){$\Gamma$}; \node[ ](c)at (4.5,-1){$N(\Gamma')$}; \node[
  ](d)at (9,-1){${\cal N}(\Gamma')$};\draw[
  scale=0.8,xshift=-0.5cm,yshift=1cm,thin] (0+2pt,0)--(2.95,0);\draw
  [ scale=0.8,xshift=-0.5cm,yshift=1cm ](0,0)circle(2pt);
 \draw[scale=0.8, xshift=-0.5cm,yshift=1cm,fill] (1,0)circle(2pt);
 \draw[ scale=0.8,xshift=-0.5cm,yshift=1cm, fill] (2,0)circle(2pt);
 \draw[scale=0.8,xshift=-0.5cm,yshift=1cm, thin](3,0)circle(2pt);
 \draw[scale=0.7,xshift=5cm,fill, lightgray](2,2)+(-45:0.2) arc
  (-45:135:0.2)--++(-2,-2) arc (135:315:0.2)--++(2,2);
 \draw[scale=0.7,xshift=7cm, fill, lightgray](1,0)+(-135:0.2) arc
  (-135:45:0.2)--++(-2,2) arc (45:225:0.2)--++(2,-2);
 \draw[scale=0.7,xshift=5cm,very thin](2,2)+(-45:0.2) arc
  (-45:135:0.2)--++(-2,-2) arc (135:315:0.2)--++(2,2);
 \draw[scale=0.7,xshift=7cm,very thin](2,2)+(-45:0.2) arc
  (-45:135:0.2)--++(-2,-2) arc (135:315:0.2)--++(2,2);
 \draw[scale=0.7,xshift=5cm,very thin](1,0)+(-135:0.2) arc
  (-135:45:0.2)--++(-2,2) arc (45:225:0.2)--++(2,-2);
 \draw[scale=0.7,xshift=7cm, very thin](1,0)+(-135:0.2) arc
  (-135:45:0.2)--++(-2,2) arc (45:225:0.2)--++(2,-2);
 \draw[scale=0.7,xshift=5cm](0,0)--(2,2);
 \draw[scale=0.7,xshift=5cm](-1,2)--(1,0);
 \draw[scale=0.7,xshift=5cm](1,2)--(3,0);
 \draw[scale=0.7,xshift=5cm](2,0)--(4,2);
 \draw[scale=0.7,xshift=11cm,fill, lightgray](2,2)+(-45:0.2) arc
  (-45:135:0.2)--++(-2,-2) arc (135:315:0.2)--++(2,2);
 \draw[scale=0.7,xshift=13cm, fill, lightgray](1,0)+(-135:0.2) arc
  (-135:45:0.2)--++(-2,2) arc (45:225:0.2)--++(2,-2);
 \draw[scale=0.7,xshift=11cm,fill,
  white](0.5,0.5)--++(-45:0.2)--++(45:0.2)
  --++(135:0.4)--++(225:0.4)--++(-45:0.4)--++(45:0.2);
 \draw[scale=0.7,xshift=13cm,fill,
  white](0.5,0.5)--++(-45:0.2)--++(45:0.2)
  --++(135:0.4)--++(225:0.4)--++(-45:0.4)--++(45:0.2);
 \draw[scale=0.7,xshift=11cm,very thin](2,2)+(-45:0.2) arc
  (-45:135:0.2)--++(-2,-2) arc (135:315:0.2)--++(2,2);
 \draw[scale=0.7,xshift=13cm,very thin](2,2)+(-45:0.2) arc
  (-45:135:0.2)--++(-2,-2) arc (135:315:0.2)--++(2,2);
 \draw[scale=0.7,xshift=11cm,very thin](1,0)+(-135:0.2) arc
  (-135:45:0.2)--++(-2,2) arc (45:225:0.2)--++(2,-2);
 \draw[scale=0.7,xshift=13cm, very thin](1,0)+(-135:0.2) arc
  (-135:45:0.2)--++(-2,2) arc (45:225:0.2)--++(2,-2);
 \draw[scale=0.7,xshift=11cm](0,0)--(2,2);
 \draw[scale=0.7,xshift=11cm](-1,2)--(1,0);
 \draw[scale=0.7,xshift=11cm](1,2)--(3,0);
 \draw[scale=0.7,xshift=11cm](2,0)--(4,2);
\end{tikzpicture} 
\caption{$N(\Gamma')$ and $\Nn(\Gamma')$ for the $A_4$ singularity}
\end{figure}

In the Introduction we used standard $\epsilon$-balls to state our
results, but in practice it is often more convenient to work with a
different family of Milnor balls.  For example, one can use, as in
Milnor \cite{milnor}, the ball of radius $\epsilon$ at the origin, or
the balls with corners introduced by K\"ahler \cite{kaehler}, Durfee
\cite{durfee} and others. In our proofs it will be convenient to use
balls with corners, but it is a technicality to deduce the results for
round Milnor balls. We will define the specific family of balls we use
in Section \ref{sec:balls}. We denote it again by $B_{\epsilon}$,
$0<\epsilon\le \epsilon_0$ and put $S_\epsilon:=\partial B_\epsilon$.

\begin{definition}[Thick-thin decomposition]
  Assume $\epsilon_0$ is sufficiently small that $\pi^{-1}(X\cap
  B_{\epsilon_0})$ is included in $N(\Gamma)$. Let $\Gamma_1 ,
  \ldots, \Gamma_s$ denote the  Tjurina components of $\Gamma$ which are not
  bamboos, and by $\Gamma'_1,\ldots, \Gamma'_r$ the maximal connected
  subgraphs in $\Gamma \setminus \bigcup_{j=1}^s \Gamma_j$.
 
  For each each $i=1,\ldots,r$, define
$$Y_i:=\pi(N(\Gamma'_j))\cap  B_{\epsilon_0},$$
and for each $j=1,\ldots,s$, define
$$Z_j:=\pi(\Nn(\Gamma_j))\cap  B_{\epsilon_0}.$$
 
 Notice that   each $\Gamma'_i$ consists
of an \L-node  and any attached bamboos.  So the $Y_i$ are in
one-one correspondence with the \L-nodes.

The $Y_i$ are the \emph{thick pieces} and the $Z_j$ are the \emph{thin
  pieces}.
\end{definition} 

By construction, the decomposition $(X,0)=\bigcup(
Z_j,0)\cup\bigcup (Y_i,0)$ satisfies items \eqref{it:1} and \eqref{it:2}
of Definition \ref{def:thick-thin} and items \eqref{it:thick}
and \eqref{it:thin} of Theorem \ref{th:main in intro}.  Item
\eqref{it:fibration} of Theorem \ref{th:main in intro} and the
thinness of the $Z_j$ are proved in Section \ref{sec:thin}. The
thickness of $Y_i$ is proved in Section \ref{sec:thick}.

\section{Polar curves}\label{sec:polars}
Let $(X,0)\subset (\C^n,0)$ be a normal surface germ.  In this
section, we prove two independent results on polar curves of linear
projections $X \to \C^2$ which will be used in the sequel. We first
need to introduce some classical material.

Let $\cal D$ be a $(n-2)$-plane in $\C^n$ and let $\ell_{\cal D}
\colon \C^n \to \C^2$ be the linear projection $\C^n \to \C^2$ with
kernel $\cal D$. We restrict ourselves to those $\cal D$ in the
Grassmanian $\grassman(n-2,\C^n)$ such that the restriction
$\ell_{\cal D}{\mid}\colon(X,0)\to(\C^2,0)$ is finite.
% and we consider a small representative of the germ $(X,0)$.  
The \emph{polar curve}
$\Pi_{\cal D}$ of $(X,0)$ for the direction $\cal D$ is the closure in
$(X,0)$ of the singular locus of the restriction of $\ell_{\cal D} $
to $X \setminus \{0\}$. The \emph{discriminant curve} $\Delta_{\cal D}
\subset (\C^2,0)$ is the image $\ell_{\cal D}(\Pi_{\cal D})$ of the polar
curve $\Pi_{\cal D}$.
 
There exists an open dense subset $\Omega \subset \grassman(n-2,\C^n)$
such that the germs of curves $(\Pi_{\cal D},0), \cal D \in \Omega$
are equisingular in terms of strong simultaneous resolution and such
that the discriminant curves $\Delta_{\cal D} = \ell_{\cal
  D}(\Pi_{\cal D})$ are reduced and no tangent line to $\Pi_{\cal D}$
at $0$ is contained in $\cal D$ (\cite[(2.2.2)]{LT0} and
\cite[V. (1.2.2)]{teissier}).

The condition $\Delta_{\cal D}$ reduced means that any
$p\in\Delta_{\cal D}\setminus\{0\}$ has a neighborhood $U$ in $\C^2$
such that one component of $(\ell_{\cal D}|_X)^{-1}(U)$ maps by a
two-fold branched cover to $U$ and the other components map
bijectively.
 
\begin{definition} 
The projection $\ell_{\cal D} \colon \C^n \to \C^2$
  is \emph{generic} for $(X,0)$ if $\cal D \in \Omega$.
\end{definition}

Let $\lambda \colon X\setminus\{0\} \to \grassman(2,\C^n)$ be the
  map which maps $x \in X\setminus\{0\}$ to the tangent plane
  $T_xX$. The closure $\check X$ of the graph of $\lambda$ in $X
  \times \grassman(2,\C^n)$ is a reduced analytic surface. By
  definition, the \emph{Nash modification} of $(X,0)$ is the induced
  morphism $\nu\colon \check X \to X$.

\begin{lemma}[{\cite[Part III,  Theorem
    1.2]{spivakovsky}}, {\cite[Section 2]{GS}}]\label{le:nash} 
A resolution of $(X,0)$ factors through Nash modification if and
  only if it has no base points for the family of polar curves.\qed
\end{lemma}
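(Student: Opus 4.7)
The plan is to translate the base-point condition into a regularity statement for the Gauss map $\lambda\colon X\setminus\{0\}\to\grassman(2,\C^n)$. The key reformulation is
\[
\Pi_{\cal D}=\overline{\lambda^{-1}(H_{\cal D})},\qquad
H_{\cal D}:=\{L\in\grassman(2,\C^n):L\cap\cal D\neq\{0\}\},
\]
which holds because $x\in X\setminus\{0\}$ is critical for $\ell_{\cal D}|_X$ exactly when the $(n-2)$-plane $\cal D$ fails to be transverse to the 2-plane $T_xX$. A direct computation in Pl\"ucker coordinates shows that $H_{\cal D}$ is a hyperplane section of $\grassman(2,\C^n)$ in its Pl\"ucker embedding, hence the divisor of an ample line bundle.

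Since the Nash modification $\check X\subset X\times\grassman(2,\C^n)$ is by construction the closure of the graph of $\lambda$, and $\tilde X$ is smooth, $\pi$ factors through $\nu\colon\check X\to X$ if and only if the composition $\tilde\lambda:=\lambda\circ\pi$, which is a priori defined only on $\tilde X\setminus E$, extends to a morphism $\tilde\lambda\colon\tilde X\to\grassman(2,\C^n)$. So the lemma reduces to the assertion that \emph{$\tilde\lambda$ extends across every point of $E$ if and only if the family $\{\overline{\tilde\lambda^{-1}(H_{\cal D})}\}_{\cal D\in\Omega}$ has no base point on $E$}.

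One direction is immediate: if $\tilde\lambda$ is regular at $p\in E$ with value $L_p$, then membership $p\in\tilde\Pi_{\cal D}$ forces $L_p\in H_{\cal D}$, and the set of such $\cal D$ is a proper closed subset of $\grassman(n-2,\C^n)$, so $p$ is not a base point. For the converse, suppose $\tilde\lambda$ has an indeterminacy point $p_0\in E$. Take a further modification $\sigma\colon\tilde X'\to\tilde X$ resolving the indeterminacy, with extension $\mu\colon\tilde X'\to\grassman(2,\C^n)$, and set $\Sigma_{p_0}:=\mu(\sigma^{-1}(p_0))$. By the definition of an indeterminacy point, $\Sigma_{p_0}$ is a positive-dimensional complete subvariety of $\grassman(2,\C^n)$. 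By ampleness of the Pl\"ucker class, $H_{\cal D}\cap\Sigma_{p_0}\neq\emptyset$ for \emph{every} $\cal D$, and for $\cal D$ in a dense open subset of $\grassman(n-2,\C^n)$ one has $\Sigma_{p_0}\not\subset H_{\cal D}$; any $q\in\sigma^{-1}(p_0)$ with $\mu(q)\in H_{\cal D}$ then lies on the strict transform of $\Pi_{\cal D}$ in $\tilde X'$ and satisfies $\sigma(q)=p_0$, so $p_0\in\tilde\Pi_{\cal D}$, and $p_0$ is a base point.

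The main obstacle is that last step: one has to control the preimage of $H_{\cal D}$ near $p_0$ carefully enough to be sure that a point of $\sigma^{-1}(p_0)\cap\mu^{-1}(H_{\cal D})$ genuinely contributes to the strict transform rather than being absorbed into the exceptional locus of $\sigma$. This is precisely what the inclusion $\Sigma_{p_0}\not\subset H_{\cal D}$ for generic $\cal D$ guarantees, and verifying that failure uses the fact that the hyperplanes $H_{\cal D}$ sweep out $\grassman(2,\C^n)$ as $\cal D$ varies. Equisingularity of the family $\{\Pi_{\cal D}\}_{\cal D\in\Omega}$ finally lets us promote the genericity statement to the assertion that $p_0$ lies on $\tilde\Pi_{\cal D}$ for \emph{all} $\cal D\in\Omega$, completing the argument.
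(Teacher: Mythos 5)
The paper does not give its own proof of this lemma; it cites it directly from Spivakovsky (Part III, Theorem 1.2) and Gonzalez-Sprinberg (Section 2), so there is no in-paper argument to compare against. Your proof is a reconstruction, and its central ideas are sound and match the spirit of the references: you correctly identify $\Pi_{\cal D}$ as $\overline{\lambda^{-1}(H_{\cal D})}$ with $H_{\cal D}$ a hyperplane section in the Pl\"ucker embedding, you correctly reduce "factoring through Nash modification" to regularity of $\tilde\lambda=\lambda\circ\pi$, and you correctly use ampleness of the Pl\"ucker class to force $\mu(\sigma^{-1}(p_0))\cap H_{\cal D}\neq\emptyset$ at an indeterminacy point. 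The forward direction (regularity of $\tilde\lambda$ at $p$ implies $p$ is not a base point) is complete as written.

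There is, however, a genuine gap in the converse, at precisely the step you flag as delicate. You claim that $\Sigma_{p_0}\not\subset H_{\cal D}$ is what guarantees that a point $q\in\sigma^{-1}(p_0)\cap\mu^{-1}(H_{\cal D})$ lies on the strict transform $\Pi_{\cal D}^{*}$ rather than only on exceptional components. But $\Sigma_{p_0}\not\subset H_{\cal D}$ only rules out components of $\sigma^{-1}(p_0)$ itself from appearing in the divisor $\mu^*H_{\cal D}$. Writing $\mu^*H_{\cal D}=\Pi_{\cal D}^{*}+\sum_i a_iE'_i$ with $E'_i$ the exceptional curves of $\pi\circ\sigma$, the point $q$ you find could a priori lie only on some $E'_i$ with $a_i>0$ that is \emph{not} a component of $\sigma^{-1}(p_0)$ (for instance a curve coming from $E$ on which the Gauss map degenerates into $H_{\cal D}$), and then $q$ tells you nothing about $\Pi_{\cal D}^{*}$. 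The fix is easy but needs to be stated: strengthen the genericity requirement on $\cal D$ to demand $\mu(E'_i)\not\subset H_{\cal D}$ for \emph{every} exceptional curve $E'_i$. Since each $\mu(E'_i)$ is a fixed point or irreducible curve in $\grassman(2,\C^n)$, the set of $\cal D$ for which $\mu(E'_i)\subset H_{\cal D}$ is a proper closed subset, and there are finitely many $E'_i$, so generic $\cal D$ avoids all of them; then $\mu^*H_{\cal D}=\Pi_{\cal D}^{*}$ exactly (no fixed exceptional part), $\sigma^{-1}(p_0)\cdot\mu^*H_{\cal D}>0$ by the projection formula and ampleness, and any intersection point is automatically on $\Pi_{\cal D}^{*}$. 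This is the same "sweeping" argument you invoke, just applied to all exceptional curves rather than only $\sigma^{-1}(p_0)$.

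Finally, the appeal to equisingularity in the last sentence is unnecessary and does not obviously do what you want. A base point of a family/linear system is by definition a point lying on the \emph{generic} member (equivalently, on all members of the corresponding linear system after removing the fixed part), so once you have shown $p_0\in\Pi_{\cal D}^{*}$ for $\cal D$ in a dense open subset of $\Omega$, you are done; upgrading this to literally all $\cal D\in\Omega$ is not required, and equisingularity of the family in the sense of strong simultaneous resolution would take more work to exploit here than the statement deserves.
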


Let us fix $\cal D \in \Omega$. We suppress the subscript
$\cal D$ and note simply $\ell$ for $\ell_{\cal D}$ and $\Pi$ and
$\Delta$ for the polar and discriminant curves of $\ell|_X$.  The
\emph{local bilipschitz constant of $\ell|_X$} is the map
$K\colon X\setminus \{0\}\to \R\cup\{\infty\}$ defined as follows. It is
infinite on the polar curve and at a point $p\in X\setminus \Pi$ it is
the reciprocal of the shortest length among images of unit vectors in
$T_{p} X$ under the projection $d\ell\colon T_{p} X\to \C^2$.

\begin{proposition}\label{le:very thin} 
  Let ${\pi'} \colon {\widetilde X}' \to X$ be a resolution of $X$
  which factors through Nash modification.  Let  $\Pi^*$ denote  the
  strict transform of the polar curve $\Pi$ by $\pi'$.
 Given any neighborhood $U$ of\/ ${\Pi}^*\cap
  ({\pi'})^{-1}(B_\epsilon\cap X)$ in ${\widetilde X}'\cap
  ({\pi'})^{-1}(B_\epsilon\cap X)$, the local bilipschitz constant $K$
  is bounded on $(B_\epsilon \cap X)\setminus \pi'(U)$.
\end{proposition}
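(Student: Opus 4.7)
Since $\pi'$ factors through Nash modification, the Gauss map $\lambda\colon X\setminus\{0\}\to\grassman(2,\C^n)$ lifts to a continuous map $\widetilde\lambda\colon\widetilde X'\to\grassman(2,\C^n)$ (by definition of Nash modification together with Lemma~\ref{le:nash}). I will work with the function $\widetilde K\colon\widetilde X'\to\R\cup\{\infty\}$ defined by
\[
  \widetilde K(p):=1\big/\min_{v\in\widetilde\lambda(p),\,|v|=1}|d\ell(v)|,
\]
i.e.\ the reciprocal of the smallest singular value of $d\ell$ restricted to the tangent plane $\widetilde\lambda(p)$. This function is continuous as a map to $\R\cup\{\infty\}$, and it agrees with $K\circ\pi'$ at every point of $\widetilde X'$ that is not on the exceptional divisor $E$ of $\pi'$. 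So the proposition reduces to bounding $\widetilde K$ on $\bigl(\widetilde X'\cap(\pi')^{-1}(B_\epsilon\cap X)\bigr)\setminus U$, a compact set; once $\widetilde K$ is shown to be finite on this compact set, continuity alone gives a uniform bound $M$, and pushing down via $\pi'$ concludes the proof.

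Finiteness on this compact set is equivalent to showing that the locus $\widetilde S:=\{p\in\widetilde X':\widetilde\lambda(p)\cap\cal D\neq 0\}=\widetilde\lambda^{-1}(\Sigma_{\cal D})$, where $\Sigma_{\cal D}$ denotes the Schubert divisor of $2$-planes in $\C^n$ meeting $\cal D$ nontrivially, is contained in $\Pi^*$ near $(\pi')^{-1}(0)$. One containment is automatic: off $E$ the locus $\widetilde S$ is, by definition of the polar curve, exactly the preimage of $\Pi$, so $\widetilde S\supseteq\Pi^*$. The reverse inclusion amounts to showing $\widetilde S$ has no extra components supported on $E$. I will argue this in two steps. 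First, no whole exceptional component $E_\nu$ can lie in $\widetilde S$: the image $\widetilde\lambda(E_\nu)$ is an analytic subset of $\grassman(2,\C^n)$ of complex dimension at most $1$, and for $\cal D$ in the generic open set $\Omega\subset\grassman(n-2,\C^n)$ the codimension-one Schubert cycle $\Sigma_{\cal D}$ does not contain $\widetilde\lambda(E_\nu)$; hence $\widetilde S\cap E_\nu$ is a finite set. Second, these finitely many exceptional points of $\widetilde S$ actually lie in $\Pi^*$: the no-base-points property of Lemma~\ref{le:nash} says exactly that the family of strict transforms $\Pi^*_{\cal D}$ sweeps out, as $\cal D$ varies in $\Omega$, every point of $E$ at which a limit tangent plane meets $\cal D$; equivalently, any isolated exceptional point of $\widetilde\lambda^{-1}(\Sigma_{\cal D})$ is necessarily an intersection point of $\Pi^*_{\cal D}$ with $E$.

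With $\widetilde S=\Pi^*$ established near $(\pi')^{-1}(0)$, the conclusion is immediate. The compact set $C:=\bigl(\widetilde X'\cap(\pi')^{-1}(B_\epsilon\cap X)\bigr)\setminus U$ is disjoint from $\Pi^*=\widetilde S$, so the continuous function $\widetilde K$ is finite at every point of $C$; compactness yields a uniform bound $\widetilde K\le M$ on $C$. Since $\pi'(C)\supseteq(B_\epsilon\cap X)\setminus\pi'(U)$ and $\widetilde K(p)=K(\pi'(p))$ whenever $p\notin E$ (and for $p$ on $E$ the point $\pi'(p)$ is $0$, which is not in the domain of $K$), this gives $K\le M$ on $(B_\epsilon\cap X)\setminus\pi'(U)$.

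The main obstacle is the identification $\widetilde S=\Pi^*$, and in particular ruling out stray components of $\widetilde S$ living on the exceptional divisor. This is precisely where the genericity assumption $\cal D\in\Omega$ (via the no-basepoints characterization of Lemma~\ref{le:nash}) is doing the real work; every other step is a soft continuity-and-compactness argument.
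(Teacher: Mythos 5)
Your proof takes the same route as the paper: lift the local bilipschitz constant to the resolution through the Gauss map (your $\widetilde\lambda$ and $\widetilde K$ are the paper's $\sigma$ and $\alpha\circ\sigma$), observe that this lifted function is continuous, agrees with $K\circ\pi'$ off the exceptional divisor, and is finite away from $\Pi^*$, then conclude by compactness of $(\pi')^{-1}(B_\epsilon)\setminus U$. Where you go further than the paper is in trying to justify the middle claim --- the paper simply asserts that $\alpha\circ\sigma$ ``takes finite values outside $\Pi^*$,'' while you reduce this to the identification $\widetilde\lambda^{-1}(\Sigma_{\cal D})=\Pi^*$ near the exceptional divisor.

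That extra detail is valuable, and your step (a) (no exceptional curve lies inside $\widetilde\lambda^{-1}(\Sigma_{\cal D})$ for generic $\cal D$, since $\widetilde\lambda(E_\nu)$ is a fixed curve in the Grassmannian and the Schubert divisor $\Sigma_{\cal D}$ moves) is correct, though strictly it requires that the paper's $\Omega$ be shrunk to incorporate this condition. Step (b), however, misinvokes Lemma~\ref{le:nash}. The ``no base points'' property says $\bigcap_{\cal D'}\Pi^*_{\cal D'}=\emptyset$ --- a statement about the whole pencil of polars --- and it does not by itself imply that an isolated point of $\widetilde\lambda^{-1}(\Sigma_{\cal D})$ on $E$, for your one fixed $\cal D$, lies in $\Pi^*_{\cal D}$. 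What actually rules out such isolated points is purity of dimension: $\Sigma_{\cal D}$ is a Cartier divisor in $\grassman(2,\C^n)$, so its pullback $\widetilde\lambda^{-1}(\Sigma_{\cal D})$ to the smooth surface $\widetilde X'$ is either all of $\widetilde X'$ or a (possibly empty) purely one-dimensional analytic set; genericity of $\cal D$ rules out the first case and makes the divisor nonempty, so there are no isolated points, and together with step (a) this forces $\widetilde\lambda^{-1}(\Sigma_{\cal D})$ to be the closure of its off-exceptional part, i.e.\ $\Pi^*$. With that substitution your argument is sound; as written, step (b) is not a valid deduction from the stated lemma.
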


\begin{proof}
  Let $\sigma \colon {\widetilde X}' \to \grassman(2,\C^n)$ be the map
  induced by the projection $p_2 \colon \check X \subset X \times
  \grassman(2,\C^n) \to \grassman(2,\C^n)$ and let $\alpha \colon
  \grassman(2,\C^n) \to \R \cup \{\infty\}$ be the map sending $H \in
  \grassman(2,\C^n)$ to the bilipschitz constant of the restriction
  $\ell|_{H}\colon H \to \C^2$.  The map $\alpha \circ \sigma$
  coincides with $K \circ {\pi'}$ on ${\widetilde X}' \setminus
  {\pi'}^{-1}(0)$ and takes finite values outside ${\Pi}^*$. The map
  $\alpha \circ \sigma$ is continuous and therefore bounded on the
  compact set ${\pi'}^{-1}(B_{\epsilon}) \setminus U$.
\end{proof}
 In the rest of the section, we consider a branch $\Delta_0$
  of the discriminant curve $\Delta$ and the component $\Pi_0$ of the
  polar of $\ell$ such that $\ell(\Pi_0)=\Delta_0$. We will study the
  behavior of $\ell$ on a suitable zone $A$ in $X$ containing
  $\Pi_0$, outside of which $\ell$ is a local bilipschitz homeomorphism. 

We choose coordinates in $\C^2$ so that $\Delta_0$ is
not tangent to the $y$-axis. Then
$\Delta_0$ admits a Puiseux series expansion 
%of the form 
$$y=\sum_{j\ge 1} a_jx^{{p_j}}\in \C\{x^{\frac1N}\}, \quad \text{ with
  }p_j\in \Q,\quad
%\text{ and } 
1\le p_1<p_2<\cdots\,.$$ 
%Here $1\lep_1<p_2<\dots$ are in $\Q$ 
Here $N=\lcm_{j\ge 1}\denom(p_j)$, where ``denom'' means denominator.

For $K_0 \geq 1$, set $B_{K_0}:=\bigl\{ p\in X\cap
(B_\epsilon\setminus \{0\}):K(p)\ge K_0\bigr\}\,,$ and let
$B_{K_0}(\Pi_0)$ denote the closure of the connected component of $
B_{K_0}\setminus \{0\}$ which contains $\Pi_0 \setminus\{0\}$. We set
$N_{K_0}(\Delta_0) = \ell(B_{K_0}(\Pi_0))$.

\begin{proposition}[Polar Wedge Lemma]\label{prop:polar wedge}~
\begin{enumerate}
\item\label{it:LZ1} There exists $k\ge1$ such that if $s:=p_k$ then
  for any $\alpha>0$ there is $K_0\ge 1$ such that $N_{K_0}(\Delta_0)$
  is contained in the set
$$B=\bigl\{(x,y):\Bigl|y-\sum_{j\ge1}a_jx^{{p_j}}
\Bigr|\le\alpha|x|^{s}\bigr\}\,.$$
We call the largest such $s$ the \emph{contact exponent} of $\Delta_0$.
\item \label{it:LZ2}
% Let $M:=\frac{N}{\denom(s)}$ and let $A$ be the closure of the component of $\ell^{-1}(B)
%   \setminus \{0\}$ which contains $\Pi_0$. Then 
%   the restriction $\ell|_{A}$ is a $\Z/M\times\Z/2$ cover over $B$
%   with $2$-fold  branching along $\Delta_0$ and
% $A$ is bilipschitz
%   homeomorphic to $B$.
Let $A_0$ be the closure of the component of $\ell^{-1}(B)
  \setminus \{0\}$ which contains $\Pi_0$. Then up to bilipschitz
  equivalence $A_0$ is a topological cone on a solid torus, $([0,\epsilon]\times S^1\times
  D^2)/(\{0\}\times S^1\times D^2)$, equipped with the metric
  $dr^2+r^2d\theta^2+r^{2s}g$, where $g$ is the standard metric on the
  unit disk. We call such an $A_0$ a \emph{polar wedge}.
 \end{enumerate}
\end{proposition}

\begin{remark*}
 Note that in part \eqref{it:LZ1} $B$ could be replaced by the set
 $$B'=\bigl\{(x,y):\Bigl|y-\sum_{j\ge1,p_j\le s}a_jx^{{p_j}}
 \Bigr|\le\alpha|x|^{s}\bigr\}\,,$$ 
 since truncating higher order terms
 does not change the bilipschitz geometry. Up to bilipschitz
 equivalence this does not change $A_0$ in part \eqref{it:LZ2} either.
%
% In part \eqref{it:LZ2} one can make the bilipschitz constant approach
% $1$ as $r\to 0$ by using the metric $dr^2+\frac
% N{\denom(s)}r^2d\theta^2+cr^{2s}g$ for suitable $c>0$.
\end{remark*}
\begin{proof}[Proof of Proposition \ref{prop:polar wedge}]
  We are considering the germ $(X,0)$, so in this proof all subsets of
  $X$ or $\widetilde X'$ are implicitly intersected with $B_\epsilon$
  or $(\pi')^{-1}(B_\epsilon)$ for some sufficiently small $\epsilon$.

  According to Proposition \ref{le:very thin}, for each neighborhood
  ${A}^*$ of $\Pi_0^*$ in $\widetilde X'$ there exists $K_0$ such
  that $B_{K_0}(\Pi_0) \subset \pi'(A^*)$.
  We first construct such an $A^*$ as the union of a family of
  disjoint strict transforms of components  $\Pi^*_{0,\cal
    D_t}$ of polars  $\Pi^*_{\cal
    D_t}$ parametrized by $t$ in a neighborhood of $0$ in $\C$, and
  with $\cal D_0=\cal D$. So $\Pi_0=\Pi_{0,\cal D_0}$.  Let $E\subset
  \pi'^{-1}(0)$ be the exceptional curve with $E\cap \Pi_0^*\ne
  \emptyset$.

Let $\sigma\colon \widetilde X'\to \grassman(2,\C^n)$ be as in the
  proof of Proposition \ref{le:very thin} and let $U$ denote a small
  neighborhood of $T:=\sigma(E\cap \Pi_0^*)$ in
  $\grassman(2,\C^n)$. We first assume $n=3$ so
  $\grassman(2,\C^n)=\grassman(2,\C^3)\cong P^2\C$. Choose any $T'\in
  \grassman(2,\C^3)\setminus U$ so that $T'\subset \C^3$ contains
  $\cal D$.  The line $L$ in $\grassman(2,\C^3)$ through $T$ and $ T'$
  is the set of $2$-planes in $\C^3$ containing the line $\cal D$, so
  its inverse image under $\sigma$ is exactly $\Pi^*$. Now consider
  the pencil of lines $L_t$ through $T'$, parametrized so
  $L_0=L$. Each $L_t$ is the set of $2$-planes containing some line
  $\cal D_t$. The family of inverse images of the $L_t$ which
  intersect $U$ is a family $\{\Pi^*_{\cal D_t}\}$ of disjoint strict
  transforms of polar components foliating an open neighborhood of
  $\Pi^*$.

  If $n\ge 3$ we choose an $(n-3)$-dimensional subspace $W\subset
  \C^n$ transverse to $T$. Shrinking $U$ if necessary, we can assume
  that $W$ is transverse to every $T''\in U$. Let $\grassman(2,\C^n;W)$ denote the set  of $2$-planes in $\C^n$ transverse to
  $W$, so the projection $p\colon \C^n\to \C^n/W$ induces a map
  $p'\colon \grassman(2,\C^n;W) \to \grassman(2,\C^n/W)\cong
  P^2\C$. We again consider the pencil of lines $L_t$ in
  $\grassman(2,\C^n/W)\cong P^2\C$ through some point outside
  $p'(U)$. The family of inverse images by $p'\circ\sigma$ of those of
  these lines which intersect $p'(U)$ is again a family of disjoint
  strict transforms of polar components foliating an open neighborhood
  of $\Pi^*$.  The polar corresponding to $L_t$ is the polar for the
  projection with kernel $\cal D_t$, where $\cal D_t/W\subset \C^n/W$
  is again the common line in the family of $2$-planes $L_t\subset
  \grassman(2,\C^n/W)$.  Indeed, for any $2$-plane $T'$ in $\C^n$
  transverse to $W$, the image $p(T')$ contains $\cal D_t/W$ if and
  only if $T'$ intersects $\cal D_t$ nontrivially.

  Consider now the neighborhood $\bigcup_{t\in V}\Pi_{\cal D_t}^*$ of
  $\Pi^*$ where $V$ is a small closed disk in $\C$ centered at $0$. We
  denote by $A^*$ the connected component of $\bigcup_{t\in
    V}\Pi_{\cal D_t}^*$ which contains $\Pi_0^*$. Then $A^* =
  \bigcup_{t\in V}\Pi_{0,\cal D_t}^*$, where $\Pi_{0,\cal D_t}$ is a
  branch of $\Pi_{\cal D_t}$, and $\Pi_{0,\cal D_0}=\Pi_0$.  We write
  $A:=\pi'(A^*)=\bigcup_{t\in V}\Pi_{0,\cal D_t}$.

The curves $\ell(\Pi_{0,\cal D_t})$ for $t\in V$  have Puiseux
  expansions 
 $$y=\sum_{j\ge 1} a_j(t)x^{{p_j}}\in \C\{x^{\frac1N}\}$$
 where $a_j(t)\in\C\{\!\{t\}\!\}$. The contact exponent $s$ is the
 first $p_j$ for which the coefficient $a_j(t)$ is non-constant.  Part
 \eqref{it:LZ1} of the proposition then follows.
 \begin{remark*}
In fact, according to the proof of Lemme 1.2.2~ii) in Teissier
  \cite[p.~462]{teissier}, the family of plane curves $\ell_{\cal
    D}(\Pi_{\cal D'})$ parametrized by $(\cal D,\cal D') \in \Omega
  \times \Omega$ is equisingular on a Zariski open neighborhood of the
  diagonal (a more explicit proof for hypersurfaces is found in
  Brian\c con-Henry \cite[Theorem 3.7]{BH}).  It follows that we can
  choose $V$ so that the curves  $\ell(\Pi_{0,\cal D_t})$ for $t\in V$
  form an equisingular family of plane curves.
 \end{remark*}
To prove part \eqref{it:LZ2} we first choose coordinates
$(z_1,\dots,z_n)$ in $\C^n$ and $(x,y)$ in $\C^2$ so that $\ell$ is
the projection $(x,y)=(z_1,z_2)$. We may assume $z_1$ and $z_2$ are
generic linear forms for $X$.  The multiplicity of $z_1$ along the
exceptional curve $E$ is $N$.  Let $(u,v)$ be local coordinates
centered at $\Pi_{0,\cal D_0}^*\cap E$ such that $v=t$ is the local
equation for $\Pi_{0,\cal D_t}^*$ and $z_1=u^N$. Then $z_2$ has the
form
  $$z_2 = u^Nf_0(u) + u^{Ns} \sum_{i\geq 1}v^i f_i(u),$$
  where $f_k(u)\in \C\{\!\{u\}\!\}$ for $k\ge1$ (and
  $u^Nf_0(u)=\sum_{j}a_j(0)u^{Np_j}$ in our earlier notation).

  Now, $\ell\circ\pi$ has $\Pi_{0,\cal D_0}^*\cup \{u=0\}$ as
  critical locus. The jacobian of $\ell\circ\pi$ is
$$J(\ell\circ \pi)(u,v)=
\begin{pmatrix}
  Nu^{N-1}&0\\\star&u^{Ns}(f_1(u)+2vf_2(u)+\cdots)
\end{pmatrix}\,,
$$
so $\Pi_{0,\cal D_0}^*\cup \{u=0\}$ has equation $Nu^{N+Ns-1}g(u,v)=0$
where $g(u,v)=f_1(u)+2vf_2(u)+\cdots$. Since $v=0$ is the equation of
$\Pi_{0,\cal D_0}^*$ this implies $f_1(u)=0$ and $f_2(0)\ne 0$. So
$g(u,v)=vh_2(u,v)$ with $h_2(u,v)=2f_2(u)+3vf_3(u)+\cdots$ a unit in
$\C\{\!\{u,v\}\!\}$. Summarizing,  
\begin{align*}
  z_1&=u^N\\
z_2&=u^Nf_{2,0}(u)+v^2u^{Ns}h_2(u,v)\\
z_j &=
 u^Nf_{j,0}(u)+vu^{Ns}h_j(u,v)\,,\quad j\ge 3
\end{align*}
with $h_2(u,v)$ a unit. Moreover, at least one $h_j(u,v)$ with $j\ge
3$ is a unit by a small adaptation of the argument of \cite[p.~464,
lines 7--11 ff.]{teissier}.

 The strict transform of
$A_0$ by the resolution $\pi'$ is the set expressed in local
coordinates by  
$$ A_0^*=\{(u,v): 
|z_2(u,v)-\sum_{j\ge1}a_j(0)u^{p_jN}|\le \alpha|z_1(u,v)|^s\}\,.
$$
Using the equations for $z_1$ and $z_2$ above we then obtain that
$$A_0^*=\{(u,v):|v^2h_2(u,v)|\le \alpha\}\,.$$
Since $h_2$ is a unit in $\C\{\!\{u,v\}\!\}$, the germ $(A_0,0)$
agrees up to order $>s$ with the germ $(A_0',0)$, where
$A_0'=\pi'(\{(u,v): |v|^2\le \beta\})$ where
$\beta=\alpha/|h_2(0,0)|$. Therefore the germs $(A_0,0)$ and
$(A_0',0)$ are bilipschitz equivalent, so it suffices to prove part
\eqref{it:LZ2} for the germ $(A_0',0)$. The cone structure of part
\eqref{it:LZ2} of the proposition is given by the foliation by solid
tori $T_r:=\{|z_1|=r\}\cap A_0'$. Fixing $c$ such that $|c|=r$, the
intersection $\{z_1=c\}\cap A_0'$ consists of $N$ meridianal disks.
Each is to high order of the form
$\{|c|^s(0,v^2h_2(0,0),vh_3(0,0),\dots,vh_n(0,0)):|v|\le
\sqrt\beta\}$, and is therefore bilipschitz equivalent to a flat disk
of radius proportional to $|c|^s$.

The tangent cone of $(A_0',0)$ is the line $L$ spanned by
$(1,f_{2,0}(0),f_{3,0}(0), \dots,f_{n,0}(0))$, which is transverse to the hyperplanes
$z_1=c$, so the angle between this line $L$ and the meridianal disk
sections is bounded away from $0$.  Let $D_\epsilon$ be the disk of
radius $\epsilon$ in $L$. Then up to bilipschitz equivalence the
transverse disks can be considered to be orthogonal to $D_\epsilon$,
giving a metric on $(A_0',0)$ outside the origin as a disk bundle over
$D_\epsilon\setminus \{0\}$ with fibers orthogonal to this disk and of
radius proportional to $r^s$ at distance $r$ from the origin.
\end{proof}
\begin{remark*}[VTZ]
  Recall that a Puiseux exponent $p_j$ of a plane curve given by
  $y=\sum_{i}a_ix^{{p_i}}$ is
  \emph{characteristic} if the embedded topology of the plane curves
  $y=\sum_{i=1}^{j-1}a_ix^{{p_i}}$ and $y=\sum_{i=1}^ja_ix^{{p_i}}$
  differ; equivalently $\denom(p_j)$ does not divide
  $\lcm_{i<j}\denom(p_i)$. We denote the characteristic exponents by
  ${p_{j_k}}$ for $k=1,\dots,r$, and write $p_{max}=p_{j_r}$ for the largest
  characteristic exponent.

  We had believed that the contact exponent of a component of the
  discriminant curve satisfies $s\ge p_{max}$ in general (we called
  this the ``Very Thin Zone Lemma'' or ``VTZ'' for short), but the
  referee pointed out a gap in the proof. And indeed, VTZ is
  false. For example, for the hypersurface given by
  $(x^2+y^2+z^2)^2+x^5+y^5+z^5=0$ the components of the polar curve
  are cusps with exponent $\frac32$ but the contact exponent is
  $1$.
 % In this case the polars are in the thick zone; we
 %  do not yet know an example where VTZ fails for a polar component in
 %  the thin zone. 
VTZ is true if the multiplicity of $X$ is $\le 3$,
  and then $s$ is often significantly larger than $p_{max}$. In
  Examples \ref{ex:E8} and \ref{ex:two pair} below
%as we
%show below, for the singularity $X(2,3,5)=\{(x,y,z)\in
%\C^3:x^2+y^3+z^5=0\}$ 
we have respectively $p_{max}={\frac53}$ and $s=\frac{10}3$, and
$p_{max}=\frac{17}{9}$ and $s=\frac{124}{9}$. 
\end{remark*}

\begin{example}\label{ex:E8}
  Let $(X,0)$ be the $E_8$ singularity with equation
  $x^2+y^3+z^5=0$. Its resolution
  graph, with all Euler weights $-2$ and decorated with arrows corresponding to the strict transforms
  of the coordinate functions $x, y$ and $z$, is: 
\begin{center}
\begin{tikzpicture}
  \draw[fill=white ] (-1,0)circle(2pt);
   \draw[fill=white ] (1,0)circle(2pt);
   \draw[thin ](-1,0)--(5,0);
      \draw[thin ](1,0)--(1,1);
      \draw[thin,>-stealth,->](-1,0)--+(-0.8,0.8);
       \draw[thin,>-stealth,->](5,0)--+(0.8,0.8);
        
        \draw[thin,>-stealth,->](1,1)--+(1,0.5);
  \draw[fill=white   ] (1,0)circle(2pt);
  \draw[fill=white  ] (3,0)circle(2pt);
     \draw[fill =white ] (1,1)circle(2pt);      
        \draw[fill =white ] (4,0)circle(2pt);
  \draw[fill=white   ] (0,0)circle(2pt);
   \draw[fill=white   ] (2,0)circle(2pt);
    \draw[fill=white ] (-1,0)circle(2pt);
   \draw[fill=white ] (1,0)circle(2pt); 
      \draw[fill=white ] (5,0)circle(2pt); 
 
\node(a)at(1,-0.3){   $v_ 1$};
\node(a)at(2,-0.3){   $v_ 2$};
\node(a)at(3,-0.3){   $v_ 3$};
\node(a)at(4,-0.3){   $v_ 4$};
\node(a)at(5,-0.3){   $v_ 5$};
\node(a)at(0,-0.3){   $ v_ 6$};
\node(a)at(-1,-0.3){   $ v_ 7$};
\node(a)at(0.7,1){   $v_ 8$};

\node(a)at(-2,0.8){   $y$};
\node(a)at(2.2,1.5){   $x$};
\node(a)at(6,0.8){   $z$};
 
  \end{tikzpicture} 
  \end{center}
  
  We denote by $C_j$ the exceptional curve corresponding to the vertex
  $v_ j$.  Then the total transform by $\pi$ of the coordinate
  functions $x, y$ and $z$ are:
  \begin{align*}
    (x \circ \pi) &= 15C_1 +12C_2+9C_3+6C_4+3C_5+10C_6+5C_7 +8C_8+   x^* \\
(y \circ \pi) &= 10C_1 +8C_2+6C_3+4C_4+2C_5+7C_6+4C_7 +5C_8+   y^* \\
(z \circ \pi) &= 6C_1 +5C_2+4C_3+3C_4+2C_5+4C_6+2C_7 +3C_8+   z^* 
  \end{align*}

  Set $f(x,y,z) = x^2+y^3+z^5$. The polar curve $\Pi$ of a generic
  linear projection $\ell\colon (X,0) \to (\C^2,0)$ has equation $g=0$
  where $g$ is a generic linear combination of the partial derivatives
  $f_x = 2x$, $f_y=3y^2$ and $f_z=5z^4$. The multiplicities of $g$ are
  given by the minimum of the compact part of the three divisors
  \begin{align*}
(f_x \circ \pi) &= 15C_1 +12C_2+9C_3+6C_4+3C_5+10C_6+5C_7 +8C_8+  f_x^* \\
(f_y \circ \pi) &=  20C_1 +16C_2+12C_3+8C_4+4C_5+14C_6+8C_7 +10C_8+  f_y^*  \\
(f_z \circ \pi) &= 24C_1 +20C_2+16C_3+12C_4+8C_5+16C_6+8C_7 +12C_8+ f_z^*
  \end{align*}
We then obtain that the total transform of $g$ is equal to:
$$(g \circ \pi) = 15C_1 +12C_2+9C_3+6C_4+3C_5+10C_6+5C_7 +8C_8+ \Pi^*\,.$$
In particular, $\Pi$ is resolved by $\pi$ and its strict transform
$\Pi^*$ has just one component, which intersects $C_8$.  Since the
multiplicities $m_8(f_x)=8$, $m_8(f_y)=10$ and $m_8(z)=12$ along $C_8$
are distinct, the family of polar curves, i.e., the linear
  system generated by $f_x, f_y$ and $f_z$, has a base point on
$C_8$. One must blow up twice to get an exceptional curve $C_{10}$
along which $m_{10}(f_x)=m_{10}(f_y)$, which resolves the linear
system. Then $N_{K_0}(\Delta_0)$ is included in the image by $\pi$
  of a neighborhood of $\Pi^*= \Pi_{\cal D}^*$ in ${\cal
    N}(C_{10})$ foliated by strict transforms $\Pi_{\cal D'}^*$,
  $\cal D'$ in a small disk around $\cal D$ in $\grassman(2,\C^3)$ as
  in the proof of Proposition \ref{prop:polar wedge}.

 \begin{center}
\begin{tikzpicture}
  \draw[fill=white ] (-1,0)circle(2pt);
   \draw[fill=white ] (1,0)circle(2pt);
   \draw[thin ](-1,0)--(5,0);
    \draw[thin ](1,1)--(3,1);
      \draw[thin ](1,0)--(1,1);

        \draw[thin,>-stealth,->](3,1)--+(1,0.5);
          \draw[thin,>-stealth,->](3,1)--+(1,-0.5);
  \draw[fill=white   ] (1,0)circle(2pt);
  \draw[fill=white  ] (3,0)circle(2pt);
     \draw[fill =white ] (1,1)circle(2pt);      
        \draw[fill =white ] (4,0)circle(2pt);
  \draw[fill=white   ] (0,0)circle(2pt);
   \draw[fill=white   ] (2,0)circle(2pt);
    \draw[fill=white ] (-1,0)circle(2pt);
       \draw[fill=white ] (5,0)circle(2pt); 
      
      \draw[fill=white ] (2,1)circle(2pt); 
\draw[fill=white ] (3,1)circle(2pt); 
    
  \node(a)at(1,1.3){   $-3$};
 \node(a)at(2,1.3){   $-2$};
\node(a)at(3,1.3){   $-1$};

\node(a)at(2,0.7){   $ v_ 9$};
\node(a)at(3,0.7){   $v_ {10}$};

\node(a)at(4.5,1){   $\Pi^*$};
  \end{tikzpicture} 
  \end{center}
  
  We now compute the contact exponent $s$ in Proposition
  \ref{prop:polar wedge}. For $(a,b) \in \C^2$ generic,
  $x+ay^2+bz^4=0$ is the equation of the polar curve $\Pi_{a,b}$ of a
  generic projection. The image $\ell(\Pi_{a,b}) \subset \C^2$ under
  the projection $\ell=(y,z)$ has equation
$$y^3 +a^2y^4 + 2aby^2z^4+z^5+b^2z^8=0$$
The discriminant curve $\Delta= \ell(\Pi_{0,0})$ has Puiseux expansion $y=(-z)^{5/3}$,
while for $(a,b) \neq (0,0)$, we get for   $\ell(\Pi_{a,b})$ a Puiseux expansion
$y=(-z)^{5/3} - \frac{a^2}3z^{10/3}+\cdots$. So the discriminant curve $\Delta$ has highest characteristic
exponent $5/3$ and its contact exponent is $10/3$.
\end{example}

\begin{example}\label{ex:two pair} Consider $(X,0)$ with equation
  $z^2+xy^{14}+(x^3+y^5)^3=0$. The dual graph of the minimal
  resolution $\pi$ has two nodes, one of them with Euler class
  $-3$. All other vertices have Euler class $-2$. Similar
  computations show that $\pi$ also resolves the polar $\Pi$ and we
  get:
 \begin{center}
\begin{tikzpicture}
  
   \draw[thin ](0,0)--(10,0);
     \draw[thin ](4,0)--(4,2);
       \draw[thin ](8,0)--(8,1);
    \draw[thin,>-stealth,->](8,1)--+(0.8,0.8);
   
   \draw[fill=white ] (0,0)circle(2pt);
     \draw[fill=white ] (1,0)circle(2pt);
       \draw[fill=white ] (2,0)circle(2pt);
         \draw[fill=white ] (3,0)circle(2pt);
           \draw[fill=white ] (4,0)circle(2pt);
             \draw[fill=white ] (5,0)circle(2pt);
               \draw[fill=white ] (6,0)circle(2pt);
                 \draw[fill=white ] (7,0)circle(2pt);
                   \draw[fill=white ] (8,0)circle(2pt);
                     \draw[fill=white ] (9,0)circle(2pt);
                       \draw[fill=white ] (10,0)circle(2pt);
                     
                       \draw[fill=white ] (4,1)circle(2pt);
                       \draw[fill=white ] (4,2)circle(2pt);
                       
                         \draw[fill=white ] (8,1)circle(2pt);
                         
                         \node(a)at(9,2){   $\Pi^*$};
                          \node(a)at(4,-0.3){   $-3$};
    \end{tikzpicture} 
  \end{center}
  
  Denoting by $C_1$ the exceptional curve such that $C_1 \cap \Pi^*
  \neq \emptyset$, we get $m_1(f_x)=124$, %$m_1(f_y)\geq 126$ and
  $m_1(f_y)=130$ and $m_1(f_z)=71$. Then the linear system of polar
  curves admits a base point on $C_1$ and one has to perform
  $124-71=53$ blow-ups to resolve it. In this case one computes that
  the discriminant curve has two characteristic exponents, $5/3$ and
  $17/9$, and the Lipschitz exponent is $s= 124/9$.
\end{example}

Now we consider again the resolution $\pi\colon\widetilde X\to X$
defined in Section \ref{sec:thick-thin}, which is obtained from a
minimal good resolution by first blowing up base points of the linear
system of generic hyperplane sections and then blowing up intersection
points between \L-curves.  The following results help locate the polar
components relative to the Tjurina components of $\pi$.
\begin{proposition}\label{prop:intersecting L}
  If there are intersecting \L-curves before the final step then
  for any generic plane projection the strict transform of the polar
  has exactly one component through that common point and it
  intersects the two \L-curves transversely.
\end{proposition}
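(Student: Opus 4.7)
The plan is to perform a local Jacobian computation near $p = C_1 \cap C_2$ on the intermediate resolution obtained after steps \eqref{r1} and \eqref{r2} of the construction, but before the blow-ups of step \eqref{r3} that separate intersecting \L-curves. The key input is condition \eqref{r2}: at this stage the linear system of hyperplane sections of $X$ has no basepoints on the resolution, so for a generic linear form $\ell$ on $\C^n$ the strict transform $(\ell)^*$ avoids $p$.

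Choose local coordinates $(u,v)$ at $p$ with $C_1 = \{u=0\}$ and $C_2 = \{v=0\}$, and let $m_k$ denote the multiplicity along $C_k$ of a generic hyperplane section. By the previous remark we may write $\ell\circ\pi = u^{m_1}v^{m_2} h_\ell(u,v)$ for generic $\ell$, with $h_\ell(0,0) \neq 0$ and the exponents $m_1,m_2$ common to all generic $\ell$. For a generic plane projection $\ell = (\ell_1,\ell_2)$, direct differentiation then yields
\begin{equation*}
J(\ell\circ\pi) \;=\; u^{2m_1-1}\,v^{2m_2-1}\bigl(Au + Bv + O(u^2,uv,v^2)\bigr),
\end{equation*}
with
\begin{equation*}
A = m_2\bigl(h_2(0,0)\partial_u h_1(0,0) - h_1(0,0)\partial_u h_2(0,0)\bigr),\quad
B = m_1\bigl(h_1(0,0)\partial_v h_2(0,0) - h_2(0,0)\partial_v h_1(0,0)\bigr).
\end{equation*}
The factor $u^{2m_1-1}v^{2m_2-1}$ is the exceptional contribution of $C_1$ and $C_2$ to the critical locus of $\ell\circ\pi$, so $Au+Bv+O(u^2,uv,v^2) = 0$ is the local equation at $p$ of the strict transform $\Pi^*$ of the polar. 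Provided $A,B \neq 0$, this defines a single smooth branch of $\Pi^*$ through $p$ whose tangent $\{Au+Bv=0\}$ is transverse to both $C_1 = \{u=0\}$ and $C_2 = \{v=0\}$, which gives all three conclusions of the proposition.

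The main obstacle is to verify that $A \neq 0$ and $B \neq 0$ for a generic projection. Each is a Zariski-open condition on the pair $(\ell_1,\ell_2)$, and $A \neq 0$ amounts to a rank-$2$ condition on the linear map $\ell \mapsto \bigl(h_\ell(0,0),\partial_u h_\ell(0,0)\bigr)$ from linear forms on $\C^n$ to $\C^2$. A rank drop would force $\partial_u h_\ell(0,0) = c\cdot h_\ell(0,0)$ identically in $\ell$; in particular, every hyperplane section whose strict transform passes through $p$ (the $(n-1)$-dimensional subspace where $h_\ell(0,0)=0$) would then have its strict transform tangent to $C_2$ at $p$, which is excluded because this tangency is a proper codimension-one condition within that subspace. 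A symmetric argument on the $v$-side disposes of $B$, and the proposition follows.
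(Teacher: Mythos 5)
Your proof is essentially the paper's proof: both set up local coordinates $(u,v)$ at the intersection point with the two \L-curves as axes, write the two coordinates of $\ell\circ\pi$ as $u^{m_1}v^{m_2}$ times units, compute the Jacobian, factor out $u^{2m_1-1}v^{2m_2-1}$, and observe that the residual factor is $Au+Bv+O(2)$ with $A,B$ as you write, so that $A\neq 0\neq B$ gives a single smooth branch of $\Pi^*$ transverse to both axes. The paper streamlines the bookkeeping by normalizing one coordinate to $x=u^mv^n$ (i.e.\ $h_1\equiv 1$), which turns your $A,B$ into $-nb$ and $mc$ with $b=\partial_u h_2(0,0)$, $c=\partial_v h_2(0,0)$; this is only cosmetic.

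The one place your write-up is weaker than the paper's is the justification that $A\neq 0$ and $B\neq 0$. You reduce $A\neq 0$ to the statement that $\ell\mapsto\bigl(h_\ell(0,0),\partial_u h_\ell(0,0)\bigr)$ has rank $2$, which is correct, but your closing assertion that tangency to $C_2$ at $p$ ``is a proper codimension-one condition within that subspace'' is exactly the content of the rank-$2$ claim restated in different words — if the rank were $1$, every strict transform through $p$ \emph{would} be tangent to $C_2$ there, so properness cannot be invoked as an independent fact. The rank-$2$ statement is equivalent to saying that the map to $\mathbf P^{n-1}$ given by the projectivized coordinates $[h_1:\cdots:h_n]$ restricts to an immersion on $C_1$ at $p$, and the paper simply asserts that this is forced by $C_1$ (resp.\ $C_2$) being an \L-curve: in the paper's normalized coordinates it reads as $c\neq 0$ (resp.\ $b\neq 0$). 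You should make this reduction to the \L-curve property explicit rather than appealing to a genericity-of-tangency claim, because the latter is circular as stated; with that repair the argument coincides with the paper's.
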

\begin{proof}
   Let $E_\mu$ and $E_\nu$ denote the two intersecting \L-curves and  choose
  coordinates $(u,v)$ centered at the intersection such that $E_\mu$
  and $E_\nu$ are locally given by $u=0$ and $v=0$ respectively. We
  assume our plane projection is given by $\ell=(x,y)\colon\C^n\to
  \C^2$, so $x$ and $y$ are generic linear forms. Then without loss of
  generality $x=u^mv^n$ in our local coordinates, and
  $y=u^mv^n(a+bu+cv+g(u,v))$ with $a\ne 0$ and $g$ of order
  $\ge2$. The fact that $E_\mu$ and $E_\nu$ are \L-curves means that
  $c\ne0$ and $b\ne0$ respectively. The polar component is given by
  vanishing of the Jacobian determinant
  $\det\frac{\partial(x,y)}{\partial(u,v)}=
  u^{2m-1}v^{2n-1}(mcv-nbu+mvg_v-nug_u)$.  Modulo terms of order $\ge
  2$ this vanishing is the equation $v=\frac{nb}{mc}u$, proving the
  lemma.
\end{proof}
\begin{lemma}[{Snoussi \cite[6.9]{Sn}}]\label{le:snoussi}
If\/ $\Gamma_j$ is a Tjurina component of $\Gamma$ and $E^{(j)}$ the union
of the $E_\nu$ with $\nu\in\Gamma_j$, then the strict transform of the
polar curve of any general linear projection to $\C^2$ intersects
$E^{(j)}$.\qed
\end{lemma}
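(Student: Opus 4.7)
My plan is to reduce the lemma to Snoussi's structural result on essential exceptional curves for the Nash modification, using the tangent--plane map already deployed in the proof of Proposition~\ref{le:very thin}.

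First I would enlarge the resolution $\pi$ to a resolution $\pi'\colon\widetilde X'\to X$ resolving all base points of the linear system of generic polar curves. By Lemma~\ref{le:nash} this $\pi'$ factors as $\pi'=\nu\circ\bar\pi$ through the Nash modification $\nu\colon\check X\to X$. This has the advantage that the tangent--plane map $\sigma\colon\widetilde X'\to\grassman(2,\C^n)$ is everywhere defined and is exactly $p_2\circ\bar\pi$ where $p_2\colon \check X\subset X\times\grassman(2,\C^n)\to\grassman(2,\C^n)$ is the second projection.

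Next I would give the following dichotomy for an exceptional curve $E$ in $\widetilde X'$: the strict transform $\Pi_{\cal D}^*$ of the polar curve of a generic projection $\ell_{\cal D}$ meets $E$ if and only if $\sigma(E)$ is positive--dimensional. This is because $\Pi_{\cal D}=\sigma^{-1}(H_{\cal D})$ for the Schubert hypersurface $H_{\cal D}=\{H\in\grassman(2,\C^n):H\cap\cal D\ne 0\}$, and for generic $\cal D$ the hypersurface $H_{\cal D}$ avoids any prescribed finite set of points of $\grassman(2,\C^n)$ while cutting every curve in $\grassman(2,\C^n)$ in a nonempty finite set.

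Now I would argue by contradiction. If $\Pi^*$ avoided $E^{(j)}$ for some Tjurina component $\Gamma_j$, then by the dichotomy $\sigma$ would be constant on each $E_\nu$ with $\nu\in\Gamma_j$; by connectedness of $E^{(j)}$ it would take a single value $H_0\in\grassman(2,\C^n)$ on the whole of $E^{(j)}$. Equivalently, the map $\bar\pi$ to the Nash modification would contract the entire Tjurina component $E^{(j)}$ to one point of $\check X$. The main obstacle, and precisely the content of Snoussi's theorem~\cite[6.9]{Sn}, is to rule this out: the curves in a Tjurina component are not L--curves, so their tangent--plane limits are exceptional tangent planes of $X$ at $0$, and, together with the transverse contact forced at the nodes where $\Gamma_j$ meets adjacent L--curves (and the normality of $(X,0)$), this prevents the whole component from being contracted by $\bar\pi$. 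I would invoke Snoussi's theorem for this last step to conclude.
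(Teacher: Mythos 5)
The paper offers no proof of this lemma: it is stated as Snoussi's result and closed immediately with \(\square\), so there is no argument of the authors' own to compare against. Your proposal is therefore best judged on its internal coherence. The reduction you sketch is a standard and reasonable one: pass to a resolution \(\pi'\) that factors through the Nash modification (Lemma~\ref{le:nash}), use the tangent-plane map \(\sigma\) and the Schubert hypersurface \(H_{\mathcal D}=\{H:H\cap\mathcal D\neq 0\}\) to characterize which exceptional curves the strict transform of a generic polar meets, and observe that if the polar missed all of \(E^{(j)}\) then \(\sigma\) would be constant on each \(E_\nu\), \(\nu\in\Gamma_j\), hence (by connectedness of \(E^{(j)}\)) on the whole Tjurina component, which would thus be contracted by \(\bar\pi\) to a single point of \(\check X\). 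The dichotomy itself needs two small supporting observations you leave implicit: that for generic \(\mathcal D\) the Cartier divisor \(\sigma^*H_{\mathcal D}\) contains no exceptional curve as a component, so that every point of \(\sigma^{-1}(H_{\mathcal D})\cap E\) actually lies on the strict transform; and that if \(\Pi^*\) misses \(E^{(j)}\) there are no polar base points on \(E^{(j)}\), so the further blow-ups from \(\pi\) to \(\pi'\) leave \(E^{(j)}\) untouched and \(\sigma\) is indeed defined on the \(E_\nu\) you are considering.

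The genuine gap is the end: you do not prove the crucial non-contraction claim, you cite Snoussi for it. But Snoussi~\cite[6.9]{Sn} is precisely what the paper cites for the entire lemma; the authors' labelling indicates that Snoussi's statement is the lemma itself, not the intermediate claim that a Tjurina component cannot be collapsed by \(\bar\pi\). Your sentence about ``exceptional tangent planes'' and ``transverse contact at the nodes'' is a heuristic, not an argument, and it would need substantial work (essentially redoing Snoussi's analysis) to become one. So either your route collapses to ``cite Snoussi,'' matching the paper, or you have made an honest reduction but left the hard part — the actual content of the lemma — unproved. You should either prove the non-contraction step or simply cite Snoussi for the lemma as stated, without the intermediate scaffolding.
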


\section{Milnor balls}\label{sec:balls}

From now on we assume our coordinates $(z_1\dots,z_n)$ in $\C^n$ are
chosen so that $z_1$ and $z_2$ are generic linear forms and
$\ell:=(z_1,z_2)\colon X\to \C^2$ is a generic linear projection. In
this section we denote by $B_\epsilon^{2n}$ the standard round ball in
$\C^n$ of radius $\epsilon$ and $S_\epsilon^{2n-1} $ its boundary.

The family of Milnor balls we use in the sequel consists of standard
``Milnor tubes'' associated with the Milnor-L\^e fibration for the map
$\zeta:=z_1|_X\colon X\to \C$. Namely, for some sufficiently small
$\epsilon_0$ and some $R>0$ we define for $\epsilon\le\epsilon_0$:
$$B_\epsilon:=\{(z_1,\dots,z_n):|z_1|\le \epsilon, |(z_1,\dots,z_n)|\le
R\epsilon\}\quad\text{and}\quad S_\epsilon=\partial B_\epsilon\,,$$ 
where $\epsilon_0$ and $R$ are chosen so that for $\epsilon\le \epsilon_0$:
\begin{enumerate}
\item\label{it:mb1} $\zeta^{-1}(t)$ intersects
$S_{R\epsilon}^{2n-1}$ transversely for $|t|\le \epsilon$;
\item\label{it:mb2} the polar curves for the projection
$\ell=(z_1,z_2)$ meet $S_\epsilon$ in the part
$|z_1|=\epsilon$.
\end{enumerate}
\begin{proposition}
$\epsilon_0$ and $R$ as above exist.
\end{proposition}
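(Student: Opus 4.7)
The plan is to verify conditions \eqref{it:mb1} and \eqref{it:mb2} separately. Both hinge on the genericity of $z_1$ (and $z_2$) together with the curve selection lemma, applied to analytic curves whose tangent cones at $0$ are controllable.

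For \eqref{it:mb2}, the genericity of $\ell=(z_1,z_2)$ already guarantees that no tangent line to the polar curve $\Pi$ at $0$ lies in $\ker\ell$. Since $\Pi$ has only finitely many tangent lines at $0$, further genericity of $z_1$ alone ensures that none of these tangent lines is contained in the hyperplane $\{z_1=0\}$. Along each branch of $\Pi$, analytically parametrized by $\gamma(t)$ with $\gamma(0)=0$, the ratio $|z_1(\gamma(t))|/|\gamma(t)|$ therefore has a strictly positive limit as $t\to 0$. Taking the minimum over the finitely many branches produces a constant $C>0$ with $|(z_1,\dots,z_n)|\le C|z_1|$ on $\Pi$ near $0$. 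Any choice $R>C$ then forces every point of $\Pi\cap S_\epsilon$ to satisfy $|(z_1,\dots,z_n)|\le C\epsilon<R\epsilon$ and hence to lie on the vertical wall $\{|z_1|=\epsilon\}$ of $\partial B_\epsilon$ rather than on the spherical cap.

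For \eqref{it:mb1}, let $\Sigma\subset X\setminus\{0\}$ be the real semi-analytic locus of points $z$ at which $\zeta^{-1}(\zeta(z))$ fails to meet $S^{2n-1}_{|z|}$ transversely. By the curve selection lemma, every approach of $\Sigma$ to $0$ is realized by a real analytic arc $\gamma(s)$ with $\gamma(0)=0$, and along any such arc the quotient $|\zeta(\gamma(s))|/|\gamma(s)|$ admits a limit $c\in[0,1]$. The heart of the matter is to show $c>0$ for every such arc; granting this, one takes $c_0$ to be the infimum over the finitely many tangent half-directions of $\Sigma$ at $0$, and then any $R>2/c_0$ together with $\epsilon_0$ sufficiently small gives $\Sigma\cap\{|z_1|\le\epsilon,\,|z|=R\epsilon\}=\emptyset$, which is exactly the transversality required.

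The main obstacle I expect is the positivity of $c$ in \eqref{it:mb1}: it amounts to the statement that no tangent half-direction of $\Sigma$ at $0$ lies in $\{z_1=0\}$. This is a Bertini-type genericity statement about the linear form $z_1|_X$, and is essentially the input needed to construct Milnor tubes of Hamm--L\^e type for generic linear sections of a complex analytic germ. I would either invoke that result directly, or prove it in our setting by arguing that the tangent cone of $\Sigma$ at $0$ is a finite union of complex lines determined by $X$ alone, so a generic hyperplane $\{z_1=0\}$ misses them all; combined with the same genericity already used for \eqref{it:mb2}, this allows one choice of $z_1$ to handle both conditions simultaneously, and a common $R$ and $\epsilon_0$ then suffice.
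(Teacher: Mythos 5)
Your handling of condition \eqref{it:mb2} matches the paper's one-line argument (polar tangents are not in $\{z_1=0\}$, so the branches of $\Pi$ satisfy $|z|\le C|z_1|$ near $0$), modulo a harmless sign slip — you should take the maximum of $1/c$ over branches, not a minimum.

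For condition \eqref{it:mb1}, you have correctly identified the crux — that no tangent direction of the non-transversality locus $\Sigma$ at $0$ should lie in $\{z_1=0\}$ — but the justification you offer for it has a genuine gap. You propose to argue that ``the tangent cone of $\Sigma$ at $0$ is a finite union of complex lines determined by $X$ alone,'' after which a generic $z_1$ would miss them. This does not hold up: $\Sigma$ is defined using $z_1$ (through the condition that $T_pX\cap\{z_1=0\}$ be orthogonal to $p/|p|$), so both $\Sigma$ and its tangent cone vary with the choice of $z_1$; moreover $\Sigma$ is a real semi-analytic hypersurface inside $X$, and there is no reason its tangent cone should be a finite union of complex lines. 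Appealing to a Hamm--L\^e Milnor-tube result also does not directly give the assertion, because the object here is the corner ball $\{|z_1|\le\epsilon,\,|z|\le R\epsilon\}$ and what must be controlled is how $\Sigma$ approaches $0$ relative to $\{z_1=0\}$ for this specific $z_1$. The paper's proof avoids analysing $\Sigma$ globally: it applies the arc selection lemma to the set $S\subset X\times\R_+$ of pairs $(p,R)$ with $R|\zeta(p)|\le|p|$ and $T(p)$ holding, so that $R(t)\to\infty$ along the chosen arc forces $p(t)$ to be tangent at $0$ to a branch $C$ of $\zeta^{-1}(0)$. Since $z_1$ is generic, the tangent directions of $\zeta^{-1}(0)$ are not exceptional, so the limit $T$ of tangent planes along the arc is well defined and $L'=T\cap\{z_1=0\}$ is the tangent line to $C$; the real tangent $L$ of $p(t)$ lies both in $T$ and in $\{z_1=0\}$, hence in $L'$. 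But $T(p(t))$ says $T_{p(t)}X\cap\{z_1=0\}\perp p(t)/|p(t)|$, giving $L'\perp L$ in the limit, contradicting $L\subset L'$. The essential ingredient you are missing is precisely this use of the non-exceptionality of the tangent cone of $\zeta^{-1}(0)$, which is where the genericity of $z_1$ actually enters the proof of \eqref{it:mb1}.
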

\begin{proof}
  We can clearly achieve \eqref{it:mb2} by choosing $R$ sufficiently
  large, since the tangent lines to the polar curve are transverse to
  the hyperplane $z_1=0$ by genericity of $z_1$.

To see that we can achieve \eqref{it:mb1} note that for $p\in X$
and $t=\zeta(p)$ the sphere $S_{|p|}^{2n-1}$ is not transverse to
$\zeta^{-1}(t)$ at the point $p$ if and only if the intersection
$T_{p} X\cap \{z_1=0\}$ is orthogonal to the direction
$\frac{\Vec{p}}{|p|}$ (considering $T_{p} X$ as a subspace of $\C^n$). We
will say briefly that condition $T(p)$ holds.

We must show there exists $r>0$ and $R>0$ so that
$T(p)$ fails for all 
$p\in X$ with $R|\zeta(p)|\le |p|\le r$, 
since then $R$ and $\epsilon_0:=\frac rR$ do what is
required. Suppose the contrary. Then the set
$$S:=\{(p,R)\in X\times \R_+ :  R|\zeta(p)|\le |p|\text{ and }T(p)\text{
  holds}\}$$ contains points with $p$ arbitrarily close to $0$ and $R$
arbitrarily large. By the arc selection lemma for semi-algebraic sets there is an
analytic arc $\gamma\colon [0,1]\to X\times \R_+$,
$\gamma(t)=(p(t),R(t))$, with $\gamma((0,1])\subset S$ and $\lim_{t\to
  0} p(t)=0$ and $\lim_{t\to 0}R(t)=\infty$. This arc is then tangent
at $0$ to a component $C$ of the curve $\zeta^{-1}(0)$.  Let $T$
denote the limit of tangent planes $T_{p(t)}X$ as $t\to 0$. Then,
since the tangent cone to $\zeta^{-1}(0)$ includes no exceptional
directions, $L':=T\cap \{z_1=0\}=\lim_{t\to 0} (T_{p(t)}X\cap
\{z_1=0\})$ is the tangent line to the curve $C$. Let $L$ be the real
line tangent to the curve $p(t)$ at $0$. Then $L$ is in $T$, since its
direction is a limit of directions $p(t)/|p(t)|$, and $L$ is in
$z_1=0$ by the definition of $\gamma$. Therefore $L\subset L'$, which
contradicts that $T(p)$ holds along the curve $p(t)$.
\end{proof}

\section{The thin pieces}\label{sec:thin}
In this section we prove the thinness of the pieces $Z_j$ defined
in section \ref{sec:thick-thin}.
We start with a proof of  Proposition \ref{prop:thin nh}, which states that a semi-algebraic germ $(Z,0)$ is contained in a horn neighborhood of its tangent cone.

\begin{proof}
  Without loss of generality $Z$ is closed. Consider the function $f\colon \epsilon \mapsto \max\{d(x, T Z \cap
  S_{\epsilon} ):x\in Z \cap S_{\epsilon}\}$. Since $f$
  is semi-algebraic, there exists $c>0$ and a Lojasiewicz exponent $q \geq 1$
  such that $f(\epsilon) \leq c \epsilon^q$ for all $\epsilon$ sufficiently small.  The tangent cone $T Z$ of
  $Z$ is the cone over the Hausdorff limit $\lim_{\epsilon\to
    0}(\frac{1}{\epsilon} Z \cap S_1)$. Thus for any $C>0$ the
  function $f$ satisfies $f(\epsilon) < C \epsilon$ for $\epsilon$
  sufficiently small ($d$ denotes the Hausdorff distance). This implies $q>1$ and proves  the proposition.
\end{proof}

\begin{proposition}\label{prop:thin1}~\hbox{}  
   \begin{enumerate}
   \item\label{it:EX1} For each $j$, the tangent cone of $(Z_j,0)$ at
     $0$ is an exceptional tangent line $L_j$ of $(X,0)$.
  
   Let  $q_j >1$ such that $Z_j\cap
  B_\epsilon$ is contained in a $q_j$-horn neighborhood
  of the line $L_j$. 
    \item\label{it:EX2}
The  restriction $\zeta_j\colon Z_j\setminus\{0\}\to
  D^2_\epsilon\setminus\{0\}$ of $z_1$ is a locally trivial fibration and there exists $c_j >0$ such that 
  such that each fiber $\zeta_j^{-1}(t)$ lies in a ball with radius 
  $(c_jt^{q_j})$ centered at the point $L_j\cap\{z_1=t\}$
  (we call these fibers the \emph{Milnor fibers of $Z_j$}).
\item\label{it:EX3} There is a vector field $v_j$ on
  $Z_j\setminus\{0\}$ which lifts by $\zeta_j$ the inward radial unit
  vector field on $\C\setminus \{0\}$ and has the property that any
  two integral curves for $v_j$ starting at points of $Z_j$ with the
  same $z_1$ coordinate approach each other faster than linearly. In
  particular, the flow along this vector field takes Milnor fibers to
  Milnor fibers while shrinking them faster than linearly.
\end{enumerate}
\end{proposition}

Note that it follows from part \eqref{it:EX1} of this proposition
that, with our choice of Milnor balls as in section \ref{sec:balls},
the link $Z_j^{(\epsilon)}=Z_j\cap S_\epsilon$ is included in the
$|z_1|=\epsilon$ part of $S_\epsilon$.

We need the following lemma. For any $h$ in the maximal ideal $\mathfrak m_{X,0}$ 
set  $\widetilde h:= h\circ\pi\colon\widetilde X\to \C$ and denote by 
$m_\nu(h)$ the multiplicity of $\widetilde h$ along the
exceptional curve $E_\nu$.

\begin{lemma}\label{le:1} Let $h_1=\zeta=z_1|_X$.
  For any Tjurina component there exist functions $h_2,\dots, h_m\in
\mathfrak m_{X,0}$ such that $h_1,\dots, h_m$ generate
  $\mathfrak m_{X,0}$, and such that for any vertex $\nu$ of the Tjurina
  component we have $m_{\nu}(h_i)>m_{\nu}(h_ 1)$ for $i>1$.
\end{lemma}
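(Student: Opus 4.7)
The plan is to show that the $h_i$ for $i>1$ can actually be chosen as linear forms, specifically as a basis of a codimension-one subspace of the $n$-dimensional space of linear forms on $\C^n$. The key observation is that generators of $\frak m_{X,0}$ with extra vanishing along a Tjurina component are governed by sections of a specific line bundle on the reduced exceptional divisor supported on that component, and this line bundle turns out to be trivial.

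Concretely, let $D=\sum_\nu m_\nu(h_1)E_\nu$ be the divisor on $\widetilde X$ characterized by $\frak m_{X,0}\cdot\O_{\widetilde X}=\O_{\widetilde X}(-D)$; this exists by property \eqref{r2}, and it implies $m_\nu(h)\ge m_\nu(h_1)$ for every $h\in\frak m_{X,0}$, with equality for generic linear $h$. Let $F:=\bigcup_{\nu\in\Gamma_j}E_\nu$ with its reduced scheme structure, so $F$ is a connected reduced proper curve. I would consider the natural $\C$-linear map
\[
  \phi\colon \bigl(\text{space of linear forms on } \C^n\bigr)\longrightarrow H^0\bigl(F,\O_F(-D)\bigr),\qquad \phi(h)=(\pi^*h)|_F,
\]
whose kernel is precisely the set of linear forms whose pullback vanishes along every $E_\nu\subset F$ to order strictly greater than $m_\nu$.

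The first key step is to show that $\phi(h_1)$ is nowhere vanishing on $F$: as a section of $\O_{\widetilde X}(-D)$, $\pi^*h_1$ has zero divisor equal to $H^*$, the strict transform of $\{h_1=0\}$; but by definition $H^*$ meets only \L-curves, while $\Gamma_j$ contains no \L-nodes, so $H^*\cap F=\emptyset$. This trivializes $\O_F(-D)$, giving $H^0(F,\O_F(-D))\cong H^0(F,\O_F)=\C$ (using that $F$ is connected, reduced and proper over $\C$). Hence $\phi$ surjects onto $\C$, so $V_\Gamma:=\ker\phi$ has codimension $1$. Choosing any basis $h_2,\dots,h_n$ of $V_\Gamma$, the tuple $h_1,\dots,h_n$ is a basis of the full space of linear forms (since $h_1\notin V_\Gamma$), so the restrictions $h_1|_X,\dots,h_n|_X$ generate $\frak m_{X,0}$ as an ideal; by construction $m_\nu(h_i)>m_\nu(h_1)$ for every $i\ge 2$ and every $\nu\in\Gamma_j$, as desired.

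The one nontrivial step is the triviality of $\O_F(-D)$ on the Tjurina divisor, whose heart is the geometric fact that the strict transform of a generic hyperplane section avoids all Tjurina components. Since this is essentially a restatement of the definitions of \L-curves and Tjurina components (the latter being obtained precisely by removing the \L-nodes), no serious technical obstacle is anticipated.
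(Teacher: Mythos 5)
Your proof is correct, but it takes a genuinely different and more conceptual route than the paper's. The paper argues locally: starting from arbitrary generators $g_2,\dots,g_m$, it fixes a vertex $\nu$ of the Tjurina component, chooses a point $p\in E_\nu$ away from double points and strict transforms, replaces $g_i$ by $h_i=g_i-\frac{a_i}{a_1}h_1$ to kill the constant term of $\widetilde g_i/u^{m_\nu(h_1)}$ at $p$, and then invokes negative definiteness of the intersection matrix of the Tjurina component to force $m_\nu(h_i)>m_\nu(h_1)$; finally it propagates the inequality to adjacent vertices by observing that the meromorphic function $\widetilde h_1/\widetilde h_i$ on $E_\mu$ would otherwise have a pole but no zero. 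Your argument replaces this two-stage local computation with a single global one: the maximal-ideal sheaf $\O_{\widetilde X}(-D)$ restricts trivially to the reduced Tjurina divisor $F$ (since the zero divisor of $\pi^*h_1$ as a section is the strict transform $H^*$, which by definition of \L-curves misses $F$), and the connectedness and reducedness of $F$ give $H^0(F,\O_F(-D))\cong\C$, so the evaluation map from linear forms has a codimension-one kernel consisting of exactly those forms with higher multiplicity along every $E_\nu$ in the component. Your approach buys a cleaner statement with no need to propagate vertex by vertex, and it makes explicit the useful bonus that the $h_i$ can be taken to be linear forms (which the paper uses implicitly later, e.g.\ in the proof of part (2) of Lemma 6.1). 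The paper's approach is more elementary, requiring no line-bundle or cohomological language, only the negative definiteness of the intersection form. One small point you elide: the identification $D=\sum m_\nu(h_1)E_\nu$ with the maximal-ideal cycle requires $h_1=z_1$ to be generic so that $m_\nu(h_1)$ is the minimal multiplicity along each $E_\nu$; you note this parenthetically (``with equality for generic linear $h$'') and it is indeed part of the standing hypotheses, so this is not a gap, just worth saying explicitly.
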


\begin{proof}
  Take any functions $g_2,\dots,g_m\in \mathfrak m_{X,0}$ such that $h_1$ and
  $g_2,\dots, g_m$ generate $\mathfrak m_{X,0}$.  Choose a vertex $\nu$ of the
  Tjurina component. By choice of $z_1$ we know $m_{\nu} (h_1)\le
  m_{\nu}(g_ i)$ for all $i>1$.  Choose a point $p$ on $E_\nu$
  distinct from the intersections with other exceptional curves and
  the strict transform of $g_i^{-1}(0)$.  Then $\widetilde h_1$ and
  $\widetilde g_i$ are given in local coordinates $(u,v)$ centered at
  $p$ by $\widetilde h_1=u^{m_{\nu} (h_1)}(a_1+\alpha_1(u,v))$ and
  $\widetilde g_i=u^{m_{\nu}(h_1)}(a_i+\alpha_i(u,v))$ where $a_1\ne
  0$ and $\alpha_1(0,0)=\alpha_i(0,0)= 0$. Let
  $h_i:=g_i-\frac{a_i}{a_1}h_1$ for $i=2,\dots,m$. Then
  $h_1,\dots,h_m$ generate $\mathfrak m_{X,0}$. 
  
  Assume that $m_{\nu}(h_i) = m_{\nu}(h_1)$.  Then the strict
  transform of $h_i^{-1}(0)$ passes through $p$. Let
  $E_{\nu_1},\ldots,E_{\nu_r}$ be the exceptional curves representing
  the vertices of our Tjurina component $\Gamma' \subset \Gamma$, with
  $\nu_1=\nu$. Let $I = \big( E_{\nu_k}. E_{\nu_l} \big)_{1\leq
    k,l\leq r}$ be the intersection matrix associated with $\Gamma'$
  and consider the $r$-vectors
  $$V={}^t(v_1,\dots, v_r),\quad 
  B={}^t(b_1,\ldots,b_r)\quad\text{defined by}$$
  $$v_k=m_{\nu_k}(h_i)- m_{\nu_k}(h_1),\quad 
  b_k=h_i^*.E_{\nu_k} - h_1^*.E_{\nu_k} + \sum_{\mu \in{ \cal
      L_{\nu_k}}} (m_{\mu}(h_i)- m_{\mu}(h_1))\,,
  $$ 
  where $\cal L_{\nu_k}$ denotes the set of \L-nodes of $\Gamma$
  adjacent to $\nu_k$.  Since $h_1^* . E_{\nu_k} = 0$ for all $k$ and
  $h_i^* . E_{\nu_1} \neq 0$, we have $b_k \geq 0$ for all $k$ and
  $b_1 >0$. Now $I.V+ B =0$, so $V=-I^{-1}B$.  All entries of $I^{-1}$
  are strictly negative, so all entries of $V$ are strictly positive,
  contradicting $m_{\nu}(h_i)= m_{\nu}(h_1)$. So in fact
  $m_{\nu}(h_i)>m_{\nu}(h_1)$.

  We now claim that $m_{\mu}(h_i)>m_{\mu}(h_1)$ for $i>1$ for any
  vertex $\mu$ of the Tjurina component adjacent to $\nu$ (it then
  follows inductively for every vertex of the Tjurina component).  So
  let $\mu$ be such a vertex and assume that $m_{\mu}(h_i)
  =m_{\mu}(h_1)$. Consider the meromorphic function $\widetilde
  h_1/\widetilde h_i$ on $E_\mu$. It takes finite values almost
  everywhere and has a pole at $E_\mu\cap E_\nu$ so it must have a
  zero at some point of $E_\mu$. This cannot happen since
  $m_{\nu'}(h_1)\le m_{\nu'}(h_i)$ for any $\nu'$ and the strict
  transform of the zero set of $h_1$ only intersects the \L-nodes
  ($h_1$ is the generic linear form).
\end{proof}
\begin{proof}[Proof of Proposition \ref{prop:thin1}]  Let
  $h_1,\dots,h_m$ be as in Lemma
  \ref{le:1} and let 
  $$q_j:=\min\left\{2,\frac{m_{\nu}(h_i)}{m_{\nu}(h_1)}:\nu\in\Gamma_j,
    i>1\right\}\,.$$ 
For each $k=2,\dots,n$ one has $$z_k|_X=\lambda_k h_1+\beta_k$$ with
$\lambda_k\in\C$ and $\beta_k\in(h_1^2, h_2,\dots,h_m)$. 

We will prove that the complex line $L_j \subset \C^n$ parametrized by
$(t,\lambda_2t,\dots, \lambda_nt)$, $t\in\C$ is the tangent cone to
$Z_j$. Since $Z_j$ contains complex curves (for example the projection
$\pi(\gamma)$ of any curvette $\gamma$ of the exceptional divisor
inside $\cal N(\Gamma_j)$), the tangent cone $TZ_j$ contains a complex
line. Therefore it suffices to prove that $Z_j \cap B_{\epsilon_0}$
is contained in a horn neighborhood of $L_j$.

%Given two maps $f, g \colon \C^n \to \C$ such that $f(0) = g(0)=0$, we
%write $f =O(g)$ if the quotient $|f/g|$ admits a limit in $\R$ at $
%0$.

Consider local coordinates $(u,v)$ in a compact neighborhood of a
point of $E_\nu\setminus \bigcup_{\mu\ne\nu}E_\mu$ in which $\widetilde
\beta_i=u^{m_{\nu}(\beta_i)}(a_i+\alpha_i(u,v))$ with $a_i\ne0$ and
$\alpha_i$ holomorphic. In this neighborhood $|\widetilde
z_i-\lambda_i\widetilde z_1|=O(|\widetilde
z_1|^{m_{\nu}(\beta_i)/m_{\nu}(h_ 1)})=O(|\widetilde z_1|^{q_j})$.

In a neighborhood of a point $E_\nu\cap E_\mu$ of intersection of two
exceptional curves of the Tjurina component we have local coordinates
$u,v$ such that $\widetilde h_1=u^{m_{\mu}(h_1)}v^{m_{\nu}(h_1)}$ and
$\widetilde
\beta_i=u^{m_{\mu}(\beta_i)}v^{m_{\nu}(\beta_i)}(a_i+\alpha_i(u,v))$
with $a_i\ne0$ and $\alpha_i$ holomorphic.  In this neighborhood we
again have $|\widetilde z_i-\lambda_i\widetilde z_1|=O(|\widetilde z_1|^{q_j})$.

By compactness of $\Nn(\Gamma_j)$ the estimate $|\widetilde z_i-\lambda_i
\widetilde z_1|=O(|\widetilde z_1|^{q_j})$ holds on all of $\Nn(\Gamma_j)$.
Thus, if we define $f_j\colon
Z_j=\pi(\Nn(\Gamma_j))\to \C^n$ by $f_j(p):=
z_1(p)(1,\lambda_2,\dots,\lambda_n)$, we have shown that
$|p-f_j(p)|=O(z_1(p)^{q_j})$.  In particular, for $p\in \Nn(\Gamma_j)$
the distance of $\pi(p)$ from $0$ is $O(|z_1(p)|)$, and therefore
$Z_j\cap B_\epsilon$ is contained in an
$(c_j\epsilon^{q_j})$-neighborhood of the line $L_j$ for some
$c_j>0$.

In particular, we see that $Z_j$ is thin, and tangent to the line
$L_j$. 
Thus each limit of tangent hyperplanes at a sequence of points
converging to $0$ in $Z_j\setminus\{0\}$ contains $L_j$.  We will
show there exist infinitely many such limits, which, by
definition, means that $L_j$ is an exceptional tangent line (this
is one of the two implications of Proposition 6.3
of Snoussi \cite{Sn}, see also Proposition 2.2.1 of \cite{LT},
but we did not find a clear statement in the literature).
Indeed, for any generic $(n-2)$-plane $H$ let $\ell_H\colon
\C^n\to\C^2$ be the projection with kernel $H$. By Lemma
\ref{le:snoussi} the strict transform $\Pi^*$ of the
polar $\Pi$ of $\ell_H|_X$ intersects
$\bigcup_{\nu\in\Gamma_j}E_\nu$. Let $C$ be a branch of $\Pi^*$
which intersects $\bigcup_{\nu\in\Gamma_j}E_\nu$. Then at each
$p\in C$ the tangent plane to $X$ at $p$ intersects $H$
non-trivially, so the limit of these planes as $p\to 0$ in $C$ is
a plane containing $L_j$ which intersects $H$ non-trivially. By
varying $H$ we see that there are infinitely many such limits
along sequences of points approaching $0$ along curves tangent to
$L_j$, as desired.

\eqref{it:EX2} of Proposition \ref{prop:thin1} is just the observation that
$\zeta_j$ is the restriction to $Z_j\setminus\{0\}$ of the Milnor-L\^e
fibration for $z_1\colon X\to \C$.
 
For \eqref{it:EX3} we will use local coordinates as above to construct the
desired vector field locally in $\pi^{-1}(Z_j\setminus\{0\})$; it can
then be glued together by a standard partition of unity
argument. Specifically, in a neighborhood of a point $p$ of
$E_\nu\setminus \bigcup_{\mu\ne\nu}E_\mu$, using coordinates
$(u=r_ue^{i\theta_u}, v=r_ve^{i\theta_v})$ with $\widetilde
h_1=u^{m_\nu(h_1)}$, the vector field
$$(\frac1{mr_u^{m-1}}\frac\partial{\partial r_u}\,,\,0)$$ with
$m=m_\nu(h_1)$ works, while in a neighborhood of a point $E_\nu\cap
E_\mu$, using coordinates with $\widetilde
h_1=u^{m_\nu(h_1)}v^{m_\mu(h_1)}$, the vector field
$$(\frac1{%Fix a latex bug
\hbox{$mr_u^{m-1}r_v^{m'}$}}\frac\partial{\strut\partial r_u}\,,\,
\frac1{\hbox{$m'r_u^{m}r_v^{m'-1}$}}\frac\partial{\strut\partial r_v})$$ 
with
$m=m_\nu(h_1)$ and $m'=m_\mu(h_1)$ works.
\end{proof}

\section{The thick pieces}\label{sec:thick}

 In this section we prove the thickness of the pieces
  $Y_i$, $i=1\ldots,r$ defined in Section \ref{sec:thick-thin}.
  Recall that any such $Y_i$ has the form
    $Y=\pi(N(\Gamma_\nu))$, where $\Gamma$ is the dual graph of
    the resolution defined at the beginning of Section
    \ref{sec:thick-thin} and $\Gamma_{\nu}$ is a subgraph consisting
    of an \L-node $\nu$ of $\Gamma$ and any attached bamboos.

\begin{proposition}\label{prop:thick}  
    $Y=\pi(N(\Gamma_\nu))$ is thick.
\end{proposition}
 
\begin{proof}
  We use the minimal resolution $\pi' \colon \tilde X' \to X$ which
  factors both through $\pi$ and through Nash modification.  We denote
  by $\Gamma'$ its resolution graph and by $\sigma \colon \tilde X'
  \to \tilde X$ the map such that $\pi'=\sigma \circ \pi$. We then
  have $Y=\pi'(N(\Gamma'_\nu))$ where $\Gamma'_\nu$ is the subgraph of
  $\Gamma'$ which projects on $\Gamma_\nu$ when blowing-down through
  $\sigma$.  In particular, notice that if $G$ is a maximal connected
  subgraph in $\Gamma'_\nu \setminus \nu$, the link of $\pi'(\Nn(G))$
  is a solid torus.

  The thickness of $Y$ will follow from Lemma \ref{le:2} below,
  of which part \eqref{lem2} is the most difficult. The proof of
  \eqref{lem2} uses the techniques introduced in Section
  \ref{sec:carrousel}, and will be completed in section
  \ref{sec:carrousel1} (see the beginning of the proof of Lemma
  \ref{le:well reduced}).

\begin{lemma}\label{le:2}For any \L-node $\nu$ of $\Gamma'$ we have
    that
  \begin{enumerate}
  \item\label{lem1} $ {\pi'}(\Nn(\nu))$ is metrically conical;
  \item\label{lem2} $\pi(N(G))$ is conical for any maximal connected
    subgraph $G$ of $\Gamma'_{\nu} \setminus \nu$;
  \item\label{lem3}  $ {\pi}(N(E_\nu))$ is thick. 
\end{enumerate}
\end{lemma}

Assume we have the lemma.  The conicalness of the union of the conical
piece $ {\pi'}(\Nn(\nu))$ with the conical pieces ${\pi'}(N(G))$
coming from the maximal subgraphs $G$ of $\Gamma'_{\nu} \setminus \nu$
follows from \cite[Corollary 0.2]{valette} and part \eqref{lem3} of
the lemma then completes the proof that $Y$ is thick.
 \end{proof}

We now prove parts \eqref{lem1} and \eqref{lem3} of Lemma
\ref{le:2}. As mentioned above, part \eqref{lem2} will be proved later.

\begin{proof}[Proof of part \eqref{lem1} of Lemma \ref{le:2}]
  As before, $\ell=(z_1,z_2)\colon B_{\epsilon_0} \to (\C^2,0)$ is a
  generic linear projection to $\C^2$ and $\Pi$ the polar curve for
  this projection.  From now on we work only inside of a Milnor ball
  $B_{\epsilon_0}$ as defined in Section \ref{sec:balls} and $X$ now
  means $X\cap B_{\epsilon_0}$.

Denote by
$\Delta=\Delta_1\cup\dots\cup\Delta_k\subset \C^2$ the decomposition
of the discriminant curve $\Delta$ for $\ell$ into its irreducible
components. By genericity, we can assume the tangent lines in $\C^2$
to the $\Delta_i$ are of the form $z_2=b_iz_1$. Set
$$V_i:=\{(z_1,z_2)\in \C^2: |z_1|\le \epsilon_0,
|z_2-b_iz_1|\le\eta |z_1|\}\,,$$ 
where we choose $\eta$ small enough that
$V_i\cap V_j=\{0\}$ if $b_i\ne b_j$ and $\epsilon_0$ small enough that
$\Delta_i\cap\{(z_1,z_2):|z_1|\le \epsilon_0\}\subseteq V_i$.  Notice
that if two $\Delta_i$'s are tangent then the corresponding $V_i$'s
are the same.

Let $W_{i1},\dots,W_{ij_i}$ be the closure of the connected components
of $X\cap \ell^{-1}(V_i\setminus\{0\})$ which contain components of
the polar curve. Then the restriction of $\ell$ to
$\overline{X\setminus \bigcup W_{ij}}$ is a bilipschitz local
homeomorphism by Proposition \ref{prop:polar wedge}. In
particular, this set is metrically conical.

Assume that the strict transform by ${\pi}'$ of a component $\Pi_0$ of
$\Pi$ intersects the \L-node $E_{\nu}$.  By definition of ${\pi}'$, $p
= \Pi^*_0 \cap E_{\nu}$ is a smooth point of the exceptional divisor
$E= \pi'^{-1}(0)$ and $p$ is not a base point of the family of polar
curves of generic plane projections.  So varying the plane projection
varies locally the intersection point of $E_\nu$ with the strict
transform of the polar component, and hence also the tangent line of
the $\Pi_{0}$. We call $\Pi_0$ a \emph{moving polar component}. The
contact exponent of this moving polar component is $s=1$, so we can
take the corresponding $W_{ij}$ containing $\Pi_0$ to be the set $A_0$
of Proposition \ref{prop:polar wedge}.\eqref{it:LZ2} and it follows
that $W_{ij}$ is metrically conical.  Hence, if $W$ denotes the union
of $W_{ij}$ such that $\Pi \cap W_{ij}$ is not a moving polar
component, the semi-algebraic set $\overline{X \setminus W}$ is
metrically conical.  Since the coordinates at the double points of $E$
on an \L-curve $E_\nu$ can be chosen so that $\Nn(\nu)$ is a connected
component of $\pi^{-1}(\overline{X \setminus W})$, part \eqref{lem1}
of the lemma is proved.
\end{proof}
 
\begin{proof}[Proof of part \eqref{lem3}]
  Let $\mu$ an adjacent vertex to our \L-node $\nu$. We must show that
  the conical structure we have just proved can be extended to a thick
  structure over $N(E_\nu)\cap N(E_\mu)$. Take local coordinates $u,v$
  at the intersection of $E_\nu$ and $E_\mu$ such that $u=0$ resp.\
  $v=0$ is a local equation for $E_\nu$ resp.\ $E_\mu$.

  By Lemma \ref{le:1} we can choose $c \in \C$ so that the linear form
  $h_2 = z_1 -c z_2$ satisfies $m_{\mu}(z_1) < m_{\mu}(h_2)$. We 
  change coordinates to replace $z_2$ by $h_2$, which does not change
  the linear projection $\ell$, so we still have that
  $m_{\nu}(z_1)=m_{\nu}(h_2)$. So we may assume that in our local coordinates
  $z_1=u^{m_{\nu}(z_1)}v^{m_{\mu}(z_1)}$ and
  $z_2=u^{m_{\nu}(z_1)}v^{m_{\mu}(h_2)}(a+g(u,v))$ where $a\in \C^*$ and
  $g(0,0)=0$. To higher order, the lines $v=c$ for $c\in \C$ project
  onto radial lines of the form $z_2=c'z_1$ and the sets $|u|=d$ with
  $d\in \R^+$ project to horn-shaped real hypersurfaces of the form
  $|z_2|=d'|z_1|^{m_{\mu}(h_2)/m_{\mu}(z_1)}$. 
  \begin{figure}[ht]
 \centering
\begin{tikzpicture} 
\begin{scope}[scale=0.7]
(-1.2,-2.5)--(-1.2,-1.2)--(1.2,-1.2)--(1.2,-2.5)--cycle;
(-1.2,1.2)--(-1.2,2.5)--(1.2,2.5)--(1.2,1.2)--cycle;

\path[fill, lightgray](-1.2,-1.2)--(-1.2,1.2)--(1.2,1.2)--(1.2,-1.2)--(-1.2,-1.2);
\draw[thick,>=stealth,->](-2.5,0)--(2.5,0);
\draw[thick,>=stealth,->](0,-2.5)--(0,2.5);
\draw[thin](1.2,2.5)--(1.2,-2.5);
\draw[thin](-1.2,2.5)--(-1.2,-2.5);
\draw[thin](-2.5,1.2)--(2.5,1.2);
\draw[thin](-2.5,-1.2)--(2.5,-1.2);
\draw[thin,gray](-1,-1.2)--(-1,1.2);
\draw[thin,gray](-0.8,-1.2)--(-0.8,1.2);
\draw[thin,gray](-0.6,-1.2)--(-0.6,1.2);
\draw[thin,gray](-0.4,-1.2)--(-0.4,1.2);
\draw[thin,gray](-0.2,-1.2)--(-0.2,1.2);

\draw[thin,gray](1,-1.2)--(1,1.2);
\draw[thin,gray](0.8,-1.2)--(0.8,1.2);
\draw[thin,gray](0.6,-1.2)--(0.6,1.2);
\draw[thin,gray](0.4,-1.2)--(0.4,1.2);
\draw[thin,gray](0.2,-1.2)--(0.2,1.2);

\draw[thin,gray](-1.4,-1.2)--(-1.4,1.2);
\draw[thin,gray](-1.6,-1.2)--(-1.6,1.2);
\draw[thin,gray](-1.8,-1.2)--(-1.8,1.2);
\draw[thin,gray](-2,-1.2)--(-2,1.2);
\draw[thin,gray](-2.2,-1.2)--(-2.2,1.2);

\draw[thin,gray](1.4,-1.2)--(1.4,1.2);
\draw[thin,gray](1.6,-1.2)--(1.6,1.2);
\draw[thin,gray](1.8,-1.2)--(1.8,1.2);
\draw[thin,gray](2,-1.2)--(2,1.2);
\draw[thin,gray](2.2,-1.2)--(2.2,1.2);

\node(a)at(0.3,2.4){u};
\node(a)at(2.5,0.25){v};
\node(a)at(-0.5,-2.8){${\cal N}(E_\mu)$};
\node(a)at(-3,-0.5){${\cal N}(E_\nu)$};

\draw[very thick,>=stealth,->](3.5,0)--(5.5,0);
\node(a)at(4.5,0.4){$\pi$};

\begin{scope}[xshift=8cm]

\node(a)at(90:2.8){$\pi({\cal N}(E_\mu))$};
\node(a)at(0:2.5){$\pi({\cal N}(E_\nu))$};

\path[fill, lightgray] (0,0)..controls (0,0.5) and (0,1.5)..(-1.5,2.5)--cycle;
\path[fill, lightgray] (0,0)..controls (0,0.5) and (0,1.5)..(1.5,2.5);
\path[fill, lightgray] (0,0)..controls (0,-0.5) and (0,-1.5)..(1.5,-2.5);
\path[fill, lightgray] (0,0)..controls (0,-0.5) and (0,-1.5)..(-1.5,-2.5);

\draw[thin,gray](0:0)--(56:3);
\draw[thin,gray](0:0)--(53:3);
\draw[thin,gray](0:0)--(50:3);
\draw[thin,gray](0:0)--(47:3);
\draw[thin,gray](0:0)--(44:3);
\draw[thin,gray](0:0)--(41:3);

\draw[thin,gray](0:0)--(-56:3);
\draw[thin,gray](0:0)--(-53:3);
\draw[thin,gray](0:0)--(-50:3);
\draw[thin,gray](0:0)--(-47:3);
\draw[thin,gray](0:0)--(-44:3);
\draw[thin,gray](0:0)--(-41:3);

\draw[thin,gray](0:0)--(180-56:3);
\draw[thin,gray](0:0)--(180-53:3);
\draw[thin,gray](0:0)--(180-50:3);
\draw[thin,gray](0:0)--(180-47:3);
\draw[thin,gray](0:0)--(180-44:3);
\draw[thin,gray](0:0)--(180-41:3);

\draw[thin,gray](0:0)--(56-180:3);
\draw[thin,gray](0:0)--(53-180:3);
\draw[thin,gray](0:0)--(50-180:3);
\draw[thin,gray](0:0)--(47-180:3);
\draw[thin,gray](0:0)--(44-180:3);
\draw[thin,gray](0:0)--(41-180:3);

\draw[thin,gray](0:0)--(65:2.3);
\draw[thin,gray](0:0)--(68:2.05);
\draw[thin,gray](0:0)--(71:1.8);
\draw[thin,gray](0:0)--(74:1.5);
\draw[thin,gray](0:0)--(77:1.3);
\draw[thin,gray](0:0)--(80:1.05);
    
\draw[thin,gray](0:0)--(-62:2.6);
\draw[thin,gray](0:0)--(-65:2.3);
\draw[thin,gray](0:0)--(-68:2.05);
\draw[thin,gray](0:0)--(-71:1.8);
\draw[thin,gray](0:0)--(-74:1.5);
\draw[thin,gray](0:0)--(-77:1.3);
\draw[thin,gray](0:0)--(-80:1.05);
    
\draw[thin,gray](0:0)--(180-62:2.6);
\draw[thin,gray](0:0)--(180-65:2.3);
\draw[thin,gray](0:0)--(180-68:2.05);
\draw[thin,gray](0:0)--(180-71:1.8);
\draw[thin,gray](0:0)--(180-74:1.5);
\draw[thin,gray](0:0)--(180-77:1.3);
\draw[thin,gray](0:0)--(180-80:1.05);
       
\draw[thin,gray](0:0)--(62-180:2.6);
\draw[thin,gray](0:0)--(65-180:2.3);
\draw[thin,gray](0:0)--(68-180:2.05);
\draw[thin,gray](0:0)--(71-180:1.8);
\draw[thin,gray](0:0)--(74-180:1.5);
\draw[thin,gray](0:0)--(77-180:1.3);
\draw[thin,gray](0:0)--(80-180:1.05);
       
\draw(0,0)..controls (0,0.5) and (0,1.5)..(-1.5,2.5);
\draw(0,0)..controls (0,0.5) and (0,1.5)..(1.5,2.5);
\draw(0,0)..controls (0,-0.5) and (0,-1.5)..(1.5,-2.5);
\draw(0,0)..controls (0,-0.5) and (0,-1.5)..(-1.5,-2.5);
\draw[thin](1.5,-2.5)--(-1.5,2.5);
\draw[thin](1.5,2.5)--(-1.5,-2.5);
 
\draw[thin,gray](0:0)--(62:2.6);
\draw[thin,gray](0:0)--(65:2.3);
\draw[thin,gray](0:0)--(68:2.05);
\draw[thin,gray](0:0)--(71:1.8);
\draw[thin,gray](0:0)--(74:1.5);
\draw[thin,gray](0:0)--(77:1.3);
\draw[thin,gray](0:0)--(80:1.1);
\end{scope}
\end{scope}
\end{tikzpicture}
    \caption{Thickness near the boundary of a thick piece}
    \label{fig:6.1(2)}
\end{figure}
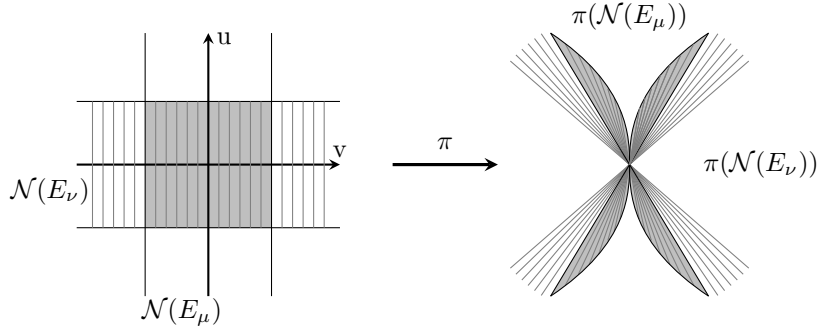
  In particular the image of a small domain of the form
  $\{(u,v):|u|\le d_1, |v|\le d_2\}$ is thick.  See Figure
  \ref{fig:6.1(2)} for a schematic real picture.  By Proposition
  \ref{le:very thin} the   local bilipschitz constant remains bounded
  in this added region, so the desired result again follows by pulling
  back the standard conical structure from $\C^2$.
\end{proof}

\section{Fast loops}\label{fast loops}

We recall the definition of a ``fast loop'' in the sense of
\cite{BFN1}, which we will here call sometimes ``fast loop of the
first kind,'' since we also define a closely related concept of ``fast
loop of the second kind.''  We first need another definition.

\begin{definition}
  If $M$ is a compact Riemannian manifold and $\gamma$ a closed
  rectifiable null-homotopic curve in $M$, the \emph{isoperimetric
 ratio} for $\gamma$ is the infimum of areas of singular disks in $M$
  which $\gamma$ bounds, divided by the square of the length of
  $\gamma$.
\end{definition}

\begin{definition} \label{def:fast loop} Let $\gamma$ be a closed
  curve in the link $X^{(\epsilon_0)}=X\cap S_{\epsilon_0}$. Suppose
  there exists a continuous family of loops $\gamma_\epsilon\colon
  S^1\to X^{(\epsilon)}$, $\epsilon\le \epsilon_0$, whose lengths
  shrink faster than linearly in $\epsilon$ and with
  $\gamma_{\epsilon_0}=\gamma$. If $\gamma$ is homotopically
  nontrivial in $X^{(\epsilon_0)}$ we call the family
  $\{\gamma_\epsilon\}_{0<\epsilon\le \epsilon_0}$ a \emph{fast loop
    of the first kind} or simply a \emph{fast loop}. If $\gamma$ is
  homotopically trivial but the isoperimetric ratio of
  $\gamma_\epsilon$ tends to $\infty$ as $\epsilon\to 0$ we call the
  family $\{\gamma_\epsilon\}_{0<\epsilon\le \epsilon_0}$ a \emph{fast
    loop of the second kind}\footnote{Fast loops of the
    second kind were needed in an early version of this paper but not
    in the current version. We have retained them since their analogs
    are useful in higher dimension.}.
\end{definition}

\begin{proposition}\label{prop:fast loops}
  The existence of a fast loop of the first or second kind is an
  obstruction to the metric conicalness of $(X,0)$.
\end{proposition}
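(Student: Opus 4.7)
The plan is to assume $(X,0)$ is metrically conical, fix a $K$-bilipschitz homeomorphism $\phi\colon (X\cap B_{\epsilon_0},0)\to (C(L),0)$ where $C(L)$ is the metric cone on the link $L:=X^{(\epsilon_0)}$ with its metric $dr^2+r^2 g_L$, and derive a contradiction separately for each kind of fast loop. Under $\phi$, lengths and areas distort by factors of at most $K$ and $K^2$ respectively, and inner distances from $0$ distort by at most $K$, so it suffices to prove the relevant bounds in $C(L)$ and pull them back.

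First I would handle fast loops of the first kind. Let $\sigma>0$ be the systole of $L$, which is positive by compactness. The radial projection $\rho_r\colon C(L)\cap\{r'\ge r\}\to L\times\{r\}$ is $1$-Lipschitz (it kills the radial component and scales angular distances by the factor $r/r'\le 1$) and is a deformation retraction, so it preserves the homotopy class in $\pi_1(L)\cong\pi_1(C(L)\setminus\{0\})$. Hence any loop in $C(L)$ at radial distance $\ge r$ that is nontrivial in $\pi_1(L)$ has length at least $r\sigma$. Applied to $\phi\circ\gamma_\epsilon$, which remains nontrivial in $\pi_1(L)$ and lies at radial distance $\ge\epsilon/K$, this forces $\mathrm{length}(\gamma_\epsilon)\ge \epsilon\sigma/K^2$, contradicting the faster-than-linear shrinking of lengths.

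For fast loops of the second kind, the plan is to prove a uniform quadratic isoperimetric inequality on $C(L)$ and then transport it back. Given a loop $\tilde\gamma$ in $C(L)$ of length $\ell$ at radial distance $r$, I would split into two regimes. If $\ell\ge \sigma r$, then the straight-line cone of $\tilde\gamma$ to the apex is a Lipschitz singular disk of area at most $r\ell/2\le \ell^2/(2\sigma)$. If $\ell<\sigma r$, pick any $p\in\tilde\gamma$; then $\tilde\gamma$ lies inside $B(p,\ell/2)\subset C(L)\setminus\{0\}$, and since $\ell/2<\sigma r/2$ is dominated by the injectivity radius of $C(L)\setminus\{0\}$ at height $r$ while the sectional curvatures there are $O(1/r^2)$ (and hence negligible on this scale), this ball is essentially Euclidean, so coning $\tilde\gamma$ to $p$ yields a singular disk of area at most $C\ell^2$ for a universal $C=C(L)$. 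The resulting uniform bound on the isoperimetric ratio in $C(L)$ pulls back under $\phi^{-1}$ to a bound $K^4 C$ on the isoperimetric ratio in $X$, contradicting the hypothesis that it tends to $\infty$.

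The main obstacle is the second-kind case, specifically making the essentially-Euclidean filling precise when $\ell<\sigma r$: one has to control the injectivity radius and the curvatures on $C(L)\setminus\{0\}$ uniformly in the height $r$, which by scale invariance of the cone metric reduces to a statement about $L$ at height $1$. The first-kind case is, by comparison, a one-line systole argument.
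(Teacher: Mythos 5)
Your systole argument for fast loops of the first kind is correct and is a self-contained alternative to the paper's citation of \cite{BFN1}; the $1$-Lipschitz radial projection onto a level set, together with the positivity of the systole of $L$, is exactly the mechanism that underlies that reference.

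For fast loops of the second kind there is a genuine gap: you are bounding the wrong isoperimetric ratio. In Definition \ref{def:fast loop} the loop $\gamma_\epsilon$ lives in the compact $3$-manifold $X^{(\epsilon)}$, and its isoperimetric ratio is taken over filling disks lying \emph{inside} $X^{(\epsilon)}$; this is exactly how the ratio is used in the proof of Theorem \ref{th:fast loops}, where the filling is a map $f\colon D\to X^{(\epsilon)}$. Your two-regime construction, by contrast, produces fillings in the $4$-dimensional cone $C(L)$: coning $\tilde\gamma$ to the apex sweeps through all radial levels, and in the short regime the ball $B(p,\ell/2)$ is a ball of $C(L)\setminus\{0\}$, not of the level set $L\times\{r\}$. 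Pulled back by $\phi^{-1}$ these are singular disks in $X\cap B_{\epsilon_0}$, not in $X^{(\epsilon)}$, so the resulting bound says nothing about the quantity that the definition assumes diverges. The paper's own argument stays entirely in the $3$-manifold and requires none of the curvature or injectivity-radius control you flag as the main obstacle: since the ratio $\text{area}/\text{length}^2$ is scale-invariant and is distorted by a factor at most $K^4$ under a $K$-bilipschitz homeomorphism of the $3$-manifold, the metric-conicalness hypothesis (which makes $X^{(\epsilon)}$ $K$-bilipschitz to the rescaled fixed link $\epsilon L$) reduces the claim to the elementary fact that nullhomotopic curves of bounded length in the fixed compact Riemannian manifold $L$ have uniformly bounded isoperimetric ratio. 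If you want to salvage your approach, you must redo the filling estimates intrinsically in $L$ (or $L\times\{r\}$), at which point they collapse to that compactness statement.
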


\begin{proof}
  It is shown \cite{BFN1} that a fast loop of the first kind cannot
  exist in a metric cone, so its existence is an obstruction to
  metric conicalness.

  For a fixed Riemannian manifold $M$ the isoperimetric ratio for a
  given nullhomotopic curve is invariant under scaling of the metric
  and is changed by a factor of at most $K^4$ by a $K$-bilipschitz
  homeomorphism of $M$. It follows that if $X$ is metrically conical
  then for any $C>0$ there is a overall bound on the isoperimetric
  ratio of nullhomotopic curves in $X^{(\epsilon)}$ of length $\le
  C\epsilon$ as $\epsilon\to 0$.
\end{proof}

\begin{theorem}\label{th:fast loops}
  Any curve in a Milnor fiber of a thin zone $Z_j^{(\epsilon_0)}$ of
  $X^{(\epsilon_0)}$ which is homotopically nontrivial in
  $Z_j^{(\epsilon_0)}$ gives a fast loop of the first or second kind.
\end{theorem}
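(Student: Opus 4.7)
My plan is to construct the family $\{\gamma_\epsilon\}$ by flowing $\gamma$ along the vector field $v_j$ from Proposition \ref{prop:thin1}(3), which carries Milnor fibers to Milnor fibers and shrinks transverse distances faster than linearly. Starting with $\gamma \subset F_{j,t_0}$ at level $\epsilon_0$, this produces for each $\epsilon \le \epsilon_0$ a curve $\gamma_\epsilon \subset F_{j,t_\epsilon} \subset Z_j^{(\epsilon)}$ with $|t_\epsilon|=\epsilon$. Since $\mathrm{diam}(F_{j,t_\epsilon}) = O(\epsilon^{q_j})$ for some $q_j>1$ by Proposition \ref{prop:thin1}(1)--(2), and the flow contracts transverse distances faster than linearly, $\mathrm{length}(\gamma_\epsilon)=O(\epsilon^{q_j})$. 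The flow is a smooth diffeomorphism of the thin piece across scales (and respects the thick--thin decomposition), so $\gamma_\epsilon$ remains homotopically nontrivial in $Z_j^{(\epsilon)}$ for every $\epsilon$.

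I then split on the class of $\gamma$ in $X^{(\epsilon_0)}$. If $\gamma$ is homotopically nontrivial in $X^{(\epsilon_0)}$, the flow gives a homotopy from $\gamma$ to each $\gamma_\epsilon$ inside $X\setminus\{0\}$; composing with the radial retraction $X\setminus\{0\}\simeq X^{(\epsilon)}$ shows $\gamma_\epsilon$ is homotopically nontrivial in $X^{(\epsilon)}$. Together with the faster-than-linear length estimate, this is by definition a fast loop of the first kind.

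Suppose instead $\gamma$ is null-homotopic in $X^{(\epsilon_0)}$, hence each $\gamma_\epsilon$ is null-homotopic in $X^{(\epsilon)}$. I must verify that the isoperimetric ratio of $\gamma_\epsilon$ tends to $\infty$. Suppose by contradiction that there is $K>0$ and filling disks $D_\epsilon\subset X^{(\epsilon)}$ of $\gamma_\epsilon$ with $\mathrm{area}(D_\epsilon)\le K\cdot\mathrm{length}(\gamma_\epsilon)^2=O(\epsilon^{2q_j})$. Since $\gamma_\epsilon$ is essential in $\pi_1(Z_j^{(\epsilon)})$ and $Z_j^{(\epsilon)}$ is not a solid torus by Theorem \ref{th:main in intro}\eqref{it:thin}, $D_\epsilon$ cannot lie inside $Z_j^{(\epsilon)}$. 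After isotoping $D_\epsilon$ to meet $\partial Z_j^{(\epsilon)}$ transversely and running the standard innermost-disk/compression argument on $D_\epsilon\cap\partial Z_j^{(\epsilon)}$ (each such move weakly decreases area), I may assume every component of $D_\epsilon\cap\partial Z_j^{(\epsilon)}$ is essential in the torus $\partial Z_j^{(\epsilon)}$. This produces a subdisk of $D_\epsilon$ contained in an adjacent thick piece $Y_i^{(\epsilon)}$ whose boundary is an essential curve $C_\epsilon$ on $\partial Y_i^{(\epsilon)}$.

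The main obstacle is then to obtain an area lower bound exceeding $\epsilon^{2q_j}$ for this subdisk. The key point is that $\partial Z_j^{(\epsilon)}$ carries two types of essential classes: a short ``meridional'' one (the boundary of the Milnor fiber, of length $O(\epsilon^{q_j})$) and long ``longitudinal'' classes of length $\Omega(\epsilon)$. I would use the fact that $\gamma_\epsilon$ is essential in $\pi_1(Z_j^{(\epsilon)})$ (not merely in $H_1$) together with the graph-manifold structure of $Z_j^{(\epsilon)}$ (Theorem \ref{th:main in intro}\eqref{it:thin}) to force the compression to produce at least one longitudinal $C_\epsilon$, of length $\Omega(\epsilon)$. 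Since $Y_i$ is metrically conical, $Y_i^{(\epsilon)}$ is uniformly bilipschitz to the scaled link $\epsilon\cdot Y_i^{(1)}$ in its conical core, so any disk in $Y_i^{(\epsilon)}$ bounded by $C_\epsilon$ has area $\Omega(\epsilon^2)$ by bilipschitz rescaling of the filling-area of the corresponding fixed homotopy class in $Y_i^{(1)}$. Since $2<2q_j$, this contradicts $\mathrm{area}(D_\epsilon)=O(\epsilon^{2q_j})$ and establishes $\mathrm{area}(D_\epsilon)/\mathrm{length}(\gamma_\epsilon)^2\to\infty$, giving a fast loop of the second kind. The hard part is the dichotomy short-vs-long meridian argument: ruling out that the compression leaves only short meridional intersection circles in $\partial Z_j^{(\epsilon)}$, which would not force an $\Omega(\epsilon^2)$ area expenditure in the thick piece.
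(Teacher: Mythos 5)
Your construction of the family $\{\gamma_\epsilon\}$, the case split, and the strategy for the second case (innermost-disk compression, then an $\Omega(\epsilon^2)$ area lower bound inside a thick piece) follow the paper's proof exactly. You have also put your finger on the genuinely delicate point, which the paper glosses over: the sentence ``there is a lower bound proportional to $\epsilon$ on the length of essential closed curves in $\partial Y^{(\epsilon)}$'' is, read literally, false. The tori $\partial Y^{(\epsilon)}=\partial Z_j^{(\epsilon)}$ do carry essential curves of length $O(\epsilon^{q_j})$ with $q_j>1$, namely the boundary circles $\mu$ of the Milnor fiber $F_j$ --- this is precisely the content of Theorem~\ref{th:thin has fast loops}(1). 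So the area bound does not follow simply from ``thick pieces are metrically conical''; one must additionally rule out that the compressing circle $c=\partial D'$ is in the short class $\pm\mu$.

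This is exactly your ``short-vs-long'' dichotomy, and you are right that it requires an argument. Where you leave a gap, the paper (implicitly) has one too, but it is easily closed from material proved elsewhere in Section~\ref{fast loops} without circularity. If $c$ were homotopic on $T$ to $\pm\mu$, then $\mu$ would bound a disc in the adjacent thick piece, hence be null-homotopic in $X^{(\epsilon)}$. But Lemma~\ref{le:gamma is positive multiple of core} (together with Lemma~\ref{le:meridian curves are nontrivial in pi_1} and Lemma~\ref{le: core criterion}, feeding into Proposition~\ref{prop:fast boundaries}) shows that $\mu$ is a positive nonzero multiple of the core of any solid torus it might bound into --- in particular it is never a meridian --- and these lemmas do not rely on Theorem~\ref{th:fast loops}, so there is no circularity. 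Hence $c$ is a primitive class on $T$ distinct from $\pm\mu$; since $T$ is bilipschitz to a flat torus with one circle factor of length $\asymp\epsilon$ and the other of length $\asymp\epsilon^{q_j}$, and $\mu$ is the short factor, every primitive class other than $\pm\mu$ has a representative of length $\gtrsim\epsilon$. That gives the required length bound for $c$, and then conicalness of the thick piece yields $\operatorname{area}(D')\gtrsim\epsilon^2$, completing the isoperimetric estimate. So: your approach is correct and identical in structure to the paper's; you were right to flag the length estimate as the hard step, and the resolution is the meridian/core argument from Proposition~\ref{prop:fast boundaries}, not a separate graph-manifold argument as you sketched.
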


\begin{proof}
  Using the vector field of Proposition \ref{prop:thin1}, any closed
  curve $\gamma$ in the Milnor fiber of the link $Z_j^{(\epsilon_0)}$
  of a thin piece $Z_j$ of $X^{(\epsilon_0)}$ gives rise to a
  continuous family of closed curves $\gamma_\epsilon\colon[0,1]\to
  Z_j^{(\epsilon)}$, $\epsilon\le\epsilon_0$, whose lengths shrink
  faster than linearly with respect to $\epsilon$.

  If $\gamma$ is homotopically non-trivial in $X^{(\epsilon_0)}$ then
  $\{\gamma_\epsilon \}_{0<\epsilon\le\epsilon_0}$ is a fast loop of
  the first kind. Otherwise, let $f\colon D_\epsilon\to
  X^{(\epsilon)}$ be a map of a disk with boundary $\gamma_\epsilon$.

  Let $Y=\bigcup_iY_i$ denote the thick part of $X$. Let
  $Y_\epsilon\subset Y$ for $\epsilon\le \epsilon_0$ be a collection
  of conical subsets as in Definition \ref{def:thick}. The link of
  $Y_\epsilon$ is $Y^{(\epsilon)}$. We can approximate $f$ by a smooth
  map transverse to $\partial Y^{(\epsilon)}$ while only increasing
  area by an arbitrarily small factor. Then $f(D)\cap\partial
  Y^{(\epsilon)}$ consists of smooth immersed closed curves. A
  standard innermost disk argument shows that at least one of them is
  homotopically nontrivial in $\partial Y^{(\epsilon)}$ and bounds a
  disk $D'$ in $Y^{(\epsilon)}$ obtained by restricting $f$ to a
  subdisk of $D$.  Since there is a lower bound proportional to
  $\epsilon$ on the length of essential closed curves in $\partial
  Y^{(\epsilon)}$ and $Y$ is thick, the area of $D'$ is bounded below
  proportional to $\epsilon^2$. It follows that the isoperimetric
  ratio for $\gamma_\epsilon$ tends to $\infty$ as $\epsilon\to 0$.
\end{proof}

\begin{theorem}\label{th:thin fast loops}~
\begin{enumerate}
\item  Every thin piece $Z_j$ contains a fast loop (of the first
  kind). In fact every boundary component of a Milnor fiber $F_j$ of
  $Z_j$ is a  fast loop.
\item Every point $p\in Z_j$ lies on some fast loop
  $\{\gamma_\epsilon\}_{0<\epsilon\le \epsilon_0}$ and every real
  tangent line to $Z_j$ is tangent to 
  $\bigcup_\epsilon\gamma_\epsilon\cup\{0\}$ for some such fast loop.
\end{enumerate}
\end{theorem}
 
\begin{proof} Part 2 follows from Part 1 because the link
  $Z_j^{(\epsilon_0)}$ is foliated by Milnor fibers and a boundary
  component of a Milnor fiber $\zeta_j^{-1}(t)$ can be isotoped into a
  loop $\gamma$ through any point of the same Milnor fiber. The family
  $\{\gamma_\epsilon\}_{0<\epsilon\le \epsilon_0}$ obtained from
  $\gamma$ using the vector field of Proposition \ref{prop:thin1} is a
  fast loop whose tangent line is the real line $L_j\cap \{\arg z_1=
  \arg t\}$. 

  We will actually prove a stronger result than part 1 of the theorem,
  since it is needed in Section \ref{sec:invariance}. First we need a
  remark.
 \begin{remark}\label{rem:nielsen}
   The monodromy map $\phi_j\colon F_j\to F_j$ for the fibration
   $\zeta_j \mid_{ Z_j^{(\epsilon_0)}}$ is a quasi-periodic map, so
   after an isotopy it has a decomposition into subsurfaces $F_\nu$ on
   which $\phi_j$ has finite order acting with connected quotient,
   connected by families of annuli which $\phi_j$ cyclically permutes
   by a generalized Dehn twist (i.e., some power is a Dehn twist on
   each annulus of the family). The minimal such decomposition is the
   Thurston-Nielsen decomposition, which is unique up to isotopy. By
   \cite{pichon}[Lemme 4.4] each $F_{\nu}$ is associated with a node
   $\nu$ of $\Gamma_j$ while each string joining two nodes $\nu$ and
   $\nu'$ of $\Gamma_j$ corresponds to a $\phi_j$-orbit of annuli
   connecting $F_{\nu}$ to $F_{\nu'}$. This decomposition of the fiber
   $F_j$ corresponds to the minimal decomposition of
   $Z_j^{(\epsilon_0)}$ into Seifert fibered manifolds
   $Z_{\nu}^{(\epsilon_o)}$, with thickened tori between them, i.e.,
   the JSJ decomposition.
\end{remark}
The following proposition is more general than part 1 of Theorem
\ref{th:thin fast loops} and therefore completes its proof.
\end{proof}
\begin{proposition}\label{prop:fast boundaries}
  Each boundary component of a $F_{\nu}$ gives a fast loop (of the
  first kind).
 \end{proposition}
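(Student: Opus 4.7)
The argument has two parts: first, produce a family of loops shrinking strictly faster than linearly; second, verify that the loops are homotopically nontrivial in $X^{(\epsilon_0)}$.

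For the first part, let $c$ be a boundary component of $F_\nu$, sitting in a Milnor fiber $F_j = \zeta_j^{-1}(t_0)\cap S_{\epsilon_0}$. Flow $c$ backward through the Milnor fibration using the vector field $v_j$ from Proposition \ref{prop:thin1}(3). Because $v_j$ lifts the inward radial unit vector field on $\C\setminus\{0\}$ and takes Milnor fibers to Milnor fibers while shrinking them faster than linearly, the resulting continuous family $\{c_\epsilon\}_{0<\epsilon\le\epsilon_0}$ with $c_{\epsilon_0}=c$ satisfies $\operatorname{length}(c_\epsilon) = o(\epsilon)$. This takes care of the metric requirement.

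For homotopical nontriviality, use Remark \ref{rem:nielsen}: up to isotopy, $c$ is the core of one of the annuli in the Thurston--Nielsen decomposition of $F_j$, and these annuli sit on the JSJ tori of $Z_j^{(\epsilon_0)}$ meeting $Z_\nu^{(\epsilon_0)}$. On such a torus $T$, the curve $c$ is isotopic to a regular fiber of the Seifert structure on $Z_\nu^{(\epsilon_0)}$. Since $X^{(\epsilon_0)}$ is a graph manifold whose canonical JSJ decomposition refines the thick-thin decomposition, $T$ is incompressible in $X^{(\epsilon_0)}$. In an irreducible graph manifold, a simple closed curve on an incompressible torus is nullhomotopic if and only if it bounds a meridian disk in a solid-torus piece lying on one of the two sides of $T$; thus $c$ is essential in $X^{(\epsilon_0)}$ once neither side of $T$ carries such a solid-torus collapse.

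The main obstacle is excluding both potential collapses. On the $Z_j$ side, $Z_\nu^{(\epsilon_0)}$ is not a solid torus: any solid torus adjacent to a JSJ torus would have been absorbed into the neighboring piece by minimality of the JSJ decomposition, and moreover $Z_j^{(\epsilon_0)}$ itself is not a solid torus by Theorem \ref{th:main in intro}(\ref{it:thin}). On the other side of $T$, if the adjacent piece is another Tjurina-component Seifert piece, the same argument applies. If it is a thick piece $Y_i^{(\epsilon_0)}$, one must verify, using the structure of the chosen resolution $\pi$ (properties (\ref{r1})--(\ref{r3}) of Section \ref{sec:thick-thin}, combined with the bamboo absorption carried out in Section \ref{sec:thick}), that the Seifert fiber direction of $Z_\nu^{(\epsilon_0)}$ on $T$ is transverse to the Seifert fiber direction on the thick side and is never the meridian of a solid-torus thick piece; this is essentially a negative-definiteness computation on the intersection form of the exceptional divisor of $\pi$. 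Once this is in hand, $c$ represents a nontrivial element of $\pi_1(X^{(\epsilon_0)})$ and $\{c_\epsilon\}$ is a fast loop of the first kind.
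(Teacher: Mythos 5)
The first half of your argument—flowing a boundary component $c$ of $F_\nu$ down the vector field of Proposition \ref{prop:thin1}(3) to get a family of loops whose length is $o(\epsilon)$—is correct and matches the paper.

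The second half, however, has both a logical confusion and a serious gap. You assert that ``$T$ is incompressible in $X^{(\epsilon_0)}$'' and then in the next sentence describe a criterion for a curve on $T$ to bound a meridian disk in a solid-torus piece on one side. These two statements are incompatible: if $T$ were incompressible, then by definition no essential simple closed curve on $T$ is nullhomotopic in $X^{(\epsilon_0)}$ and you would already be done, with no need for any further criterion; conversely, a curve on $T$ bounding a meridian disk in an adjacent solid torus is precisely a compressing disk for $T$. The paper's proof proceeds exactly by \emph{not} assuming incompressibility: it supposes $\gamma$ is nullhomotopic, concludes $T$ is compressible, and then uses the fact that a plumbed manifold with negative definite plumbing graph has a compressible boundary torus only if it is a solid torus, so one side of $T$ is a solid torus $A$. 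Establishing incompressibility of $T$ up front is essentially equivalent to proving the proposition itself.

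The bigger gap is that you dismiss the hard part as ``essentially a negative-definiteness computation on the intersection form.'' That computation is the content of Lemmas \ref{le:gamma is positive multiple of core}, \ref{le:meridian curves are nontrivial in pi_1}, and \ref{le: core criterion}, and it is not a single argument. You need (i) to show that $\gamma$ is a \emph{nonzero} multiple of the core of the solid torus $A$—this uses that $\gamma$ lies in a Milnor fiber of $z_1$, so $z_{1*}(\gamma)=0$, while the meridian of $A$ has $z_{1*}>0$ because $A$ contains an \L-node; (ii) in the case $\pi_1(X^{(\epsilon_0)})$ is infinite, to show the curvette boundary (core of $A$) has infinite order, which requires chasing through bamboos and iterated torus knot structure; and (iii) in the case $\pi_1(X^{(\epsilon_0)})$ is finite—the rational singularity case—a genuine case-by-case analysis of lens spaces and of the Seifert manifolds with exceptional fiber multiplicities $(2,3,3)$, $(2,3,4)$, $(2,3,5)$, $(2,2,k)$ using the explicit formula of Lemma \ref{le: core criterion}. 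None of this reduces to a uniform negative-definiteness observation, and the finite fundamental group case in particular cannot be handled by any purely topological principle about incompressible tori since $\pi_1$ is finite. Your proposal identifies where the difficulty lives but does not supply the argument.
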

 \begin{proof}
   We assume the contrary, that some boundary component $\gamma$ of
   $F_{\nu}$ is homotopically trivial in $X^{(\epsilon_0)}$. We will
   derive a contradiction.

   Let $T$ be the component of $\partial Z_{\nu}^{(\epsilon_0)}$ which
   contains $\gamma$. Then it is compressible, so it contains an
   essential closed curve which bounds a disk to one side of $T$ or
   the other.  Cutting $X^{(\epsilon_0)}$ along $T$ gives a manifold
   with a compressible boundary torus. But a plumbed
   manifold-with-boundary given by negative definite plumbing has a
   compressible boundary component if and only if it is a solid torus,
   see \cite{neumann81, MPW}.  So $T$ separates $X^{(\epsilon_0)}$
   into two pieces, one of which is a solid torus.  Call it $A$.
  \begin{lemma}\label{le:gamma is positive multiple of core}
 $\gamma$ represents a nontrivial element of $\pi_1(A)$.
\end{lemma}
\begin{proof}
  Let $\Gamma_0$ be the subgraph of $\Gamma$ representing $A$, i.e., a
  component of the subgraph of $\Gamma$ obtained by removing the edge
  corresponding to $T$. We will attach an arrow to $\Gamma_0$ where
  that edge was. Then the marked graph $\Gamma_0$ is the plumbing
  graph for the solid torus $A$. We note that $\Gamma_0$ must contain
  at least one \L-node, since otherwise $A$ would have to be
  $Z_j^{(\epsilon_0)}$ or a union of pieces of its JSJ decomposition,
  but this cannot be a solid torus.

  We now blow down $\Gamma_0$ to eliminate all $(-1)$-curves. We then
  obtain a bamboo $\Gamma'$ with negative definite matrix with an
  arrow at one extremity. It is the resolution graph of some cyclic
  quotient singularity $Q=\C^2/(\Z/p)$ and from this point of view the
  arrow represents the strict transform of the zero set of the
  function $x^p$ on $Q$, where $x,y$ are the coordinates of
  $\C^2$. The Milnor fibers of this function give the meridian discs
  of the solid torus $A$. Let $D$ be such a meridian disc. In the
  resolution, the intersection of $D$ with any curvette (transverse
  complex disk to an exceptional curve) is positive (namely an entry
  of the first row of $pS_{\Gamma'}^{-1}$, where $S_{\Gamma'}$ is the
  intersection matrix associated with $\Gamma'$). Since $\Gamma_0$
  contains an \L-node, then in particular $D$ intersects transversely
  the strict transform $z_1^*$ of $z_1$ and we set $D.z_1^*=s>0$.
  
  Let us return back to our initial $X^{(\epsilon_0)}$. Recall that in
  this paper, we use Milnor ball $B_{\epsilon}$ which are standard
  Milnor tubes for the Milnor-L\^e fibration $\zeta = z_1|_{X}$ (see
  Section \ref{sec:balls}). Denote again by $\zeta \colon T \to
  S^1_{\epsilon_0} $ the restriction of $z_1$ to $T =
  z_1^{-1}(S^1_{\epsilon_0})\cap X^{(\epsilon_0)}$, and let
  $\zeta_*\colon H_1(T;\Z)\to\Z$ be the induced map.
    
  The meridian curve $c =\partial D$ of $A$ is homologically
  equivalent to the sum of the boundary curves $c_1,\ldots c_s$ of
  small disk neighborhoods of the intersection points of $D$ with $
  z_1^*$.  The Milnor-L\^e fibration $\zeta$ is equivalent to the
  Milnor fibration $\frac{z_1}{|z_1|} \colon X^{(\epsilon_0)}
  \setminus L \to \mathbb S^1$ outside a neighborhood of the link
  $L=\{z_1=0\} \cap X^{(\epsilon_0)}$, and the latter is an open-book
  fibration with binding $L$. Therefore $\zeta_*(c_i)=1$ for each
  $i=1,\ldots,s$ and $\zeta_*(c)=s>0$.  Since $\zeta_*(\gamma)=0$,
  this implies the lemma.
  \end{proof}
 
  We now know that $\gamma$ is represented by a non-zero multiple of
  the core of the solid torus $A$. This core curve is the curvette
  boundary for the curvette transverse to the end curve of the bamboo
  obtained by blowing down $\Gamma_0$ in the above proof.

\begin{lemma}
 \label{le:meridian curves are nontrivial in pi_1} Given any
 good resolution graph (not necessarily minimal) for a normal
 complex surface singularity whose link $\Sigma$ has infinite
 fundamental group, the boundary of a curvette  always represents an
 element of infinite order in $\pi_1(\Sigma)$.
  \end{lemma}
    
\begin{proof}
  After blowing down to obtain a minimal good resolution graph the
  only case to check is when the blown down curvette $c$ intersects a
  bamboo, since otherwise its boundary represents a nontrivial element
  of the fundamental group of some piece of the JSJ decomposition of
  $\Sigma$ and each such piece embeds $\pi_1$-injectively in
  $\Sigma$. If $c$ intersects a single exceptional curve $E_1$ of the
  bamboo, then its boundary is homotopic to a positive multiple of the
  curvette boundary for $E_1$, so by the argument in the proof of the
  previous lemma, the boundary of $c$ is homotopic to a positive
  multiple of the core curve of the corresponding solid torus. As a
  fiber of the Seifert fibered structure on a JSJ component of
  $\Sigma$, this element has infinite order in $\pi_1$ of the
  component and therefore of $\Sigma$. Finally suppose $c$ intersects
  the intersection point of two exceptional divisors $E_1$ and $E_2$
  of the bamboo. Then in local coordinates $(u,v)$ with $E_1=\{u=0\}$
  and $E_2=\{v=0\}$ the curvette can be given by a Puiseux expansion
  $u=\sum_iv^{\frac{q_i}{p_i}}$, where, without loss of generality,
  $q_i>p_i$. Then the boundary of $c$ is an iterated torus knot in
  this coordinate system which homologically is a positive multiple of
  $p_1\mu_1+q_1\mu_2$, where $\mu_i$ is a curvette boundary of $E_i$
  for $i=1,2$. We conclude by applying the previous argument to
  $\mu_1$ and $\mu_2$ that $c$ is a positive multiple of the core
  curve of the solid torus, completing the proof.
\end{proof}
Returning to the proof of Proposition \ref{prop:fast boundaries}, we see
that if $\pi_1(X^{(\epsilon_0)})$ is infinite then the curve $\gamma$
of Lemma \ref{le:gamma is positive multiple of core} is nontrivial in
$\pi_1(X^{(\epsilon_0)})$ by Lemma \ref{le:meridian curves are
  nontrivial in pi_1}, proving the proposition in this case.

It remains to prove the proposition when $\pi_1(X^{(\epsilon_0)})$ is
finite. Then $(X,0)$ is a rational singularity (\cite{boutot,
  hochster}), and the link of $X^{(\epsilon_0)}$ is either a lens
space or a Seifert manifold with three exceptional fibers with
multiplicities $(\alpha_1, \alpha_2, \alpha_3)$ equal to $(2,3,3)$,
$(2,3,4)$, $(2,3,5)$ or $(2,2,k)$ with $k \geq 2$.

We will use the following  lemma

\begin{lemma}
  \label{le: core criterion} Let $A$ and $\gamma$ as in Lemma
  \ref{le:gamma is positive multiple of core}. Suppose the subgraph
  $\Gamma_0$ of\/ $\Gamma$ representing the solid torus $A$ is a bamboo
  $$\splicediag{6}{20}{
 \undertag{\overtag{\circ}{-b_1}{8pt}}{{\nu_1}}{0pt}\lineto[r]&
 \undertag{\overtag{\circ}{-b_2}{8pt}}{{\nu_2}}{0pt}\dashto[r]&&
 \dashto[r]&\undertag{\overtag{\circ}{-b_r}{8pt}}{{\nu_r}}{0pt}\\~}$$
  attached at vertex $\nu_1$ to the vertex $\nu$ of\/ $\Gamma$. Denote
  by $m_{\nu}$ and $m_{\nu_1}$ the multiplicities of the function
  $\widetilde z_1 = z_1 \circ \pi$ along $E_{\nu} $ and $E_{\nu_1}$, and
  let $(\alpha,\beta)$ be the Seifert invariant of the
  core of $A$ viewed as a singular fiber of the $S^1$-fibration
  over $E_{\nu}$, i.e.,  $\frac{\alpha}{\beta} = [b_{\nu_1},
  \ldots,b_{\nu_n}]$. Let $C$ be the
  core of $A$ oriented as the boundary of a curvette of
  $E_{\nu_r}$. Then, with $d=\gcd(m_{\nu}, m_{\nu_1})$, we have
  $$ \gamma = \left(\frac{m_{\nu_1}}d \alpha -
  \frac{m_{\nu}}d \beta\right)C  
  \quad\text{in}\quad H_1(A;\Z).$$
\end{lemma}

\begin{proof}
  Orient the torus $T$ as a boundary component of $A$. Let $C_{\nu}$
  and $C_{\nu_1}$ in $T$ be boundaries of curvettes of $E_{\nu}$ and
  $E_{\nu_1}$. Then $\gamma = \frac{m_{\nu}}{d}
  C_{\nu_1} - \frac{m_{\nu_1}}{d} C_{\nu}$, and
  the meridian of $A$ on $T$ is given by $M = \alpha C_{\nu_1} + \beta
  C_{\nu}$. Then $\gamma = \lambda C$ where $\lambda =M. \gamma $. As
  $C_{\nu_1}.C_{\nu} = +1$ on $T$, we then obtain the stated formula.
\end{proof}

Let us return to the proof of Proposition \ref{prop:fast
  boundaries}. When $X^{(\epsilon_0)}$ is a lens space, then the
minimal resolution graph is a bamboo:
$$\splicediag{6}{20}{
  \overtag{\circ}{-b_1}{8pt}\lineto[r]&
  \overtag{\circ}{-b_2}{8pt}\dashto[r]&&
  \dashto[r]&\overtag{\circ}{-b_n}{8pt}}$$ The function $\widetilde z_1$
has multiplicity $1$ along each $E_{i}$, and the strict transform of
$z_1$ has $b_{1}-1$ components intersecting $E_1$, $b_n-1$ components
on $E_n$, and $b_i-2$ components on any other curve $E_i$. In
particular the \L-nodes are the two extremal vertices of the bamboo
and any vertex with $b_i\ge 3$.  To get the adapted resolution graph
of section \ref{sec:thick-thin} we should blow up once between any two
adjacent \L-nodes. Then the subgraph $\Gamma_j$ associated with our
thin piece $Z_j$ is either a $(-1)$-weighted vertex or a maximal
string $\nu_i,\nu_{i+1},\dots,\nu_k$ of vertices excluding $\nu_1,
\nu_n$ carrying self intersections $b_j=-2$.  

We first consider the
second case. Let $\gamma$ be the intersection of the Milnor fiber of
$Z_j$ with the plumbing torus at the intersection of $E_k$ and
$E_{k+1}$. According to Lemma \ref{le: core criterion}, we have
$\gamma = ( \alpha - \beta )C$ where $C$ is the boundary of a curvette
of $E_n$ and $\frac{\alpha}{\beta} = [b_{k+1},\ldots,b_n]$ with $E_i^2
= -b_i$. But $C$ is a generator of the cyclic group
$\pi_1(X^{(\epsilon_0)})$, which has order $p = [b_{1}, \ldots ,
b_{n}]$. Since $0 <\alpha - \beta \leq \alpha < p$, we obtain that
$\gamma$ is nontrivial in $\pi_1(X^{(\epsilon_0)})$.

For the case of a $(-1)$-vertex we can work in the
minimal resolution with $E_k$ and $E_{k+1}$ the two adjacent \L-curves
and $T$ the plumbing torus at their intersection, since blowing down
the $(-1)$-curve does not change $\gamma$. So it is the same
calculation as before. This
completes the lens space case.
 
We now assume that $X^{(\epsilon_0)}$ has three exceptional fibers
whose multiplicities $(\alpha_1,\alpha_2, \alpha_3)$ are $(2,3,3)$,
$(2,3,4)$, $(2,3,5)$ or $(2,2,k)$ with $k \geq 2$. Then the graph
$\Gamma$ is star-shaped with three branches and with a central node
whose Euler number is $e \leq -2$. If $e \leq -3$, then $\widetilde z_1$
has multiplicity $1$ on each exceptional curve and we conclude using
Lemma \ref{le: core criterion} as in the lens space case. 

When $e=-2$, then the multiplicities of $\widetilde{z_1}$ are not all
equal to $1$, and we have to examine all the cases one by one. For
$(\alpha_1,\alpha_2, \alpha_3) = (2,3,5)$, there are eight possible
values for the Seifert pairs, which are $(2,1)$, $(3,\beta_2)$,
$(5,\beta_3)$ where $\beta_2 \in \{1,2\}$ and $\beta_3 \in
\{1,2,3,4\}$.  For example, in the case $\beta_2 = 1$ and $\beta_3 =
3$ the resolution graph $\Gamma$ is represented by
$$\splicediag{10}{24}{
  \undertag{\overtag{\circ}{-2}{8pt}}{(1)}{-3pt}\lineto[r]&\undertag{\overtag{\circ}{-2}{8pt}}{(2)\quad}{-3pt}\lineto[r]\lineto[dd]&
  \undertag{\overtag{\circ}{-3}{8pt}}{(1)}{-3pt}\ar@{->}[dr]\\&&&
  \\
  &\righttag{\lefttag{\circ}{-3}{2pt}}{(2)}{6pt}\lineto[dd]\ar@{->}[dr]\\&&&
  \\
  &\righttag{\lefttag{\circ}{-1}{2pt}}{(3)}{6pt}\lineto[dd]\\
  \\
  &\righttag{\lefttag{\circ}{-4}{2pt}}{(1)}{6pt}\ar@{->}[dr]\\&&& }$$
The arrows represent the strict transform of the generic linear
function $z_1$.  There are two thin zones obtained by deleting the
vertices with arrows and their adjacent edges, leading to four curves
$\gamma$ to be checked. Lemma \ref{le: core criterion} computes them
as $C_3$, $2C_5$, $C_5$ and $C_5$ respectively, where $C_p$ is the
exceptional fiber of degree $p$.  The other cases are easily checked in
the same way. The case $(2,2,k)$ gives an infinite family similar to
the lens space case.
\end{proof}

Proposition \ref{prop:fast loops} and either one of Theorem
\ref{th:fast loops} or Theorem \ref{th:thin fast loops} show:

\begin{corollary}\label{cor:conical1}
  $(X,0)$ is metrically conical if and only if there are no thin
  pieces. Equivalently, $\Gamma$ has only one
  node, and it is the unique \L-node.\qed
\end{corollary}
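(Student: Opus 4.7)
The plan is to prove the two equivalences separately: first, metric conicalness versus the absence of thin pieces, then the absence of thin pieces versus the combinatorial condition on the resolution graph~$\Gamma$.

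For the first equivalence, the obstruction direction is essentially already in hand: if the thick--thin decomposition has a thin piece $Z_j$, then Theorem~\ref{th:thin has fast loops} produces a fast loop of the first kind in $Z_j$, and Proposition~\ref{prop:fast loops} rules out metric conicalness. For the converse I would argue that if there is no thin piece then $X$ must equal a single thick piece: the link decomposition property of Definition~\ref{def:thick-thin} glues the thick zones $Y_i^{(\epsilon)}$ along boundary tori, and connectedness of $X^{(\epsilon)}$ combined with the absence of thin zones to interpose between distinct thick zones forces $r=1$, so $X=Y_1$. The interface collars in Definition~\ref{def:thick} are then vacuous, the family $Y_\epsilon$ exhausts $X \cap B_\epsilon$, and the uniform bilipschitz constant $K$ realises $X$ directly as a metric cone on its link. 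I expect this step to be the main place where care is needed, since the definition of thick germ was written so as to tolerate non-conical interfaces with thin parts; one has to verify that once those interfaces disappear the thick condition collapses to honest metric conicalness.

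For the combinatorial reformulation, the thin pieces are by construction in bijection with the Tjurina components of $\Gamma$ that are not bamboos, so the absence of thin pieces is equivalent to every Tjurina component being a bamboo -- and in particular containing no nodes at all, so that no \T-nodes exist. It then remains to force the number of \L-nodes to be exactly one. If there were two or more \L-nodes, connectedness of $\Gamma$ together with the non-adjacency of \L-nodes would supply a shortest path between two of them whose interior vertices (necessarily of valence two in $\Gamma$, since \T-nodes have been excluded) form a Tjurina component that is a string with both endpoints of valence two in $\Gamma$; such a string is not a bamboo, contradicting the assumption. Conversely, if $\Gamma$ has exactly one node and it is an \L-node, then every Tjurina component is a dangling string attached at that node, hence a bamboo, so no thin piece arises.
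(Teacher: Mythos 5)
Your argument for the metric equivalence matches the paper's: thin piece $\Rightarrow$ fast loop (Theorem~\ref{th:thin has fast loops}) $\Rightarrow$ not metrically conical (Proposition~\ref{prop:fast loops}), and for the converse you note that with no thin pieces $X$ must consist of a single thick piece. Where you diverge slightly is in extracting conicalness from that: the paper gets it from the explicit construction in Section~\ref{sec:thick} (Lemma~\ref{le:2} together with Valette's gluing theorem for conical pieces), whereas you read it off from Definition~\ref{def:thick} directly, observing that when the link $X^{(\epsilon)}$ is a closed manifold the collar term in condition~(2) is empty, so $Y_{\epsilon'}\cap S_{\epsilon'}=Y_{\epsilon_0}\cap S_{\epsilon'}$ for all $\epsilon'\le\epsilon_0$, forcing $Y_{\epsilon_0}=X\cap B_{\epsilon_0}$, which is then a bilipschitz cone by condition~(1). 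That is a small but genuine shortcut, and both are valid; you are right to flag this as the place requiring care, and your verification is sound.

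There is one gap in your combinatorial reformulation, though it is shared with the paper's bare phrasing of the corollary. From ``exactly one node, which is the \L-node'' you conclude that every Tjurina component is a dangling bamboo; this tacitly assumes $\Gamma$ is a tree. If $\Gamma$ contains a cycle with exactly one Euler weight $\le -3$ (a cusp singularity with a single \L-node, as in Example~13.4 of the paper), then deleting the unique \L-node leaves a string attached to it at both ends; both endpoints have valence two in $\Gamma$, so the string is not a bamboo and is a thin piece, even though $\Gamma$ has only one node. The reformulation should really say that $\Gamma$ is star-shaped with the \L-node at its centre (equivalently, is a tree with a single node); under that reading your ``dangling bamboo'' argument and your shortest-path argument for the forward direction both go through.
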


\begin{example} \label{ex:conical} Let $(X,0) \subset (\C^3,0)$,
  defined by $x^a+y^b+z^b = 0$ with $1\leq a<b$.  Then the graph
  $\Gamma$ is star-shaped with $b$ bamboos and the single \L-node is
  the central vertex. Therefore, $(X,0)$ is metrically conical. This
  metric conicalness was first proved in \cite{BFN2}. The rational
  singularities which are metrically conical have been determined by
  Pedersen \cite{pedersen}; they form an interesting three discrete
  parameter family.
\end{example}

\section{Existence and uniqueness of the minimal thick-thin
  decomposition}
\label{sec:uniqueness}

We first prove the existence:
\begin{lemma}\label{le:minimal}
  The thick-thin decomposition constructed in Section
  \ref{sec:thick-thin} is minimal, as defined in Definition
  \ref{def:minimal}.
\end{lemma}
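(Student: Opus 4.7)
The plan is to verify, for the decomposition $\bigcup Y_i \cup \bigcup Z_j$ of Section \ref{sec:thick-thin}, the two conditions in Definition \ref{def:minimal}, with $\bigcup Y'_i \cup \bigcup Z'_k$ denoting an arbitrary competing thick-thin decomposition.

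For the tangent-cone containment $T_0(\bigcup Z_j)\subset T_0(\bigcup Z'_k)$, I would fix a thin piece $Z_j$ and, by Proposition \ref{prop:thin1}, identify its tangent cone with the exceptional tangent line $L_j$. It then suffices to show every real unit vector $v\in L_j$ lies in $T_0(\bigcup Z'_k)$. Assuming the contrary, there is an open real cone $W$ about $v$ disjoint from the closed set $T_0(\bigcup Z'_k)$, and Proposition \ref{prop:thin nh} applied to $\bigcup Z'_k$ gives $W\cap\bigcup Z'_k\cap B_\epsilon=\emptyset$ for all sufficiently small $\epsilon$. By Theorem \ref{th:thin has fast loops}(2), I would produce a fast loop family $\{\gamma_\epsilon\}\subset Z_j$ tangent at $0$ to $v$; since its length is $o(\epsilon)$ while the radial directions of its points converge to $v$, for $\epsilon$ small one has $\gamma_\epsilon\subset W\cap B_\epsilon\subset\bigcup Y'_i$. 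Invoking Proposition \ref{prop:fast loops} applied to the thick (hence metrically conical) piece $Y'_{i_0}$ containing $\gamma_\epsilon$ then contradicts the defining property of a fast loop, yielding the desired containment.

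For the minimality of the piece count, I observe that any competitor satisfying condition (1) of Definition \ref{def:minimal} must have its thin tangent cone equal to $\bigcup_j L_j$, by symmetric application of the containment already established. The lines $L_j$ are distinct, so the connected components of $\bigcup_j L_j\setminus\{0\}$ are precisely $L_1\setminus\{0\},\dots,L_r\setminus\{0\}$. Definition \ref{def:thick-thin}\eqref{it:2} ensures each $Z'_k\cap S_\epsilon$ is connected, whence $T_0 Z'_k\setminus\{0\}$ is connected as a Hausdorff limit of connected subsets of unit spheres, and therefore sits inside a single $L_{j(k)}\setminus\{0\}$. Since $\bigcup_k L_{j(k)}=\bigcup_j L_j$, the assignment $k\mapsto j(k)$ is surjective, so the competitor has at least $r$ thin pieces.

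The principal obstacle is the last step of the tangent-cone containment: to localize $\gamma_\epsilon$ inside a single thick piece $Y'_{i_0}$ one must control any crossings of boundary tori between adjacent thick pieces within $W$ (using that $W$ avoids the thin part of the competitor), and then verify carefully that in a $K$-bilipschitz metric cone a loop of length $o(\epsilon)$ at distance $\sim\epsilon$ from the vertex is null-homotopic there with bounded isoperimetric ratio, contradicting both first-kind and second-kind manifestations of the fast-loop property of $\gamma_\epsilon$ in the ambient $X$. A secondary subtlety used in the piece count is the distinctness of the exceptional tangent lines $L_j$, which I would justify from the explicit parametrization $(t,\lambda_2 t,\dots,\lambda_n t)$ of $L_j$ obtained in the proof of Proposition \ref{prop:thin1}.
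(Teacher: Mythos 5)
Your proof of the tangent-cone containment follows essentially the same route as the paper's: produce a fast loop $\{\gamma_\epsilon\}$ in $Z_j$ tangent to a given direction $v\in L_j$, argue that if $v$ avoided the competitor's thin tangent cone then $\gamma_\epsilon$ would be trapped in a metrically conical region, and derive a contradiction from the fact that short essential loops cannot live in a $K$-bilipschitz cone. The obstacle you flag at the end is genuine, and in fact the paper elides it too: the conclusion ``$\gamma_\epsilon\subset W\cap B_\epsilon\subset\bigcup Y'_i$'' only places $\gamma_\epsilon$ in the thick part, not in any fixed conical piece $Y'_{\epsilon_1}$. Your parenthetical ``thick (hence metrically conical)'' is the sticking point and is actually false as stated: a thick germ in the sense of Definition~\ref{def:thick} is not a metrically conical germ (its boundary tori shrink super-linearly, which a bilipschitz cone cannot do); one must descend to one of the subcones $Y'_{\epsilon_1}$ and then argue that $\gamma_\epsilon$ lands inside it, which is the delicate step. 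Since Proposition~\ref{prop:fast loops} is stated for essential loops in the link of the whole germ, the clean version is: once $\gamma_\epsilon$ lies in a fixed $K$-bilipschitz cone at ambient distance $\sim\epsilon$ with length $o(\epsilon)$, it is contractible inside that cone and hence null-homotopic in $X^{(\epsilon)}$, contradicting that it is a fast loop of the first kind (so second-kind considerations are not even needed, since the loops from Theorem~\ref{th:thin has fast loops} are essential).

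For the piece count your argument genuinely differs from the paper's and is not quite as robust. You pass to the tangent cone, use connectedness of each $T_0 Z'_k\setminus\{0\}$ to land it inside a single $L_{j(k)}\setminus\{0\}$, and conclude $r'\ge r$ by surjectivity. This requires the exceptional lines $L_1,\dots,L_r$ to be pairwise distinct, which you flag but do not prove, and which does not simply drop out of the parametrization $(t,\lambda_2 t,\dots,\lambda_n t)$: two distinct Tjurina components could a priori produce the same tangent line (e.g.\ when the tangent cone $T_0X$ is not smooth or not reduced over a given direction). The paper's counting avoids this entirely: it observes that the sets $\pi(N(\Gamma_j))$, $j=1,\dots,r$, are pairwise disjoint away from $0$, that each contains fast loops, and hence that each must contain a whole thin piece $Z'_i$ of the competitor (as a germ); disjointness then immediately gives $r\le r'$ without any input about the $L_j$'s. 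If you wish to keep your tangent-cone approach you should either prove the $L_j$ are distinct, or replace the count on $\bigcup L_j\setminus\{0\}$ by a count on the pairwise-disjoint germs $\pi(N(\Gamma_j))$ as the paper does.
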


\begin{proof} 
  Any real tangent line to $Z_j$ is a tangent line to some fast loop
  $\{\gamma_\epsilon\}_{0<\epsilon\le \epsilon_0}$ by Theorem
  \ref{th:thin fast loops}. For any other thick-thin decomposition
  this fast loop is outside any conical part of the thick part for all
  sufficiently small $\epsilon$ so its tangent line is a tangent line
  to a thin part. It follows that this thick-thin decomposition
  contains a thin piece which is tangent to the tangent cone of $Z_j$.
  Thus the first condition of minimality is satisfied. For the
    second condition, note that, according to the proof of Proposition \ref{prop:thick},   $\bigcup_{j=1}^sN(\Gamma_j)$  has
    conical complement and that the $N(\Gamma_j)$'s are pairwise
    disjoint except at the origin and each contains its respective
    $Z_j$. Consider some thick-thin decomposition of $(X,0)$ with thin
    pieces $Z'_i$, $i=1,\dots,s'$. Each $N(\Gamma_j)$ must have a
    $Z'_i$ in it since each $N(\Gamma_j)$ contains fast loops, and
    this $Z'_i$ is completely inside $N(\Gamma_j)$ (as a germ at 0)
    since its tangent space is contained in the tangent line to
    $Z_j$. Thus $s\le s'$.
\end{proof}

We restate the Uniqueness Theorem of the Introduction: 

\begin{theorem*}[\ref{th:uniqueness}] For any
    two minimal thick-thin decompositions of $(X,0)$ there exists $q>1$ and a
    homeomorphism of the germ $(X,0)$ to itself which takes one
    decomposition to the other and moves each $x\in X$ distance at
    most $|x|^q$.
\end{theorem*}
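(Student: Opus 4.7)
The plan is to reduce the theorem to a controlled ambient isotopy problem on each link $X^{(\epsilon)}$, and then cone the resulting family of isotopies into a germ homeomorphism that satisfies the metric estimate. Given two minimal thick-thin decompositions $\{Y_i,Z_j\}$ and $\{Y'_k,Z'_\ell\}$, I first invoke Lemma \ref{le:minimal} with Definition \ref{def:minimal}(1) to conclude that their thin parts share the same tangent cone. By Proposition \ref{prop:thin1}, each connected thin piece is tangent to a single exceptional line. Combined with Definition \ref{def:minimal}(2), this forces the two collections of tangent lines to coincide with the same cardinality, so after relabeling I may assume $Z_j$ and $Z'_j$ are tangent to the same exceptional line $L_j$. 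Applying Proposition \ref{prop:thin nh} to each $Z_j$ and each $Z'_j$ and taking a common exponent yields constants $q>1$ and $c>0$ such that both $Z_j\cap B_\epsilon$ and $Z'_j\cap B_\epsilon$ lie in the horn neighborhood
\[
H_j(\epsilon):=\bigl\{x\in X\cap B_\epsilon : d(x,L_j)\le c\epsilon^q\bigr\},
\]
and the $H_j$ are pairwise disjoint away from the origin.

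Next, I would work on the link $X^{(\epsilon)}$. Each thick zone is Seifert fibered and each thin zone is a graph manifold that is not a solid torus (Theorem \ref{th:main in intro}), so every torus separating a thick from a thin zone is essential in $X^{(\epsilon)}$. Hence both decompositions are coarsenings of the JSJ decomposition of $X^{(\epsilon)}$, and each of $Z_j^{(\epsilon)}$ and $Z_j'^{(\epsilon)}$ is a union of JSJ pieces. Using Theorem \ref{th:thin has fast loops} and Proposition \ref{prop:fast boundaries}, the JSJ pieces that make up $Z_j^{(\epsilon)}$ are exactly those carrying fast loops tangent to $L_j$; this characterization is topologically intrinsic to $X^{(\epsilon)}$, so the JSJ pieces comprising $Z_j^{(\epsilon)}$ and $Z_j'^{(\epsilon)}$ coincide up to isotopy in $X^{(\epsilon)}$. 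By JSJ uniqueness (\cite{JSJ,NS}), there is an ambient isotopy $h_\epsilon\colon X^{(\epsilon)}\to X^{(\epsilon)}$ carrying $Z_j^{(\epsilon)}$ to $Z_j'^{(\epsilon)}$ for every $j$. Since source and target of each JSJ piece both lie inside the slice $H_j(\epsilon)\cap S_\epsilon$, a standard regular-neighborhood uniqueness argument in the semi-algebraic category lets me refine $h_\epsilon$ so that it is supported in $\bigcup_j H_j(\epsilon)\cap S_\epsilon$, and therefore moves every point a distance at most $2c\epsilon^q$.

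Finally, to upgrade the family $\{h_\epsilon\}$ to a germ homeomorphism I would arrange, using semi-algebraicity of both decompositions, that the tori, JSJ pieces, and isotopies all vary in smooth semi-algebraic families in $\epsilon$. Defining $h(x):=h_{\|x\|}(x)$ on a small punctured ball and $h(0):=0$ yields a homeomorphism of $(X,0)$ to itself that interchanges the two decompositions, with
\[
\|h(x)-x\|\le 2c\|x\|^q.
\]
Absorbing the constant $2c$ into the exponent by replacing $q$ with any $q'\in(1,q)$ gives the displacement bound $\|h(x)-x\|\le\|x\|^{q'}$ on a small enough neighborhood of the origin, which is the statement of the theorem.

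The step I expect to be the main obstacle is the identification of $Z_j^{(\epsilon)}$ and $Z_j'^{(\epsilon)}$ as the \emph{same} union of JSJ pieces of $X^{(\epsilon)}$, not merely unions of the same topological type. Minimality alone only controls tangent cones and piece counts, so one must combine the horn localization from Proposition \ref{prop:thin nh} with the intrinsic fast-loop characterization of the thin pieces to prevent any $Z'_j$ from swapping one JSJ piece for a topologically similar one situated away from $L_j$. The parametric smoothness in $\epsilon$ needed for the final assembly is a routine consequence of semi-algebraic triviality, but must be handled with care to keep the displacement estimate uniform down to the origin.
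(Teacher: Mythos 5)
Your proposal takes a genuinely different route from the paper. You try to reduce the uniqueness to the JSJ uniqueness theorem for $X^{(\epsilon)}$, arguing that both thick--thin decompositions are coarsenings of the JSJ decomposition, and then apply an ambient isotopy supported in horn neighborhoods. The paper instead exploits the thick structure directly: from Definition \ref{def:thick} it extracts, for both decompositions, the side boundaries $\partial_0 Y_\epsilon$ of the conical approximations, nests them $\partial_0 Y_{\epsilon_0}\supset\partial_0 Y'_{\epsilon_1}\supset\partial_0 Y_{\epsilon_2}\supset\partial_0 Y'_{\epsilon_3}$ for a carefully chosen chain of radii, and recognizes the region $M_2$ between the middle two as an invertible bordism; Stallings' h-cobordism theorem (or Waldhausen's theorem on surfaces in $F\times I$) then forces $M_2$ to be a product $T^2\times I$, so the links of $Z$ and $Z'$ are homeomorphic by inclusion up to a collar, and the displacement estimate comes immediately from the thin-part localization $q'$.

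There is a concrete gap in your plan at the JSJ step. You assert that because each thin zone is not a solid torus, ``every torus separating a thick from a thin zone is essential in $X^{(\epsilon)}$.'' That controls only one side of the torus: a torus in a negative definite plumbed $3$-manifold is compressible iff some component of the complement is a solid torus, and the \emph{thick} side is a solid torus precisely in the cases the paper has to handle explicitly (lens spaces and the Seifert-fibered spherical links; see the case analysis after Lemma \ref{le: core criterion}, where the thick zones are solid tori). In those cases the thick--thin decomposition is not a coarsening of the JSJ decomposition at all, and JSJ uniqueness cannot be invoked to match $Z_j^{(\epsilon)}$ with $Z_j'^{(\epsilon)}$. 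A second, related weak point is your identification of the JSJ pieces in $Z'_j{}^{(\epsilon)}$ as ``exactly those carrying fast loops tangent to $L_j$'': Theorem \ref{th:thin has fast loops} and Proposition \ref{prop:fast boundaries} are proved for the canonical $Z_j$, and deducing from them that the alternative $Z'_j$ must contain those same fast loops (rather than only meeting the horn neighborhood $H_j$) is essentially the content one still needs to prove; the paper's nested invertible-bordism argument is precisely what makes this step go through without presupposing incompressibility. Either you would need to repair the isotopy argument by a direct case analysis of the compressible situations, or switch to the nesting/h-cobordism route the paper uses, which bypasses JSJ uniqueness entirely.
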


\begin{proof}It follows from the proof of Lemma \ref{le:minimal} that
  any two minimal thick-thin decompositions have the same numbers of
  thick and thin pieces. Let us consider the thick-thin decomposition
  $$(X,0)=\bigcup_{i=1}^r(Y_i,0)\cup\bigcup_{j=1}^s(Z_j,0)$$ constructed
  in Section \ref{sec:thick-thin} and another minimal thick-thin
  decomposition
  $$(X,0)=\bigcup_{i=1}^r(Y'_i,0)\cup\bigcup_{j=1}^s(Z'_j,0)\,.$$   
  They can be indexed so that for each $j$ the intersection $Z_j\cap
  Z_j'\cap (B_\epsilon\setminus\{0\})$ is non-empty for all small
  $\epsilon$ (this is not hard to see, but in fact we only need that
  $Z_j$ and $Z'_j$ are very close to each other in the sense that
  the distance between $Z_j\cap S_\epsilon$ and $Z'_j\cap S_\epsilon$
  is bounded by $c\epsilon^{q'}$ for some $c>0$ and $q'>1$, which is
  immediate from the proof of Lemma \ref{le:minimal}).

  Definition \ref{def:thick} says $Y=\bigcup_iY_i$ is the union of
  metrically conical subsets $Y_\epsilon\subset B_\epsilon$,
  $0<\epsilon\le \epsilon_0$. We will denote by
  $\partial_0Y_\epsilon:=\overline{\partial Y_\epsilon\setminus(Y_\epsilon\cap
  S_\epsilon)}$, the ``sides'' of these cones, which form a disjoint
  union of cones on tori. We do the same for  $Y'=\bigcup_iY'_i$.

  The limit as $\epsilon\to 0$ of the tangent cones of any component
  of $\partial_0Y_\epsilon$ is the tangent line to a $Z_j$. It follows
  that for all $\epsilon_1$ sufficiently small
  $\partial_0Y'_{\epsilon_1}$ will be ``outside'' $\partial_0
  Y_{\epsilon_0}$ in the sense that is disjoint from $Y_{\epsilon_0}$
  except at $0$. So choose $\epsilon_1\le\epsilon_0$ so this is
  so. Similarly, choose $0<\epsilon_3\le\epsilon_2\le\epsilon_1$ so
  that $\partial_0 Y_{\epsilon_2}$ is outside $\partial_0 Y'_{\epsilon_1}$
  and $\partial_0 Y'_{\epsilon_3}$ is outside $\partial_0 Y_{\epsilon_2}$.

  Now consider the $3$-manifold $M\subset S_{\epsilon_3}$ consisting
  of what is between $\partial_0Y_{\epsilon_0}$ and
  $\partial_0Y'_{\epsilon_3}$ in $X\cap S_{\epsilon_3}$. We can write
  $M$ as $M_1\cup M_2\cup M_3$ where $M_1$ is between
  $\partial_0Y_{\epsilon_0}$ and $\partial_0Y'_{\epsilon_1}$, $M_2$
  between $\partial_0Y'_{\epsilon_1}$ and $\partial_0Y_{\epsilon_2}$
  and $M_3$ between $\partial_0Y_{\epsilon_2}$ and
  $\partial_0Y'_{\epsilon_3}$.  Each component of $M_1\cup M_2$ is
  homeomorphic to $\partial_1M_2\times I$ and each component of
  $M_2\cup M_3$ is homeomorphic to $\partial_0M_2\times I$, so it
  follows by a standard argument that each component of $M_2$ is
  homeomorphic to $\partial_0M_2\times I$ ($M_2$ is an invertible bordism
  between its boundaries and apply Stalling's h-cobordism theorem
  \cite{stallings}; alternatively, use  Waldhausen's classification of
  incompressible surfaces in {\it surface}$\times I$
  in \cite{waldhausen}).

  Set $Z=\bigcup_{j=1}^s Z_j$ and $Z'=\bigcup_{j=1}^s Z'_j$.  The
  complement of the $\epsilon_3$-link of $Y_{\epsilon_2}$ in
  $X^{(\epsilon_3)}$ is the union of $Z^{(\epsilon_3)}$ and a collar
  neighborhood of its boundary and is hence isotopic to
  $Z^{(\epsilon_3)}$. It is also isotopic to its union with $M_2$
  which is the complement of the $\epsilon_3$-link of
  $Y'_{\epsilon_1}$. This latter is $(Z')^{(\epsilon_3)}$ union a
  collar, and is hence isotopic to $(Z')^{(\epsilon_3)}$. Thus the
  links of $Z$ and $Z'$ are homeomorphic, so $Z$ and $Z'$ are
  homeomorphic, and we can choose this homeomorphism to move any point
  $x$ distance at most $|x|^q$ for any $q<q'$, where $q'$ is defined
  above.
\end{proof}

We can also characterize, as in the following theorem, the unique
minimal thick-thin decomposition in terms of the analogy with the
Margulis thick-thin decomposition mentioned in the Introduction. The
proof is immediate from the proof of Lemma \ref{le:minimal}.

\begin{theorem}\label{th:M uniqueness}  
  The canonical thick-thin decomposition can be characterized among
  thick-thin decompositions by the following condition: For
  any sufficiently small $q> 1$ there exists $\epsilon_0>0$ such
  that for all $\epsilon\le \epsilon_0$ any point $x$ of the thin part
  with $|x|<\epsilon$ is on an essential loop in $X\setminus \{0\}$ of
  length $\le |x|^q$.\qed
\end{theorem}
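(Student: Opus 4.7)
The plan is to prove both implications of the characterization, both of which follow from material already established. First I would verify that the canonical thick-thin decomposition of Section~\ref{sec:thick-thin} satisfies the Margulis condition. Given any point $p$ in a thin piece $Z_j$, Theorem~\ref{th:thin has fast loops}(2) provides a fast loop $\{\gamma_\epsilon\}_{0<\epsilon\le\epsilon_0}$ of the first kind passing through $p$; moreover, tracing through its construction via the contracting vector field of Proposition~\ref{prop:thin1}(3) (combined with the Milnor-fiber diameter bound of Proposition~\ref{prop:thin1}(2)), the loop $\gamma_{|p|}$ is essential in $X\setminus\{0\}$ and has length at most $c_j|p|^{q_j}$ for some $c_j>0$ and $q_j>1$ depending only on $Z_j$. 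Taking $q_0:=\min_j q_j$, then for any $q$ with $1<q<q_0$ and any sufficiently small $\epsilon_0$, we get $c_j|p|^{q_j}\le |p|^q$ for all $p$ of the thin part with $|p|<\epsilon_0$, which is precisely the Margulis condition.

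For the converse direction, I would argue as in the proof of Lemma~\ref{le:minimal}. Suppose $\mathcal D'$ is another thick-thin decomposition of $(X,0)=\bigcup Y'_i\cup\bigcup Z'_k$ satisfying the Margulis condition. The condition asserts that every point of $\bigcup Z'_k$ lies on an essential loop of length $\le |p|^q$. By Proposition~\ref{prop:fast loops}, such loops cannot occur in a metrically conical region: inside a thick zone $Y'_i$, bilipschitz equivalence to a standard cone forces essential loops through $p$ (essential in the Seifert piece, hence in $X^{(\epsilon)}$ by $\pi_1$-injectivity of JSJ pieces) to have length bounded below by a linear function of $|p|$. Consequently, the tangent cone of $\bigcup Z'_k$ contains every direction along which fast loops accumulate, which by the proof of Lemma~\ref{le:minimal} (applied in reverse) means it contains the tangent cone of the canonical thin part $\bigcup Z_j$.

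Combined with the inclusion in the opposite direction furnished by Lemma~\ref{le:minimal} itself, the two tangent cones coincide; a parallel counting argument, based on the pairing of thin pieces of $\mathcal D'$ with subgraphs $N(\Gamma_j)$ from the construction, matches the number of thin pieces as well. Thus $\mathcal D'$ is a minimal thick-thin decomposition in the sense of Definition~\ref{def:minimal}, so by Theorem~\ref{th:uniqueness} it agrees with the canonical decomposition up to a homeomorphism moving each point $x$ distance at most $\|x\|^q$. The main subtle point in this argument is ensuring that a thick point cannot satisfy the Margulis bound even via loops that temporarily leave the thick zone; this is handled by the observation that any essential curve in $X\setminus\{0\}$ can be homotoped into a JSJ piece of the link and that the linear lower bound on essential loop length inside a conical thick piece survives this homotopy up to a bounded multiplicative constant.
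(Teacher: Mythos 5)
Your approach is essentially the one the paper intends, namely to read both implications off the material around Lemma~\ref{le:minimal} and Theorem~\ref{th:thin has fast loops}; the paper itself says only that the proof is ``immediate from the proof of Lemma~\ref{le:minimal}.'' Your first paragraph (the canonical decomposition satisfies the Margulis-type condition) is correct and matches what the paper is alluding to: fast loops through each thin point, with rate $q_j$, give the bound $\le |p|^q$ for any $q < \min_j q_j$ once $\epsilon_0$ is small.

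There is, however, a logical tangle in your converse direction that is worth fixing. You first argue (via Proposition~\ref{prop:fast loops}) that the tangent cone of $\bigcup Z'_k$ \emph{contains} the fast-loop directions, hence contains $T_0\bigl(\bigcup Z_j\bigr)$; and then you invoke ``the inclusion in the opposite direction furnished by Lemma~\ref{le:minimal} itself.'' But Lemma~\ref{le:minimal} furnishes exactly the same containment again ($T_0\bigl(\bigcup Z_j\bigr)\subseteq T_0\bigl(\bigcup Z'_k\bigr)$ for any thick-thin decomposition), so as written you have argued $\supseteq$ twice and never actually derived $\subseteq$. The $\subseteq$ direction is the one that must come from the Margulis hypothesis: every thin point of $\mathcal D'$ carries a short essential loop, and (your ``subtle point'') no point of the canonical thick part can carry one, so the thin part of $\mathcal D'$ lies inside the canonical thin part and its tangent cone is therefore contained in $T_0\bigl(\bigcup Z_j\bigr)$. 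You do flag the subtle point, but you present it as an afterthought rather than as the step that actually produces the missing inclusion; rewiring the paragraph so that the Margulis hypothesis visibly yields $T_0\bigl(\bigcup Z'_k\bigr)\subseteq T_0\bigl(\bigcup Z_j\bigr)$ and Lemma~\ref{le:minimal} yields the reverse containment would make the argument correct as stated. The treatment of the subtle point itself (homotoping into JSJ pieces with controlled length) is plausible but sketchy; a cleaner route is to observe that a loop of length $\le|p|^q$ with $q>1$ lies in a ball of radius $o(|p|)$ about $p$ and hence, for $p$ bounded away (linearly) from the thick/thin interface, stays inside a metrically conical region where it must be null-homotopic.
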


In fact one can prove more (but we will not do so here): the set of
points which are on essential loops as in the above theorem gives
the thin part of a minimal thick-thin decomposition when intersected
with a sufficiently small $B_\epsilon$. 

We have already pointed out that the thick-thin decomposition is an
invariant of semi-algebraic bilipschitz homeomorphism. But in fact, if
one makes an arbitrary $K$-bilipschitz change to the metric on $X$,
the thin pieces can still be recovered up to homeomorphism using
the construction of the previous paragraph plus some $3$-manifold
topology to tidy up the result. Again, we omit details.

\section{Metric tangent cone}\label{sec:tangent cone}

The thick-thin decomposition gives a description of the metric tangent
cone of Bernig and Lytchak \cite{BL} for a normal complex surface
germ. The \emph{metric tangent cone} $\mathcal T_0A$ is defined for
any real semi-algebraic germ $(A,0)\subset (\R^N,0)$ as the
Gromov-Hausdorff limit  $$\mathcal T_0A:=\ghlim_{\epsilon\to
  0}\,(\frac1\epsilon A,0)\,,$$ where $\frac1\epsilon A$ means $A$ with
its inner metric scaled by a factor of $\frac1\epsilon$.

As a metric germ, $\mathcal T_0A$ is the strict cone on its link
$\mathcal T_0A^{(1)}:=\mathcal T_0A\cap S_1$, and 
$$\mathcal T_0A^{(1)}=\ghlim_{\epsilon\to 0}\frac1\epsilon
A^{(\epsilon)}\,,$$ where $\frac 1\epsilon A^{(\epsilon)}$ is the link
of radius $\epsilon$ of $(A,0)$ scaled to lie in the unit sphere
$S_1\subset \R^N$.

Applying this to $(X,0)\subset \C^n$, we see that the thin zones
collapse to circles as $\epsilon\to 0$. In particular, the boundary
tori of each Seifert manifold link $\frac1\epsilon Y_i^{(\epsilon)}$
of a thick zone collapse to the same circles, so that in the limit we
have a ``branched Seifert manifold'' (where branching of $k>1$ sheets
of the manifold meeting along a circle occurs if the collapsing map of
a boundary torus to $S^1$ has fibers consisting of $k>1$ circles).
Therefore, the link $\mathcal T_0X^{(1)}$ of the metric tangent cone is
the union of the branched Seifert manifolds glued along the circles to
which the thin zones have collapsed. 

The ordinary tangent cone $T_0X$ and its link $T_0X^{(1)}$ can be
similarly constructed as the Hausdorff limit of $\frac1\epsilon X$
resp.\ $\frac1\epsilon X^{(\epsilon)}$ (as embedded metric spaces in
$\C^n$ resp.\ the unit sphere $S_1$). In particular, there is a
canonical finite-to-one projection $\mathcal T_0X\to T_0X$ (described
in the more general semi-algebraic setting in \cite{BL}), whose degree
over a general point $p\in T_0X$ is the multiplicity of $T_0X$ at that
point. This is a branched cover, branched over the exceptional tangent
lines in $T_0X$.  

The circles to which thin zones collapse map to the links of some of
these exceptional tangent lines, but there can also be branching in
the part of $\mathcal T_0X$ corresponding to the thick part of $X$;
such branching corresponds to bamboos on \L-nodes of the resolution of
$X$ of Section \ref{sec:thick-thin} or to basepoints of
  the family of polars which are smooth points of $\pi^{-1}(0)$ on
  \L-curves. Summarizing:

\begin{theorem}
  The metric tangent cone $\mathcal T_0X$ is a cover of the
  tangent cone $T_0X$ branched over some of the exceptional lines in
  $T_0X$. It has a natural complex structure (lifted from
  $T_0X$) as a non-normal complex surface. Removing a finite set of
  complex lines (corresponding to thin zones) results in a complex
  surface which is homeomorphic to the interior of the thick part of
  $X$ by a homeomorphism which is bilipschitz outside an arbitrarily
  small cone neighborhood of the removed lines.\qed
\end{theorem}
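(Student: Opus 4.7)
The plan is to read off $\mathcal{T}_0 X$ directly from the thick-thin decomposition of $(X,0)$ by analyzing the Gromov--Hausdorff behaviour of each piece under the rescaling $A\mapsto \frac{1}{\epsilon}A$, then gluing the limits and identifying the result with a branched cover of $T_0 X$ equipped with the required complex and bilipschitz structure.

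First I would handle the thin pieces. By Proposition~\ref{prop:thin1}, each $Z_j$ is contained in a $c_j\epsilon^{q_j}$--neighborhood of an exceptional tangent line $L_j$ with $q_j>1$, so the rescaled link $\frac{1}{\epsilon}Z_j^{(\epsilon)}$ has diameter $O(\epsilon^{q_j-1})\to 0$ and Gromov--Hausdorff collapses onto the circle $L_j\cap S_1$. For each thick piece $Y_i$, Definition~\ref{def:thick} provides a nested family of $K$--bilipschitz metric cones on a fixed link, so $\ghlim \frac{1}{\epsilon} Y_i$ is the genuine metric cone on $\ghlim \frac{1}{\epsilon} Y_i^{(\epsilon)}$. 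The relevant boundary tori of the rescaled thick links collapse onto the same circles as the adjacent thin-piece limits, since the Milnor fibers of $Z_j$ shrink faster than linearly by Theorem~\ref{th:main in intro}\,\eqref{it:fibration} and the isoperimetric control from Theorem~\ref{th:fast loops} forces agreement with the collapsing on the thick side.

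I would then assemble $\mathcal{T}_0 X^{(1)}$ by gluing these limits along the common collapse circles, obtaining precisely the branched Seifert manifold described in the discussion preceding the theorem, and take the cone to recover $\mathcal{T}_0 X$. For the map $\mathcal{T}_0 X\to T_0 X$, note that each thick piece $Y_i$ has well-defined tangent cone $T_0 Y_i\subset \C^n$, a complex cone (possibly with several sheets), and the canonical finite-to-one projection of \cite{BL} restricts on each thick piece to the evident map onto $T_0 Y_i$. Branching in the target $T_0 X$ arises from two sources: bamboos attached to an \L-node contribute sheets whose tangent line coincides with that of the \L-node but whose metric cone is distinct, giving branch lines under the thick region; and thin pieces collapse onto circles over the exceptional tangent lines $L_j$, giving branch lines under the collapse locus. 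Lifting the complex structure of $T_0 X$ sheet by sheet to each $T_0 Y_i$ and declaring the sheets to be glued along the branch locus produces the structure of a non-normal complex surface on $\mathcal{T}_0 X$ whose normalization is $\bigsqcup_i T_0 Y_i$.

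Finally, for the last assertion, removing the finite union of complex lines in $\mathcal{T}_0 X$ corresponding to the collapse circles of thin zones leaves the disjoint union of the open thick cones $T_0 Y_i\setminus\{\text{collapse lines}\}$. The metrically conical parameterization provided by Definition~\ref{def:thick} gives a homeomorphism from this open complex surface back to the interior of the thick part $\bigcup_i Y_i\subset X$, and this map is $K$--bilipschitz on the complement of any prescribed cone neighborhood of the removed lines since the bilipschitz constants of the cones $Y_{i,\epsilon}$ are uniformly bounded by $K$. The hard part will be verifying that the sheet-by-sheet complex structures actually glue along the branch locus to give a global (non-normal) complex surface rather than merely a topological identification: this requires a careful analysis of how the $T_0 Y_i$ meet over the exceptional tangent lines $L_j$ and along the bamboo branch loci, in particular using the equisingular family of polars constructed in the proof of Corollary~\ref{cor:thin discriminant} to ensure that the analytic gluing parameters on either side of a branch line are compatible.
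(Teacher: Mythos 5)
Your proposal follows the same route as the paper's own (expository) derivation: rescale the thick–thin decomposition, observe that thin zones collapse Gromov–Hausdorff onto circles over exceptional tangent lines while thick zones converge to metric cones whose boundary tori collapse along with the adjacent thin zones, giving a branched Seifert link, and then identify the finite‐to‐one projection to $T_0X$ as a branched cover. The "hard part" you flag at the end (analytic gluing of the lifted complex structure along the branch locus) is in fact also left at the level of assertion in the paper, which states the complex structure is simply "lifted from $T_0X$" without further verification; your minor invocations of isoperimetric control and the description of bamboo‐induced branching are slightly off the mark but do not affect the overall argument, which matches the paper's.
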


\vglue 10pt plus 6pt minus 6pt\goodbreak\vglue 0pt plus 6pt minus 6pt
\centerline{\bf Part II: The bilipschitz classification}

\bigskip The remainder of the paper builds on the thick-thin
decomposition to complete the bilipschitz classification, 
as described in Theorem
\ref{th:classification}.

\section{ The refined decomposition of $(X,0)$ }\label{sec:decomp}
The decomposition of $(X,0)$ for the Classification Theorem
\ref{th:classification} was described there in terms of the link
$X^{(\epsilon)}$ as follows: first refine the thick-thin decomposition
$X^{(\epsilon)}=\bigcup_{i=1}^rY^{(\epsilon)}_i\cup\bigcup_{j=1}^sZ^{(\epsilon)}_j$
by decomposing each thin zone $Z_j^{(\epsilon)}$ into its JSJ
decomposition (minimal decomposition into Seifert fibered manifolds
glued along their boundaries) while leaving the thick zones
$Y_i^{(\epsilon)}$ as they are; then thicken some of the gluing
tori of this refined decomposition to collars $T^2\times I$. We
will call the latter \emph{special annular} pieces.

In this section we describe where these special annular pieces are
added, to complete the description of the data \eqref{it:1.9.1} of
Theorem \ref{th:classification}.

We consider a generic linear projection $\ell\colon (X,0) \to
(\C^2,0)$ and we denote again by $\Pi$ its polar curve and by $\Pi^*$
the strict transform with respect to the resolution defined in Section
\ref{sec:thick-thin}.  Before adding the special annular pieces, each
piece of the above refined decomposition is either a thick zone,
corresponding to an \L-node of our resolution graph $\Gamma$, or a
Seifert fibered component of the minimal decomposition of a thin zone,
which corresponds to a \T-node of $\Gamma$.  The incidence graph for
this decomposition is the graph whose vertices are the nodes of
$\Gamma$ and whose edges are the maximal strings $\sigma$ between
nodes. We add a special annular piece corresponding to a string
$\sigma$ if and only if $\Pi^*$ meets an exceptional curve belonging
to this string.

This refines the incidence graph of the decomposition by adding a
vertex on each edge which gives a special annular piece.  As in the
Introduction, we call this graph $\Gamma_0$. We then have a
decomposition
\begin{equation}\label{eq:pre}
   X^{(\epsilon)}=\bigcup_{\nu\in V(\Gamma_0)} M_\nu^{(\epsilon)} \,,
\end{equation}
into Seifert fibered components, some of which are special annular.

In the next section we will use this decomposition, together with the
additional data described in Theorem \ref{th:classification}, to
construct a bilipschitz model for $(X,0)$. To do so we will need to
modify the decomposition \eqref{eq:pre} by replacing each separating torus of
the decomposition by a toral annulus $T^2\times I$:  
\begin{equation}
  \label{eq:fine_decomp}
    X^{(\epsilon_0)} = \bigcup_{\nu \in V(\Gamma_0)}M_\nu^{(\epsilon_0)} 
\cup \bigcup_{\sigma \in E(\Gamma_0)}A_{\sigma}^{(\epsilon_0)}\,.
\end{equation}

At this point the only data of Theorem \ref{th:classification} which
have not been described are the weights $q_\nu$ of part
\eqref{it:1.9.3}.  These are certain Puiseux exponents of the
discriminant curve $\Delta$ of the generic plane projection of $X$, and
will be revealed in sections \ref{sec:carrousel} and
\ref{sec:carrousel1}, where we show that $X$ is bilipschitz
homeomorphic to its bilipschitz model. Finally, we prove that the data
are bilipschitz invariants of $(X,0)$ in Section \ref{sec:invariance}.

\section{The bilipschitz model of $(X,0)$}\label{sec:model}

We describe how to build a bilipschitz model for a normal surface
germ $(X,0)$ by gluing individual pieces using the data of Theorem
\ref{th:classification}.  Each piece will be topologically the cone on
some manifold $N$ and we call the subset which is the cone on
$\partial N$ the \emph{cone-boundary} of the piece. The pieces will be
glued to each other along their cone-boundaries using isometries. We
first define the pieces.

\begin{definition}[$A(q,q')$]\label{def:p2}
Here $1\le q< q'$ are rational numbers. 

Let $A$ be the euclidean annulus $\{(\rho,\psi):1\le \rho\le 2,\, 0\le
\psi\le 2\pi\}$ in polar coordinates and for $0<r\le 1$  let
$g^{(r)}_{q,q'}$ be the metric on $A$:
$$g^{(r)}_{q,q'}:=(r^q-r^{q'})^2d\rho^2+((\rho-1)r^q+(2-\rho)r^{q'})^2d\psi^2\,.
$$ 
So $A$ with this metric is isometric to the euclidean annulus with
inner and outer radii $r^{q'}$ and $r^q$. The metric completion
of $(0,1]\times S^1\times A$ with the metric
$$dr^2+r^2d\theta^2+g^{(r)}_{q,q'}$$ compactifies it by adding a single point at
$r=0$. We call the result $A(q,q')$.

(To make the comparison with the local metric of Nagase \cite{nagase}
clearer we note that this metric is bilipschitz equivalent to
$dr^2+r^2d\theta^2+r^{2q}\bigl(ds^2+(r^{q'-q}+s)^2)
d\psi^2\bigr)$
with $s=\rho-1$.)
\end{definition}
\begin{definition}[$B(F,\phi,q)$]\label{def:p1}
  Let $F$ be a compact oriented $2$-manifold, $\phi\colon F\to F$ an
  orientation preserving diffeomorphism, and $F_\phi$ the mapping
  torus of $\phi$, defined as:
$$F_\phi:=([0,2\pi]\times F)/((2\pi,x)\sim(0,\phi(x)))\,.$$
Given a rational number $q>1$ we will define a metric space
$B(F,\phi,q)$, which is topologically the cone on the mapping torus
$F_\phi$.
 
For each $0\le \theta\le 2\pi$ choose a Riemannian metric $g_\theta$
on $F$, varying smoothly with $\theta$, such that for some small
$\delta>0$:
$$
g_\theta=
\begin{cases}
g_0&\text{ for } \theta\in[0,\delta]\,,\\
\phi^*g_{0}&\text{ for }\theta\in[2\pi-\delta,2\pi]\,.
\end{cases}
$$
Then for any $r\in(0,1]$ the metric $r^2d\theta^2+r^{2q}g_\theta$ on
$[0,2\pi]\times F$ induces a smooth metric on $F_\phi$. Thus
$$dr^2+r^2d\theta^2+r^{2q}g_\theta$$ defines a smooth metric on
$(0,1]\times F_\phi$. The metric completion of $(0,1]\times F_\phi$
adds a single point at $r=0$. Denote this completion by
$B(F,\phi,q)$. (It can be thought of as a ``globalization'' of the local metric
of Hsiang and Pati \cite{hsiang-pati}). 

Note that changing $\phi$ by an isotopy or changing the initial choice
of the family of metrics on $F$ does not change the bilipschitz class
of $B(F,\phi,q)$. It will be convenient to make some
additional choices. For a boundary component $\partial_iF$ of $F$ let
$m_i(F)$ be the smallest $m>0$ for which
$\phi^{m}(\partial_iF)=\partial_iF$. By changing $\phi$ by an isotopy
if necessary and choosing the $g_\theta$ suitably we may assume:
\begin{itemize}
\item $\phi^{m_i(F)}$ is the
  identity on $\partial_iF$ for each $i$;
\item in a neighborhood of $\partial F$ we have $g_\theta=g_0$ for
  all $\theta$;
\item the lengths of the boundary components of $F$ are $2\pi$.
\end{itemize}
Then the metric $r^2d\theta^2+r^{2q}g_\theta$ on the boundary
component of $F_\phi$ corresponding to $\partial_iF$ is the product of
a circle of circumference $2\pi m_i(F) r$ and one of circumference $2\pi
r^q$.
\end{definition}
\begin{definition}[$CM$]\label{def:p3}
 Given a compact smooth $3$-manifold $M$,
  choose a Riemannian metric $g$ on $M$ and consider the metric
  $dr^2+r^2g$ on $(0,1]\times M$. The completion of this adds a point
  at $r=0$, giving a \emph{metric cone on $M$}. The bilipschitz class
  of this metric is independent  of choice of $g$, and we will choose
  it later to give the boundary components of $M$ specific desired shapes.
\end{definition}

A piece bilipschitz equivalent to $A(q,q')$ will also be said to be
``of type $A(q,q')$'', of briefly ``of type $A$'', and similarly for
types $B(F,\phi, q)$ and $CM$. We now describe how to glue together
pieces of these three types to obtain our bilipschitz model for
$(X,0)$. Note that, although we are only interested in metrics up to
bilipschitz equivalence, we must glue pieces by strict isometries of
their cone-boundaries. In order that the cone-boundaries are strictly
isometric, we may need to change the metric in Definition \ref{def:p1}
or \ref{def:p3} by replacing the term $r^2d\theta^2$ by
$m^2r^2d\theta^2$ for some positive integer $m$. This gives the same
metric up to bilipschitz equivalence.

For example, given $F$, $\phi$, $q$ and $q'$, each component $C$ of
the cone-boundary of $B(F,\phi,q)$ is isometric to the left boundary
component of $A(q,q')$ after altering the metrics on $B(F,\phi,q)$ and
$A(q,q')$ as just described, using $m$ equal to the number of
components of $F$ and the number of components of $F\cap C$
respectively.  So we can glue $B(F,\phi,q)$ to $A(q,q')$ along $C$,
giving a manifold with piecewise smooth metric. Similarly, a piece
$B(F,\phi,q')$ can be glued to the right boundary of
$A(q,q')$. Finally, since the left boundary of a piece $A(1,q')$ is
strictly conical, it can be glued to a boundary component of a conical
piece $CM$ (again, after suitably adjusting the metric to be correct
on the appropriate boundary component of $M$).

Consider now the decomposition
\eqref{eq:fine_decomp} of the previous section,
% $$
%   X^{(\epsilon_0)} = \bigcup_{\nu \in V(\Gamma_0)}M_\nu^{(\epsilon_0)} 
% \cup \bigcup_{\sigma \in E(\Gamma_0)}A_{\sigma}^{(\epsilon_0)}\,,
% $$
which is the decomposition of part \eqref{it:1.9.1} of
Theorem \ref{th:classification} but with the gluing tori thickened to
annular pieces.
We consider also the weights $q_\nu$ of part \eqref{it:1.9.3}
of Theorem \ref{th:classification}. 

Recall that, except for adding annular pieces, this decomposition is
obtained from the thick-thin decomposition by JSJ-decomposing the thin
zones.  

\begin{remark}\label{rem:transversality}
  Any JSJ decomposition can be positioned uniquely up to isotopy
to be transverse to the foliation by fibers of a fibration over $S^1$
(as long as the leaves are not tori).  Indeed, by the
  Transversality Lemma of Roussarie and Thurston
  (\cite{roussarie,thurston}), we can modify the JSJ decomposition by
  an isotopy in such a way that each separating torus and boundary
  torus is transversal to all leaves of the foliation. Then, using
  Waldhausen (\cite[Satz 2.8]{waldhausen}), we can assume that, up to
  bilipschitz equivalence, the leaves are transversal to the
  Seifert fibers in each Seifert fibered component. 
\end{remark}

In particular, this applies to the foliation of part
\eqref{it:1.9.2} of Theorem \ref{th:classification}, so we have a
foliation with compact leaves on each non-thick piece of this
decomposition. For a piece $A_{\sigma}^{(\epsilon_0)}$ the leaves are
annuli and we define
$$\widehat A_\sigma:=A(q_\nu,q_{\nu'})\,,$$
where $\nu$ and $\nu'$  are  nodes at
the ends of the string $\sigma$, ordered so that $q_\nu<q_{\nu'}$.
For a non-thick
piece $M_\nu^{(\epsilon)}$ the leaves are fibers of a fibration
$M_\nu^{(\epsilon)} \to S^1$.  In this case let $\phi_\nu\colon
F_\nu\to F_\nu$  be the monodromy map of this fibration and define
$$\widehat M_\nu := B(F_\nu,
\phi_\nu, q_\nu)\,.$$ Finally, for each node $\nu$ of $\Gamma$ such
that $q_\nu=1$ (i.e., $M_\nu^{(\epsilon)}$ is thick) we 
% the topology of
% the Seifert $3$-manifold $M_\nu^{(\epsilon)}$ is determined by the
% node $\nu$ and adjacent bamboos. We 
define $$\widehat M_\nu := CM_\nu^{(\epsilon)}\,.$$

We can glue together the pieces $\widehat M_\nu$ and $\widehat A_\sigma$
according to the topology of the decomposition
$X^{(\epsilon)}=\bigcup_{\nu} M_\nu^{(\epsilon)} \cup \bigcup _{\sigma
} A_\sigma^{(\epsilon)}$, arranging, as described above, that the
gluing is by isometries of the cone-boundaries. We can also make sure
that the foliations match to give the foliations of the thin zones
(item \eqref{it:1.9.2} of Theorem \ref{th:classification}).  We obtain
a semi-algebraic set
$$(\widehat{X},0) = \bigcup_{\nu }  (\widehat M_\nu,0)  
\cup \bigcup_{\sigma } ( \widehat A_\sigma,0)\,.$$

The next two sections prove that  $(\widehat{X},0)$ is bilipschitz equivalent to  $(X,0)$ while determining the $q_\nu$'s of item
\eqref{it:1.9.3} of Theorem \ref{th:classification} in the
process. The proof of the theorem will then be completed in the
following Section \ref{sec:invariance} by showing that the data of
Theorem \ref{th:classification} are bilipschitz invariants of $(X,0)$.

\section{Carrousel decomposition and lifting} 
\label{sec:carrousel}

In this section, we will define a decomposition of $(X,0)$ into $A$-,
$B$- and $CM$-pieces such that the polar wedges $A_0$ (see Proposition
\ref{prop:polar wedge}) are $B(D^2,id,s)$ pieces. In order to do this,
we define such a decomposition of $\C^2$ with respect to the
discriminant curve $\Delta$ of a generic projection $\ell \colon (X,0)
\to (\C^2,0)$ and we then lift this decomposition by $\ell$.

  We assume as usual that $\ell=(z_1,z_2)$ is a generic linear
  projection. We will work again inside the Milnor balls with corners 
  $B_\epsilon$  defined in Section \ref{sec:balls}. We denote by 
  $(x,y)$ the coordinates in $\C^2$ (so $(x,y)=(z_1,z_2)$).

  We first define the decomposition inside a conical neighborhood of
  each tangent line to $\Delta$.  Let $L_1,\ldots,L_m$ be the tangent
lines to the discriminant curve $\Delta$ of $\ell$. For each
$j=1,\ldots,m$, let $\Delta_j$ the union of branches of $\Delta$ which
are tangent to $L_j$ and assume that $L_j$ is the line $y=\lambda_j
x$. We consider a cone $V_j=\{(x,y):|x|\le \epsilon, |y-\lambda_j
x|\le \eta |x|\}\subset \C^2$ centered at $L_j$ such that $\Delta_j
\cap B_{\epsilon}$ lies inside $V_j$.
  
% \section{The carrousel decomposition of a thin
%  piece}\label{sec:carrousel}

% Let $\Gamma_j$ be a Tjurina component of $\Gamma$ and set
% $W_j=\pi(N(\Gamma_j))$. Notice that $W_j$ is $Z_j$ plus a collar.

%  We assume as usual that $\ell=(z_1,z_2)$ is a generic linear projection.
% %Let $z_1, z_2$ be a generic pair of linear forms on $(X,0)$.  
% We
% consider the restriction $\ell=(z_1,z_2)\colon W_j\to \C^2$ and we
% denote by $\Pi_j\subset W_j$ the polar curve for this projection and
% $\Delta_j=\ell(\Pi_j) \subset \C^2$ the discriminant curve. There is a
% unique $\lambda_j$ such that $f_j:=z_2-\lambda_jz_1$ vanishes to
% greater order on the exceptional curves associated with $\Gamma_j$
% than $z_1$ (see \ref{le:1}). If $(x,y)$ are the coordinates in $\C^2$,
% $\Delta_j$ is tangent to the line $y=\lambda_j x$. We change our
% coordinates for $\ell$, replacing $z_2$ by $f_j$, in order that this
% tangent line is the $x$ axis.

% We will work inside the Milnor ball with corners, $B_\epsilon$,
% defined in Section \ref{sec:balls}.  We can assume that the image of
% $W_j$ is a cone $V_j=\{(x,y):|x|\le \epsilon, |y|\le \eta |x|\}\subset
% \C^2$ and $W_j$ is a branched cover of this cone, branched over
% $\Delta_j$. Note that the genericity assumption implies that the
% branching over $\Delta_j$ is generic, i.e., the inverse image of a
% connected neighborhood $N$ of a point of $\Delta_j$ has one
% component which is a double branched cover of $N$ and all other
% components map homeomorphically to $N$.

Following the ideas of L\^e \cite{Le} (see also
\cite{le-michel-weber}), we first construct a ``carrousel
decomposition'' of each $V_j, j=1,\ldots, m$ with respect to the branches of $\Delta_j$.
It will consist of closures of regions between successively smaller
neighborhoods of the successive %characteristic 
Puiseux approximations of the branches of $\Delta_j$. As such, it is
finer than the one of \cite{Le}, which only
needed the first Puiseux pairs of the branches of $\Delta_j$.

% Our carrousel needs a more refined decomposition than the one
% constructed in \cite{Le}. Firstly, the carrousel in \cite{Le} used
% only the first Puiseux pairs of the branches of $\Delta_j$, while we
% take into account all the Puiseux exponents (even unessential ones) of
% the branches of $\Delta_j$, up to the last essential ones.
% Secondly, we include pieces with ``trivial topology'' in our
% decomposition, namely pieces whose links are either thickened tori or solid
% tori. $\Delta_j$ will intersect only the latter.
  \begin{figure}[ht]
    \centering
\begin{tikzpicture} 
\begin{scope}[scale=0.7]
\draw[](8,0)ellipse (1 and 2.99) ;
\draw(0,0)--(8,3);
\draw(0,0)--(8,-3);
\draw[thick](0,0) .. controls (5,0) and (8,1)..  (10,3) ;
\draw[thick](0,0) .. controls (5,0) and (8,-1).. (10,-2.5) ;
\draw[thick](0,0) .. controls (7,0) and (8.5,-0.7)..  (11,-1.3) ;

\draw[fill](8,1.5)ellipse(1.3pt and 2pt);
\draw[fill](7.6,-1.2)ellipse(1.3pt and 2pt);
\draw[fill](8.4,-0.68)ellipse(1.3pt and 2pt);
    
%\draw[densely dotted, line width=0.6pt](8,3)--(8,1.5);
%\draw[ line width=0.8pt](7.3,-2.1)--(7.6,-1.2);
%\draw[ thin](8.9,-1.3)--(8.4,-0.68);

\draw[thin,>=stealth,->](8,0)--(11,0);
\draw[thin,>=stealth,->](0,0)--(0,3);
\draw[thin,>=stealth,->](0,0)--(1.5,0.65);
    
\draw[thin](4,0)ellipse (0.5 and 1.48) ;

\draw[fill](4,0.27)ellipse(0.5pt and 1pt);
\draw[fill](3.93,-0.25)ellipse(0.5pt and 1pt);
\draw[fill](4.08,-0.11)ellipse(0.5pt and 1pt); 
    
    \node(a)at(11.3,0){$x$};
        \node(b)at(8,3.5){$V_j(x_0)$};
           \node(b)at(2,-1.3){$V_j $};
              \node(c)at(0.5,2){$y$};
                \node(c)at(10.3,3){$\Delta_j$};
                 \node(c)at(10.3,-2.5){$\Delta_j$};
                  \node(c)at(11.3,-1.3){$\Delta_j$};
                  
     \end{scope}             
\end{tikzpicture}
  \caption{The cone $V_j$ and the curve $\Delta_j$}
    \label{fig:6.1(3)}
\end{figure}
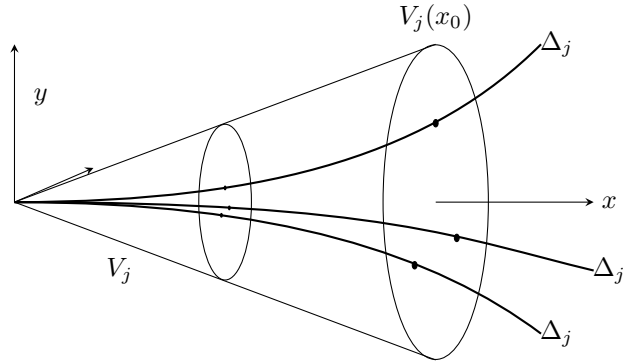

For simplicity we assume first that $\Delta_j$ has just one
branch. The curve $\Delta_j$ admits a Puiseux series expansion of the
form 
$$y=\sum_{i\ge 1} a_ix^{p_i}\in \C\{x^{\frac1N}\}\,.$$ 
Let $p_n=s$  be the contact exponent (see Proposition
  \ref{prop:polar wedge}).

For each  $k\le n$
choose $\alpha_k,\beta_k,\gamma_k>0$ with
$\alpha_k<|a_{k}|-\gamma_k<|a_{k}|+\gamma_k<\beta_k$  and 
consider the region 
$$B_k:=\Bigl\{(x,y): \alpha_k|x^{p_k}|\le |y-\sum_{i=1}^{k-1}a_ix^{p_i}|\le
\beta_k|x^{p_k}|,~ |y-\sum_{i=1}^{k}a_ix^{p_i}|\ge \gamma_k|x^{p_k}|\Bigr\}\,.$$

If the $\epsilon$ of our Milnor ball $B_{\epsilon}$ is small enough the $B_j$'s
will be pairwise disjoint. Denote by $A_1$ the closure of the region between
$\partial V_j$ and $B_1$ and $A_i$ the closure of the region between
$B_{i-1}$ and $B_{i}$ for $i=1,\dots n$. Finally let $D$ be
$$D:=\overline{V_j\setminus(\bigcup_{i=1}^{n}A_i\cup\bigcup_{i=1}^{n}B_i)}\,,$$
which is the union of connected pieces $D_1,\dots,D_n,D_{n+1}$, disjoint
except at $0$, and indexed so that $D_k$ is adjacent to $B_k$ and
$D_{n+1} \setminus \{0\}$ intersects $\Delta$.

Then, for $k=1,\dots,n$, $A_k$ 
is bilipschitz equivalent to $A(p_{k-1},p_{k})$ (Definition
\ref{def:p2}; we put $p_0=1$), and $B_k$ 
is bilipschitz equivalent to  $B(F_k,\phi_k, p_k)$  (Definition
\ref{def:p1}), where $F_k$ is a
planar surface with 
% $$2+\frac{\lcm_{i\le
%     k}\{\operatorname{denom}(p_i)\}}{\lcm_{i<k}\{\operatorname{denom}(p_i)\}}$$
$$2+\frac{\lcm_{i\le
  k}\{\operatorname{denom}(p_i)\}}{\lcm_{i<k}\{\operatorname{denom}(p_i)\}}$$
boundary components and $\phi_k$ is a finite order
diffeomorphism. Finally, each $D_k$ is bilipschitz to $B(D^2,id,
p_k)$.

More generally, if $\Delta_j$ has several components, we first
truncate the Puiseux series for each component of $\Delta_j$ at its
contact exponent. Then for
each pair $\kappa=(f, p_k)$ consisting of a Puiseux polynomial
$f=\sum_{i=1}^{k-1}a_ix^{p_i}$ and an exponent $p_k$ for which
$f=\sum_{i=1}^{k}a_ix^{p_i}$ is a partial sum of the truncated Puiseux
series of some component of $\Delta_j$, we consider all components of
$\Delta_j$ which fit this data. If $a_{k1},\dots,a_{kt}$ are the
coefficients of $x^{p_k}$ which occur in these Puiseux polynomials we
define
\begin{align*}
  B_\kappa:=\Bigl\{(x,y):~&\alpha_\kappa|x^{p_k}|\le |y-\sum_{i=1}^{k-1}a_ix^{p_i}|\le
\beta_\kappa|x^{p_k}|\\ 
&
|y-(\sum_{i=1}^{k-1}a_ix^{p_i}+a_{kj}x^{p_k})|\ge
\gamma_\kappa|x^{p_k}|\text{ for }j=1,\dots,t\Bigr\}\,.
\end{align*}
Here $\alpha_\kappa,\beta_\kappa,\gamma_\kappa$ are chosen so that
$\alpha_\kappa<|a_{kj}|-\gamma_\kappa<|a_{kj}|+\gamma_\kappa<\beta_\kappa$ for each
$j=1,\dots,t$.  Again, if the $\epsilon$ of our Milnor ball is small
enough the sets $ B_\kappa$ will be disjoint for different $\kappa$,
and each is again bilipschitz equivalent to some $B(F,\phi,p_k)$ with
$\phi$ of finite order. The closure of the complement in $V_j$ of the union
of the $B_\kappa$'s is again a union of pieces bilipschitz equivalent to some
$A(q,q')$ or some $B(D^2,id,q)$.

The carrousel picture is the picture of the intersection of this
decomposition of the cone $V_j$ with the plane $x=\epsilon$.  Figure
\ref{fig:carrousel} shows this picture for a discriminant $\Delta_j$
having two branches with Puiseux  series expansions up to their
  contact exponents respectively $y=ax^{4/3}+bx^{13/6}$ and
$y=cx^{7/4}$. Note that the intersection of a piece of the
decomposition of $V_j$ with the disk $V_j\cap \{x=\epsilon\}$ will
usually have several components.
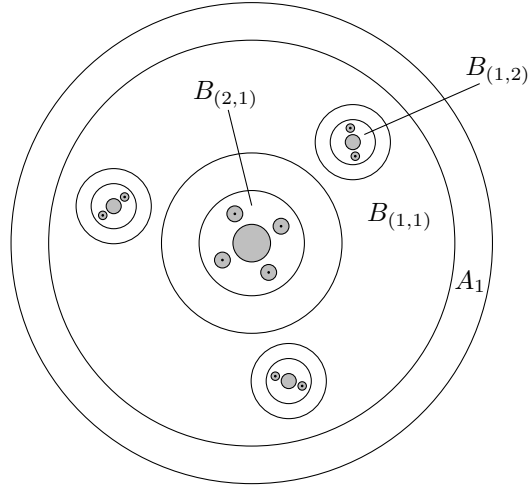
\begin{figure}[ht]
  \centering
\begin{tikzpicture}
\draw[ ] (0,0)circle(3.2cm);

\draw[ ] (0,0)circle(2.7cm);
\draw[ ] (0,0)circle(1.2cm);   

\draw[ ][ fill=lightgray]  (0,0)circle(0.25cm);
\draw[ ] (0,0)circle(0.7cm);
  \draw[ fill=lightgray] (0:0)+(30:0.45)circle(3pt);
    \draw[ fill] (0:0)+(30:0.45)circle(0.3pt);
  \draw[ fill=lightgray] (0:0)+(120:0.45)circle(3pt);
    \draw[ fill] (0:0)+(120:0.45)circle(0.3pt);
  \draw[ fill=lightgray] (0:0)+(210:0.45)circle(3pt);
    \draw[ fill ] (0:0)+(210:0.45)circle(0.3pt);
  \draw[ fill=lightgray] (0:0)+(300:0.45)circle(3pt);
    \draw[ fill] (0:0)+(300:0.45)circle(0.3pt);

  \draw[ ]  (45:1.9)circle(0.5);
  \draw[ ]  (45:1.9)circle(0.3);
  \draw[ ][ fill=lightgray]   (45:1.9)circle(0.1);
    \draw[ fill=lightgray] (45:1.9)+(100:0.19)circle(1.6pt);
      \draw[ fill ] (45:1.9)+(100:0.19)circle(0.3pt);
    \draw[ fill=lightgray] (45:1.9)+(280:0.19)circle(1.6pt);
      \draw[ fill ] (45:1.9)+(280:0.19)circle(0.3pt);

  \draw[ ]  (165:1.9)circle(0.5);
  \draw[ ]  (165:1.9)circle(0.3);
  \draw[ ][ fill=lightgray]   (165:1.9)circle(0.1);
     \draw[ fill=lightgray] (165:1.9)+(40:0.19)circle(1.6pt);
       \draw[ fill] (165:1.9)+(40:0.19)circle(0.3pt);
     \draw[ fill=lightgray] (165:1.9)+(220:0.19)circle(1.6pt);
       \draw[ fill] (165:1.9)+(220:0.19)circle(0.3pt);
          
  \draw[ ]  (285:1.9)circle(0.5);
  \draw[ ]  (285:1.9)circle(0.3);
  \draw[ ][ fill=lightgray]   (285:1.9)circle(0.1);
    \draw[ fill=lightgray] (285:1.9)+(-20:0.19)circle(1.6pt);
      \draw[ fill] (285:1.9)+(-20:0.19)circle(0.3pt);
    \draw[ fill=lightgray] (285:1.9)+(160:0.19)circle(1.6pt);
      \draw[ fill] (285:1.9)+(160:0.19)circle(0.3pt);

\draw[thin](35:3.7) --(44:2.08);
\draw[thin](100:1.8) --(90:0.5);

%\draw[thin](165:1.9)+(230:0.17)--(206:1.9);
%\draw[thin](210:1.8)--(220:0.3);

\node(a)at(-10:2.95){   $A_1$};
\node(a)at(10:2){   $B_{(1,1)}$};
\node(a)at(35:4){   $B_{(1,2)}$};
\node(a)at(100:2){   $B_{(2,1)}$};
%\node(a)at(210:2){   $D$};
  \end{tikzpicture} 
  \caption{Carrousel section for
    $\Delta=\{y=ax^{4/3}+bx^{13/6}+\ldots\}\cup\{y=cx^{7/4} +
    \ldots\}$. The region $D$ is gray.}
  \label{fig:carrousel}
\end{figure}

\smallskip We now take a decomposition as above in each of
  the cones $V_j$ and add the additional piece
  $Y:=(\overline{\C^2\setminus\bigcup V_j}, 0)$ to get a decomposition
  of all of $(\C^2,0)$. This piece $Y$ is conical, i.e., of type $CM$.

We next describe a lifting by $\ell$ of this decomposition of
$(\C^2, 0)$ to $(X,0)$.  

For each $j$ denote by $\Delta_{j1},\dots,\Delta_{jk_j}$  the
components of the discriminant which are in $V_j$ and
$B'_{j1},\dots,B'_{jk_j}$ the corresponding sets  described in the
remark following Proposition \ref{prop:polar wedge}. Let
$A_{j1},\dots ,A_{jk_j}$ be the corresponding polar wedges,
described as components of the inverse images of the $B'_{ji}$ (as in
Proposition \ref{prop:polar wedge}.\eqref{it:LZ2} and the remark),
with contact exponents $s_{j1}, \dots ,s_{jk_j}$.

Notice that each $B'_{ji}$ is a union of carrousel pieces. In
particular, if $k_j=1$, so the part of the discriminant in $V_j$ is
irreducible, then it is a piece of type $B(D^2,id,s_{j0})$.

Set $$(X^L,0):= \bigg(\overline{X\setminus\bigcup_j^t\bigcup_{i=1}^{k_j}A_{ji}},0\bigg),$$  and let $\ell^L\colon (X^L,0)\to (\C^2, 0)$ be
the restriction of $\ell\colon (X,0)\to (\C^2,0)$.
\begin{lemma}\label{lifting}
 \begin{enumerate}
 \item\label{it:12.1.1} Each polar wedge $A_{ji}$ is a piece of
   type $B(D^2, id, s_{ji})$;
\item\label{it:12.1.2} The inverse image by $\ell^L$ of the $CM$ piece
  $Y$ of the carrousel is a union of $CM$ pieces. The inverse image by
  $\ell^L$ of any $A(q,q')$ piece of the carrousel is a disjoint union
  of $A(q,q')$ pieces. The inverse image by $\ell^L$ of any
  $B(F,\phi,q)$ piece is a disjoint union of pieces of type
  $B(F,\phi,q)$ (with the same $q$ but usually different $F$'s and
  $\phi$'s), and the $\phi$'s for these pieces have finite order, so
  their links are Seifert fibered.
\end{enumerate}
\end{lemma}
\begin{proof}Part \eqref{it:12.1.1} is proved in Proposition
  \ref{prop:polar wedge}. In part \eqref{it:12.1.2}
 each inverse image piece is an
  unbranched cover of a piece of the carrousel, with metric
  bilipschitz equivalent to the lifted metric, so this part of the lemma follows.\end{proof}

We have now constructed a decomposition of $(X,0)$ into
``model pieces'' but the decomposition is not minimal. We will next
describe how to
simplify it step by step, leading to a decomposition equivalent to the
the one described in Section \ref{sec:model}.

\section{Bilipschitz equivalence with the model}\label{sec:carrousel1}

We will start with the above non-minimal decomposition of
$(X,0)$ into model pieces and then simplify it until it is
minimal. This will use the following trivial lemma, which gives rules
for simplifying a decomposition into model pieces. The reader can
easily add more rules; we intentionally list only the ones we use. 
 \begin{lemma}\label{le:rules} 
In this lemma $\cong$ means bilipschitz equivalence
and $\cup$ represents gluing along appropriate boundary components
by an isometry. 
\begin{enumerate}
\item\label{rule:D2} $B(D^2,\phi,q)\cong B(D^2,id,q)$; $B(S^1\times
  I,\phi,q)\cong B(S^1\times I,id,q)$.
\item\label{rule:AA} $A(q,q')\cup A(q',q'')\cong A(q,q'')$.
\item\label{rule:FF} If $F$ is the result of gluing a surface
  $F'$ to a disk $D^2$ along boundary components then
  $B(F',\phi|_{F'},q)\cup B(D^2,\phi|_{D^2},q)\cong B(F,\phi,q)$.
\item\label{rule:AD} $A(q,q')\cup B(D^2,id,q')\cong B(D^2,id,q)$.
\item\label{rule:CM}Each $B(D^2,id,1)$, $B(S^1\times I, id,
    1)$ or $B(F, \phi, 1)$ piece is a $CM$ piece and a union of $CM$
    pieces glued along boundary components is a $CM$ piece.\qed
\end{enumerate}
\end{lemma}

We will apply these rules repeatedly to amalgamate pieces in
$(X,0)$. But first we will introduce a restriction on the use of rule
\ref{rule:AA} which will avoid the amalgamation of some specific
annular pieces.
  
  Notice that in rule \ref{rule:AA} (as well as in rule
  \ref{rule:AD}), there is no restriction on $q,q'$ and $q''$. In
  particular, $q=q'$ or $q'=q''$ {\it i.e.,} amalgamation of pieces
  $A(q',q')$ pieces, is a priori allowed. In the carrousel
  decomposition of $(\C^2,0)$ annular pieces of type $A(q,q')$ only
  occur with $q<q'$, but in $(X,0)$ a $B(S^1\times I, id , q')$ piece
  (which can be thought of as a piece of type $A(q',q')$) can arise
  when amalgamating lifted pieces. The following lemma clarifies how
  such $A(q',q')$ pieces can arise in the decomposition of $(X,0)$.
 \begin{lemma}\label{le:13.2}
   If an annular piece of type $A(q', q')$ arises in $(X,0)$ it
   is either:
   \begin{enumerate}
   \item an unbranched cover of an annular region of the carrousel
   (arising as a union
   of a $B$-piece along with the pieces which ``fill the holes'' of
   this $B$-piece), or 
\item\label{it:sa}  a branched cover of a region of type $B(D^2,id,q)$ in the carrousel.
   \end{enumerate}
In case (1)  the two adjacent pieces to the $A(q',q')$ piece are of type $A(q,q')$ and
$A(q',q'')$ with $q<q'<q''$, while in case (2) the two adjacent pieces
are of type $A(q,q')$ with $q<q'$.
 \end{lemma}
 \begin{proof} For the piece to be annular its intersection with a
   Milnor fiber (i.e., intersection with $\{z_1=x_0\}$) must be a
   union of annuli $S^1\times I$. Euler characteristic shows that an
   annulus can branched cover only an annulus (with no branching) or a
   disk. The two possibilities give the two cases in the lemma. The
   statement about adjacent pieces is seen by considering the adjacent
   pieces to the image of the $A(q',q')$ piece in $(\C^2,0)$.
 \end{proof}

 A piece as in item \eqref{it:sa} of Lemma \ref{le:13.2} is called a
 \emph{special annular piece}.  We will now apply the
 rules of Lemma $\ref{le:rules}$ to amalgamate pieces of the
 decomposition, with the following additional rule:
\begin{enumerate}
\item[(6)] A special annular piece is never amalgamated with another piece;
\end{enumerate}

The amalgamation process stops when there is no
$B(D^2,id,q)$ piece left, no piece of type $A(q,q)$ which is not
special annular, no pairs of adjacent annular pieces which are not
special annular, and no pairs of adjacent $CM$ pieces.
At each step of the process there is at least one piece of type
$A(q,q')$ between any two pieces of type
$CM$, $B(F,\phi,q)$ with $F\ne D^2$, or special annular. So at the end
of the process we just have pieces of these three types with $A(q,q')$
pieces separating them.

We call this decomposition
  the \emph{classifying decomposition of $(X,0)$} and 
we create an incident graph $\Gamma_0$ for it with a vertex
for each connected piece which is of type $CM$, $B$ or special annular, and with
an edge for each annular piece connecting two such pieces.  

 \begin{lemma}\label{le:well reduced}
   $\Gamma_0$ is  the graph $\Gamma_0$ of
   Theorem \ref{th:classification} and Section \ref{sec:decomp}.
 \end{lemma}
\begin{proof}Note first that rule \eqref{rule:CM} of
    Lemma \ref{le:rules} absorbs into the initial $CM$ pieces
    (components $\ell^{-1}(Y)$, where
    $Y:=(\overline{\C^2\setminus\bigcup V_j}, 0)$) any adjacent
    $D$-type piece. These are any component of an $\ell^{-1}(V_i)$ which
    maps by a covering map to $V_i$, any piece coming from a moving
    polar (see proof of part \eqref{lem1} of Lemma \ref{le:2}) and any
    piece coming from a bamboo on an \L-node. In particular, part
    \eqref{lem3} of Lemma \ref{le:2} is proved, and we also see that
    the $CM$ pieces of the classifying decomposition are exactly the
    ``conical cores'' of the thick pieces of the thick-thin
    decomposition (i.e., when amalgamated with the adjacent annular
    pieces of type $A(1,q)$, they are the thick pieces).

     After removing the thick pieces we are left with the thin pieces,
     and  for each thin piece $Z_j$ and
     its link $Z_j^{(\epsilon)}$ we need to show the following:
   \begin{enumerate}
   \item[(i)]If one ignores the annular and special annular pieces the
     decomposition of  $Z_j^{(\epsilon)}$ is its minimal decomposition
     into Seifert fibered components;
   \item [(ii)]there is a special annular piece between two Seifert
     fibered components if and only if the strict transform of the polar
     meets the corresponding string of rational curves in the
     resolution graph.
   \end{enumerate}
   For (i), note that the JSJ decomposition of a thin zone is
   characterized by the fact that the pieces are not annular and the
   Seifert fibrations of adjacent pieces do not match up to isotopy on
   the common torus. 

   For pieces which do not have a special annular piece between them
   this is clear: The Seifert fibrations run parallel to the curves of
   Puiseux approximations to the branches of the $\Delta_j$ and are
   determined therefore by the weights $q$; two adjacent pieces with
   no special annular piece between them are separated by an $A(q,q')$
   and, since $q\ne q'$, the fibrations do not match.

   For pieces with a special annular piece between them it is also not
   hard to see. In this case there is an annular region between them
   which is composed of pieces $A(q_1,q')$ and $A(q_2,q')$ glued to a
   $B(S^1\times I, id,q')$ between them. We claim the mismatch of
   Seifert fibrations given by $A(q_1,q')$ and $A(q_2,q')$ accumulate
   rather than cancel.  A geometric approach is to consider an arc in
   the Milnor fiber $W_j\cap\{z_1=\epsilon e^{i\theta}\}$ from an
   intersection of a Seifert fiber of the one piece to an intersection
   of the Seifert fiber of the other, and watch what happens to this
   arc as one swings the Milnor fiber through increasing $\theta$,
   keeping the arc on the Milnor fiber and its ends on their
   respective Seifert fibers. If the Seifert fibrations were parallel
   the arc would have constant length. It passes through the special
   annulus, and we can assume it does this efficiently, so that its
   image in the image disk $V_j\cap\{z_1=x_0\}$ enters near a branch
   point, circles the branch point, and exits. The branch points in
   this disk are rotating faster than the endpoints of the arc since
   $q'>q_1$ and $q'>q_2$, so the curve must stretch (see Figure
   \ref{fig:A}).
   
   \begin{figure}[ht]
  \centering
\begin{tikzpicture}

   \begin{scope}[scale=0.7]
  \draw[ ] (-3,0)circle(1.6cm);
  \draw[ ] (-3,0)circle(1.8cm);
    \draw[ ] (-3,0)circle(0.6cm);
     \draw[ line width=0.4pt]  (-3.6,0)--(-3.3,0) ;
      \draw[ line width=0.4pt]  (180:3)+(0:0.6)--+(0:0.3) ;
     \draw[fill=lightgray] (180:3)+(180:0.25)circle(2pt);
       \draw[fill=lightgray] (180:3)+(0:0.25)circle(2pt);
    \draw[ line width=0.7pt]  (-4.6,0.1)--(-3.6,0.1) ;
\draw[line width=0.7pt ]  (-3.6,0.1) .. controls  (-3.4,0.1) and (-3.3,0.13)  .. (-3.25,0.13);
\draw[line width=0.7pt ]  (-3.6,-0.1) .. controls  (-3.4,-0.1) and (-3.3,-0.13)  .. (-3.25,-0.13);
\draw[line width=0.7pt ]  (-3.25,0.13) .. controls  (-3,0.13) and (-3,-0.13)  .. (-3.25,-0.13);
 \draw[ line width=0.7pt]  (-4.8,-0.1)--(-3.6,-0.1) ;
 
   \draw[>-stealth,->](-4,-0.1)--(-3.9,-0.1);
     \draw[>-stealth,->](-4,0.1)--(-4.1,0.1);

   \node(a)at(220:3){   $C$};
\node(a)at(205:3){   $C'$};
\node(a)at(-4,-0.4){   $\gamma$};

 \draw[ thin]    (0,1.7) .. controls  (1,2.2) and (6,2.2)  .. (7,1.7);
  \draw[ thin]    (0,-1.7) .. controls  (1,-1.2) and (6,-1.2)  .. (7,-1.7);

\node(a)at(7.9,1.7){   $\ell^{-1}(C)$};
\node(a)at(7.9,-1.7){   $\ell^{-1}(C')$};
\node(a)at(1.8,-1.8){   $\ell^{-1}(\gamma)$};
 \draw[ line width=0.4pt]  (2.5,-1.6)--(3.05,-1) ;

  \draw[ line width=0.4pt]  (3,0.75) .. controls  ( 3.5,0.95) and (4 ,0.95)  .. (4.5,0.75) ;
    \draw[ line width=0.4pt]  (3,-0.25) .. controls  ( 3.5,-0.55) and (4 ,-0.55)  .. (4.5,-0.25) ;
     \draw[ line width=0.4pt]  (3,0.75)--(3,-0.25) ;
       \draw[ line width=0.4pt]  (4.5,0.75)--(4.5,-0.25) ;
      \draw[fill=lightgray] (3,0.25)circle(4pt);
        \draw[fill=lightgray] (4.5,0.25)circle(4pt);

        \draw[ line width=0.4pt]  (1.5,0.6) .. controls  ( 2,0.9) and (2.5 ,0.9)  .. (3,0.75) ;
    \draw[ line width=0.4pt]  (1.5,-0.4) .. controls  ( 2,-0.5) and (2.5 ,-0.5)  .. (3,-0.25) ;
     \draw[ line width=0.4pt]  (1.5,0.6)--(1.5,-0.4) ;
         \draw[fill=lightgray] (1.5,0.1)circle(4pt);
 
 \draw[ line width=0.4pt ]  (1.5,0.6) .. controls  ( 1.3,0.7) and (1 ,0.75)  .. (0.7,0.75) ;
    \draw[ line width=0.4pt ]  (1.5,-0.4) .. controls  ( 1.3,-0.5) and (1 ,-0.6)  .. (0.7,-0.6) ;
     \draw[ line width=0.4pt, dotted]  (0,0.5) .. controls  ( 0.3,0.7) and (0.4 ,0.7)  .. (0.7,0.75) ;
    \draw[ line width=0.4pt, dotted ]  (0,-0.5) .. controls  ( 0.3,-0.7) and (0.4 ,-0.6)  .. (0.7,-0.6) ;

     \draw[ line width=0.4pt ]  (6,0.6) .. controls  ( 6.3,0.7) and (6.5 ,0.7)  .. (6.7,0.7) ;
    \draw[ line width=0.4pt, dotted ]  (6.7,0.7) .. controls  ( 7,0.7) and (7.3 ,0.6)  .. (7.4,0.5) ;
     \draw[ line width=0.4pt]  (6,-0.4) .. controls  ( 6.3,-0.5) and (6.5 ,-0.5)  .. (6.7,-0.5) ;
    \draw[ line width=0.4pt, dotted ]  (6.7,-0.5) .. controls  ( 7,-0.5) and (7.3 ,-0.4)  .. (7.4,-0.4) ;

        \draw[ line width=0.4pt]  (4.5,0.75) .. controls  ( 5,0.9) and (5.5 ,0.9)  .. (6,0.6) ;
    \draw[ line width=0.4pt]  (4.5,-0.25) .. controls  ( 5,-0.5) and (5.5 ,-0.5)  .. (6,-0.4) ;
             \draw[ line width=0.4pt]  (6,0.6)--(6,-0.4) ;
  \draw[fill=lightgray] (6,0.1)circle(4pt);

  \draw[ line width=0.7pt]  (3.1,-1.355)--(3.1,-0.1) ;
     \draw[ line width=0.7pt]  (3.1,-0.1) .. controls  (3.1,0)  and (3.25 ,0)  .. (3.25,0.2) ;
         \draw[ line width=0.7pt]  (3.1,0.55) .. controls  (3.1,0.4)  and (3.25 ,0.4)  .. (3.25,0.2) ;
    \draw[ line width=0.7pt]  (3.1,2.08)--(3.1,0.55) ;

   \draw[>-stealth,->](3.1,-1)--(3.1,-0.8);
     \draw[>-stealth,->](3.1,0.8)--(3.1,1);

     \begin{scope}[  yshift=-4.5cm]   
        \draw[ thin]    (0,1.7) .. controls  (1,2.2) and (6,2.2)  .. (7,1.7);
  \draw[ thin]    (0,-1.7) .. controls  (1,-1.2) and (6,-1.2)  .. (7,-1.7);

\node(a)at(7.9,1.7){   $\ell^{-1}(C)$};
\node(a)at(7.9,-1.7){   $\ell^{-1}(C')$};

  \draw[ line width=0.4pt]  (3,0.75) .. controls  ( 3.5,0.95) and (4 ,0.95)  .. (4.5,0.75) ;
    \draw[ line width=0.4pt]  (3,-0.25) .. controls  ( 3.5,-0.55) and (4 ,-0.55)  .. (4.5,-0.25) ;
     \draw[ line width=0.4pt]  (3,0.75)--(3,-0.25) ;
       \draw[ line width=0.4pt]  (4.5,0.75)--(4.5,-0.25) ;
      \draw[fill=lightgray] (3,0.25)circle(4pt);
        \draw[fill=lightgray] (4.5,0.25)circle(4pt);

         \draw[ line width=0.4pt ]  (1.5,0.6) .. controls  ( 1.3,0.7) and (1 ,0.75)  .. (0.7,0.75) ;
    \draw[ line width=0.4pt ]  (1.5,-0.4) .. controls  ( 1.3,-0.5) and (1 ,-0.6)  .. (0.7,-0.6) ;
     \draw[ line width=0.4pt, dotted]  (0,0.5) .. controls  ( 0.3,0.7) and (0.4 ,0.7)  .. (0.7,0.75) ;
    \draw[ line width=0.4pt, dotted ]  (0,-0.5) .. controls  ( 0.3,-0.7) and (0.4 ,-0.6)  .. (0.7,-0.6) ;

     \draw[ line width=0.4pt ]  (6,0.6) .. controls  ( 6.3,0.7) and (6.5 ,0.7)  .. (6.7,0.7) ;
    \draw[ line width=0.4pt, dotted ]  (6.7,0.7) .. controls  ( 7,0.7) and (7.3 ,0.6)  .. (7.4,0.5) ;
     \draw[ line width=0.4pt]  (6,-0.4) .. controls  ( 6.3,-0.5) and (6.5 ,-0.5)  .. (6.7,-0.5) ;
    \draw[ line width=0.4pt, dotted ]  (6.7,-0.5) .. controls  ( 7,-0.5) and (7.3 ,-0.4)  .. (7.4,-0.4) ;

        \draw[ line width=0.4pt]  (1.5,0.6) .. controls  ( 2,0.9) and (2.5 ,0.9)  .. (3,0.75) ;
    \draw[ line width=0.4pt]  (1.5,-0.4) .. controls  ( 2,-0.5) and (2.5 ,-0.5)  .. (3,-0.25) ;
     \draw[ line width=0.4pt]  (1.5,0.6)--(1.5,-0.4) ;
         \draw[fill=lightgray] (1.5,0.1)circle(4pt);

        \draw[ line width=0.4pt]  (4.5,0.75) .. controls  ( 5,0.9) and (5.5 ,0.9)  .. (6,0.6) ;
    \draw[ line width=0.4pt]  (4.5,-0.25) .. controls  ( 5,-0.5) and (5.5 ,-0.5)  .. (6,-0.4) ;
             \draw[ line width=0.4pt]  (6,0.6)--(6,-0.4) ;
  \draw[fill=lightgray] (6,0.1)circle(4pt);
 
  \draw[ line width=0.7pt]  (1.7,2) .. controls  (1.7,1.8)  and (3.1,1)  .. (3.1,0.8) ;
  \draw[ line width=0.7pt]  (3.1,-0.33)--(3.1,-0.1) ;
     \draw[ line width=0.7pt]  (3.1,-0.1) .. controls  (3.1,0)  and (3.25 ,0)  .. (3.25,0.2) ;
         \draw[ line width=0.7pt]  (3.1,0.55) .. controls  (3.1,0.4)  and (3.25 ,0.4)  .. (3.25,0.2) ;
    \draw[ line width=0.7pt]  (3.1,0.8)--(3.1,0.55) ;
      \draw[ line width=0.7pt]  (3.1,-0.33) .. controls  (3.1,-0.53)  and (4.3,-1.15)  .. (4.3,-1.355) ;

  \draw[>-stealth,->](3.93,-1)--(3.83,-0.92);
     \draw[>-stealth,->](2.3,1.46)--(2.2,1.54);

\end{scope}
        
 \begin{scope}[xshift=-6cm, yshift=-4.5cm]
   \draw[ ] (3,0)circle(1.6cm);
     \draw[ ] (3,0)circle(1.8cm);
       \draw[ ] (3,0)circle(0.6cm);
       
          \draw[ line width=0.4pt]  (0:3)+(90:0.6)--+(90:0.3) ;
      \draw[ line width=0.4pt]  (0:3)+(270:0.6)--+(270:0.3) ;
      
    \draw[fill=lightgray] (0:3)+(90:0.25)circle(2pt);
       \draw[fill=lightgray] (0:3)+(270:0.25)circle(2pt);
    
    \draw[line width=0.7pt ]  ( 2.9,0.6) .. controls  (2.9,0.4) and (2.87,0.3)  .. (2.87,0.25);
\draw[line width=0.7pt ]  ( 3.1,0.6) .. controls  (3.1,0.4) and (3.13,0.3)  .. (3.13,0.25);
\draw[line width=0.7pt ]  (2.87,0.25) .. controls  (2.87,0) and (3.13,0)  .. (3.13,0.25);
 \draw[ line width=0.7pt]   (2.9,0.6) .. controls  (2.6,1.2) and (1.7,-0.1)  .. (1.2,-0.1);
  \draw[ line width=0.7pt]   (3.1,0.6) .. controls  (2.8,1.7) and (1.6,0.1)  .. (1.4,0.1);

 \draw[>-stealth,->](1.89,0.27)--(1.9,0.275);
    \draw[>-stealth,->](2.2,0.77)--(2.19,0.765);
   \node(a)at(-40:3.2){   $C$};
\node(a)at(-24:2.8){   $C'$};
\node(a)at(2.1,0.1){   $\gamma$};
\end{scope}
\end{scope}
  \end{tikzpicture} 
  \caption{Non-matching of Seifert fibers across a special annular piece.}
  \label{fig:A}
\end{figure}
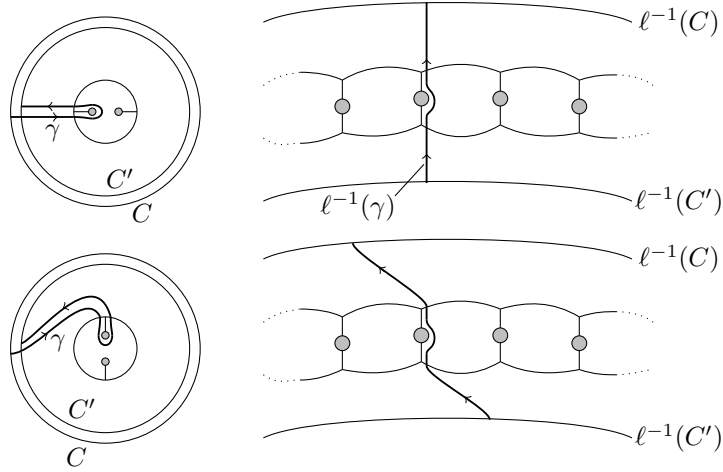

   We have proved the correspondence between the non-annular pieces of
   our decomposition and JSJ-components, and hence with \T-nodes in
   $\Gamma$. The correspondence (ii) can now be seen as follows. A
   string between nodes in $\Gamma$ for which the polar meets an
   exceptional curve corresponds to an annulus of the Nielsen
   decomposition of the Milnor fiber (see Remark \ref{rem:nielsen})
   which meets the polar. This means that the annulus covers
   its image in $V_j\cap\{z=\epsilon\}$ with non-trivial branching. By
   Lemma \ref{le:13.2} this means the image is a disk, so the annulus
   is the Milnor fiber of a special annulus.

This completes the proof that the decomposition graph for the classifying
decomposition is indeed the graph $\Gamma_0$ of Theorem
 \ref{th:classification} and Section \ref{sec:decomp}.  \end{proof}

The vertex-weights $q_\nu$
 are the $q$'s of the model pieces of this constructed
 decomposition. They are the last data we needed to assemble to
 complete the data of Theorem \ref{th:classification}.

 This decomposition of $X\cap B_{\epsilon_0}$ into model pieces 
 matches the construction of the model $\widehat X$ in Section
 \ref{sec:model}, so we have shown:
 \begin{corollary} The bilipschitz model $\widehat X$ constructed
   in Section \ref{sec:model} using the data we have assembled for
   Theorem \ref{th:classification} is bilipschitz
   homeomorphic to $X\cap B_{\epsilon_0}$.
   \qed
 \end{corollary}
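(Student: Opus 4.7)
The plan is to show that the carrousel-based decomposition of $X\cap B_{\epsilon_0}$ assembled in Sections \ref{sec:carrousel} and \ref{sec:carrousel1} is, piece by piece and gluing by gluing, bilipschitz equivalent to the one used to build $\model X$ in Section \ref{sec:model}. First, I would collect what has already been done: on each thickened thin piece $W_j$, the pullback under $\ell$ of the carrousel decomposition of $V$, followed by the reduction procedure using Lemma \ref{le:rules}, yields a decomposition into pieces of types $A(q,q')$, $B(F,\phi,q)$, and special annuli $B(S^1\times I, id, q)$. By Lemma \ref{le:well reduced}, the resulting decomposition graph (after identifying root vertices coming from the same thick zone) is exactly $\Gamma_0$, with the same vertex weights $q_\nu$.

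Next I would handle the thick zones. Each $Y_i = \pi(N(\Gamma_\nu))$ attached to an $\L$-node $\nu$ is metrically conical by Lemma \ref{le:2} (together with Remark \ref{absorb bamboo} which absorbs the bamboos), so up to bilipschitz equivalence it is of the form $CM_\nu^{(\epsilon_0)}$, matching the piece $\model M_\nu$ assigned to $\nu$ in Section \ref{sec:model}. For a $\T$-node $\nu$, the corresponding Seifert fibered piece of the decomposition of some $W_j$ carries a natural foliation by the carrousel fibers, and this foliation agrees (up to isotopy) with the fibers of the Milnor-L\^e fibration $\zeta_j$ restricted to $M_\nu^{(\epsilon_0)}$; its monodromy is $\phi_\nu$, so the piece is bilipschitz equivalent to $B(F_\nu,\phi_\nu,q_\nu) = \model M_\nu$. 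Similarly, each edge $\sigma\in E(\Gamma_0)$ corresponds either to an $A(q_\nu,q_{\nu'})$ piece (carrousel annulus between two $B$-pieces of distinct weights) or to a special annulus $B(S^1\times I, id, q')$ glued between two such $A$-pieces; in either case it is bilipschitz equivalent to $\model A_\sigma$ as defined in Section \ref{sec:model}.

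The remaining step is to check that these piece-wise bilipschitz equivalences can be organized into a global bilipschitz homeomorphism. Here one uses that the bilipschitz class of each model piece is insensitive to the specific choice of Riemannian metric on $F_\nu$ and to the choice of collar scale in $CM_\nu^{(\epsilon_0)}$, as noted in Definitions \ref{def:p1}--\ref{def:p3}. We choose the metrics on the model pieces so that their cone-boundaries are strictly isometric to the carrousel-induced metrics on the boundaries of the pieces of $X\cap B_{\epsilon_0}$ (using the multiplicity trick with $m^2r^2\,d\theta^2$ recalled before Definition \ref{def:p3}). The bilipschitz equivalences of each piece can then be arranged to agree on a collar of each cone-boundary, and assembling them across the finite graph $\Gamma_0$ produces a global bilipschitz homeomorphism $\model X \to X\cap B_{\epsilon_0}$ that respects the foliation data of item \eqref{it:1.9.2} of Theorem \ref{th:classification}.

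The main obstacle I expect is precisely this last bookkeeping: ensuring that the individual piecewise bilipschitz models can be glued by genuine isometries on their cone-boundaries, with compatible foliations on annular interfaces. Once one accepts that metrics on each $B$-piece may be freely modified within its bilipschitz class (without affecting the model up to bilipschitz equivalence), the coherent gluing is a routine but careful matching of circle lengths using the $m$'s associated to boundary components of the Milnor fibers $F_\nu$, exactly as prepared in Definition \ref{def:p1}.
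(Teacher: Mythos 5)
Your proposal is correct and follows the same approach as the paper, which in fact states this corollary with just a \qed, treating it as a direct consequence of the preceding development (the carrousel decomposition of Lemma~\ref{lifting pieces}, the reduction via Lemma~\ref{le:rules}, and the graph identification of Lemma~\ref{le:well reduced}). You have usefully spelled out the implicit final bookkeeping --- that the piecewise bilipschitz equivalences between carrousel pieces and model pieces can be chosen to agree on cone-boundaries (via the freedom in Definitions~\ref{def:p1}--\ref{def:p3} and the $m^2r^2\,d\theta^2$ trick) and assembled across the finite graph $\Gamma_0$ --- which is exactly what the paper relies on but does not restate.
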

This corollary completes the proof of half of Theorem
\ref{th:classification}. It remains to show that the data of that
theorem are bilipschitz invariants.

\begin{remark} Each vertex $\nu$ of $\Gamma_0$ corresponds either to a
  node of $\Gamma$ or, in the case of a special annular piece, to a
  maximal string $\sigma$ of $\Gamma$ which contains a vertex
  corresponding to an exceptional curve which intersect $\Pi^*$ (we
  call the minimal substring of $\sigma$ containing all such vertices
  an \emph{\A-string}, also an \emph{\A-node} if it is a single
  vertex). In discussing examples it is often
  more convenient to associate the weight $q_\nu$ with corresponding
  node or \A-string of $\Gamma$.

  Any rate $q_\nu$ is either $1$ or is a characteristic exponent of a
  branch of the discriminant curve $\Delta$. But in general, not all
  characteristic exponents appear as rates.

  For example, the singularity $(X,0)\subset (\C^3,0)$ with equation
  $x^3+y^4+z^4$ is metrically conical (see \ref{ex:conical}), so the
  only rate $q_\nu$ is $1$ at the single \L-node. But the discriminant
  curve of a generic linear projection consists of four transversal
  cusps each with Puiseux exponent $3/2$. This also
    happens when the contact exponent of a branch of $\Delta$ is
    strictly less than its last characteristic exponent.

  There are also examples where all characteristic exponents
  of $\Delta$ appear as rates $q_\nu$ of \T-nodes or \A-strings. This
  is the case for any $(X,0)$ with equation $z^2-f(x,y)=0$ where $f$
  is irreducible.  Indeed, let $\{(r_k,q_k)\}_{k=1}^n$  be the set of
  Puiseux pairs of $f$ (see \cite[p.~49]{EN}), i.e., the characteristic
  Puiseux exponents of the curve $f=0$ are the rational numbers $p_k =
  \frac{q_k}{r_1 \ldots r_k}$. The projection $\ell =(x,y)$ is generic
  and its discriminant curve is $\{f=0\}$. By Propositions
  4.1 and 4.2 of \cite{MN}, for any $k=1\ldots,n-1$, each connected
  component of $\ell^{-1}(B_k)$ gives rise to a \T-node of $\Gamma$
  with rate $p_k$. Moreover, either $r_n \neq 2$, and then
  $\ell^{-1}(B_n)$ gives rise to a \T-node with rate $p_n$, or $r_n=
  2$, and then $\ell^{-1}(B_n)$ gives rise to a \A-node with rate $p_n$.
\end{remark}

\section{Bilipschitz invariance of 
 the data}\label{sec:invariance}

In this section, we prove that the data \eqref{it:1.9.1},
\eqref{it:1.9.2} and \eqref{it:1.9.3} of Theorem
\ref{th:classification} are determined by the bilipschitz geometry of
$(X,0)$.

We first consider the refined decomposition of $(X,0)$ presented in
Section \ref{sec:decomp}, which is part $\eqref{it:1.9.1}$ of the data
of the Classification Theorem \ref{th:classification}:
$$X^{(\epsilon_0)} = \bigcup_{\nu \in {\mathcal
    N}_\Gamma}M_\nu^{(\epsilon_0)} \cup \bigcup_{\sigma \in {\mathcal
    S}_\Gamma}A_{\sigma}^{(\epsilon_0)}$$
\begin{lemma}\label{le:inv_pieces}
  The bilipschitz geometry of $(X,0)$ determines, up to homeomorphism,
  the above decomposition.
\end{lemma}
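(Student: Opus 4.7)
The plan is to recover the decomposition in three layers, matching the construction of Section~\ref{sec:decomp}. \emph{Thick-thin layer:} by Theorem~\ref{th:uniqueness}, together with the remarks at the end of Section~\ref{sec:uniqueness} that extend invariance to arbitrary (not only semi-algebraic) bilipschitz perturbations, the splitting $(X,0)=\bigcup_i(Y_i,0)\cup\bigcup_j(Z_j,0)$ is recovered up to homeomorphism from the bilipschitz class of $(X,0)$. \emph{JSJ layer:} each thin zone link $Z_j^{(\epsilon_0)}$ is a graph manifold by Theorem~\ref{th:main in intro}\,\eqref{it:thin}, so its minimal decomposition into Seifert fibered pieces glued along tori---the Jaco-Shalen-Johannson decomposition---is a topological invariant of $Z_j^{(\epsilon_0)}$. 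Combined with the first step, the JSJ pieces of each $Z_j^{(\epsilon_0)}$ are therefore determined up to homeomorphism.

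The remaining and most delicate task is to identify bilipschitz invariantly which separating tori carry a special annular piece $M_\nu^{(\epsilon_0)}\cong T^2\times I$ (the A-vertices of $\Gamma_0$). For this I propose to introduce a bilipschitz-invariant ``local rate'' function $q\colon X\setminus\{0\}\to\Q_{\geq 1}$, where $q(p)$ is the supremum of exponents $q\ge 1$ such that there exists a continuous family of essential closed curves $\gamma_\epsilon\subset X^{(\epsilon)}$, $\epsilon\in(0,|p|]$, with $\gamma_{|p|}$ passing through $p$ and $\operatorname{length}(\gamma_\epsilon)=O(\epsilon^q)$. Bilipschitz invariance is immediate. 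One then shows that on the interior of a non-annular piece $M_\nu$ the function $q$ is constant and equal to $q_\nu$, while on a special annular piece modeled by $A(q,q')$ the rate varies continuously and strictly monotonically between $q$ and $q'$. Since $q_\nu\ne q_{\nu'}$ for any two adjacent vertices of $\Gamma_0$, the maximal connected sets on which $q$ is locally constant are exactly the non-annular pieces, and their complements are the special annular pieces; this recovers the decomposition up to homeomorphism.

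The main obstacle is to prove rigorously the asserted behavior of $q$. The upper bound $q(p)\ge q_\nu$ on $M_\nu$ comes from transporting Milnor-fiber boundary loops (Proposition~\ref{prop:fast boundaries}) radially via the vector field of Proposition~\ref{prop:thin1}(3), combined with the Hsiang-Pati form of the local model metric (Definition~\ref{def:p1}). The reverse inequality $q(p)\le q_\nu$ requires excluding faster-shrinking essential loops through $p$: any such loop either remains essential in $M_\nu^{(\epsilon)}$, in which case it must wrap a Seifert fiber of length proportional to $|p|^{q_\nu}$, or it exits $M_\nu$ and incurs the isoperimetric penalty used in the proof of Theorem~\ref{th:fast loops} together with the thickness of the adjacent thick pieces. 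On $A(q,q')$, the strict monotone interpolation of $q$ follows directly by inspection of the Nagase-type metric of Definition~\ref{def:p2}. Once these behaviors of the rate function are established, its level-set structure manifestly recovers the decomposition \eqref{eq:fine_decomp} up to homeomorphism, completing the proof.
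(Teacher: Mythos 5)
Your first two layers—recovering the thick-thin decomposition via Theorem \ref{th:uniqueness} and then applying JSJ uniqueness inside each thin zone link—match what the paper does exactly. The paper then defers the remaining issue (identifying the special annular pieces) to its Lemmas \ref{le:nonparallel} and \ref{le:bilrates}; you tackle it head-on with a ``local rate function,'' which is a genuinely different route and is where the gap lies.

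The local rate function $q(p)$ as you define it does not have the level-set structure you claim. You require only that the top end $\gamma_{|p|}$ pass through $p$, while the bound $\operatorname{length}(\gamma_\epsilon)=O(\epsilon^q)$ is an asymptotic statement as $\epsilon\to 0$ with an unspecified constant: it places no effective constraint at $\epsilon=|p|$ and none on where the curves $\gamma_\epsilon$ for small $\epsilon$ actually sit. Once $\gamma_{|p|}$ is chosen essential through $p$, the rest of the family is free to migrate across the thin zone and settle onto the fastest-shrinking loops available anywhere in $Z_j^{(\epsilon)}$. Consequently $q(p)$ is (up to boundary effects) constant on each whole thin zone, equal to $\max\{q_\nu\}$ over all pieces of $Z_j$, rather than piecewise constant on the $M_\nu$'s, so its level sets do not recover the decomposition. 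The remedy the paper uses is to control the homotopy class tightly: it looks at essential curves $\gamma\subset F_\nu$ that are \emph{not} boundary-parallel, observes that every representative at radius $\epsilon$ must cross $F_\nu^{(\epsilon)}$ with a portion of bounded rescaled length, and concludes that the maximal rate of such loops is exactly $q_\nu$ (Lemma \ref{le:nonparallel}); the rates of boundary-parallel loops then detect the presence and rate of the intervening special annuli (Lemma \ref{le:bilrates}). Without some such topological constraint forcing the fast-loop family to stay ``tethered'' to a given piece, the rate invariant you propose does not localize.
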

\begin{proof}
  The thick-thin decomposition
  $(X,0)=\bigcup_{i=1}^r(Y_i,0)\cup\bigcup_{j=1}^s (Z_j,0)$ is
  determined up to homeomorphism by the bilipschitz geometry of
  $(X,0)$ (Section \ref{sec:uniqueness}).  For each \L-node $\nu$, the
  associated thick piece $Y_\nu$ is defined by
  $Y_\nu=\pi(N(\Gamma_\nu))$ (Section \ref{sec:thick-thin}). Therefore
  the link $M_{\nu}^{(\epsilon_0)}$, which coincides with
  $Y_\nu^{(\epsilon_0)}$ up to a collar neighborhood, is determined
  by the bilipschitz geometry.  For each Tjurina component $\Gamma_j$
  of $\Gamma$, the pieces $M_\nu^{(\epsilon_0)}$ corresponding to \T-nodes in
  $\Gamma_j$ are exactly the Seifert components of the minimal JSJ
  decomposition of $Z_j^{(\epsilon_0)}$. Since the minimal JSJ
    decomposition of a $3$-manifold is unique up to isotopy, the
    pieces $M_\nu$ are determined up to homeomorphism by the
  bilipschitz geometry.

  It remains to prove that the special annular pieces $A_{\sigma}^{(\epsilon_0)}$ are also
  determined by the bilipschitz geometry. This will be done in Lemma
  \ref{le:bilrates} \end{proof}

We next prove the bilipschitz invariance of data \eqref{it:1.9.2} of
Theorem \ref{th:classification}.
\begin{lemma}
  The bilipschitz geometry of $(X,0)$ determines for each thin zone
  $Z_j^{(\epsilon)}$ the foliation of $Z_j^{(\epsilon)}$ by fibers of
  $\zeta_j^{(\epsilon)}$.
\end{lemma}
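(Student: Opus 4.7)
The plan is to characterize the foliation intrinsically in terms of length data, then invoke bilipschitz invariance of length. The foliation of $Z_j^{(\epsilon_0)}$ by fibers of $\zeta_j^{(\epsilon_0)}$ is determined up to isotopy by the primitive class $[\zeta_j^{(\epsilon_0)}]\in H^1(Z_j^{(\epsilon_0)};\Z)$ up to sign, and because it is primitive this class is determined by its kernel
$$K_j \;:=\; \ker\bigl(\zeta_{j*}\colon H_1(Z_j^{(\epsilon_0)};\Z)\to \Z\bigr).$$
I will identify $K_j$ with the set $K_j^{\mathrm{fast}} \subset H_1(Z_j^{(\epsilon_0)};\Z)$ of homology classes carrying a \emph{fast loop}, that is, classes $\alpha$ for which there exists a continuous family $\gamma_\epsilon\colon S^1\to Z_j^{(\epsilon)}$, $0<\epsilon\le\epsilon_0$, with $[\gamma_{\epsilon_0}]=\alpha$ and $\mathrm{length}(\gamma_\epsilon)=o(\epsilon)$. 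Once $K_j=K_j^{\mathrm{fast}}$ is established, bilipschitz invariance is immediate: the bilipschitz identification of thin zones furnished by Lemma \ref{le:inv_pieces} distorts radii and lengths only by bounded multiplicative factors, so the condition $\mathrm{length}(\gamma_\epsilon)=o(\epsilon)$ is preserved after at most a bounded reparametrization of $\epsilon$, and hence fast loop families transport to fast loop families.

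For the inclusion $K_j\subset K_j^{\mathrm{fast}}$, I would use the long exact sequence of the fibration $\zeta_j^{(\epsilon_0)}\colon Z_j^{(\epsilon_0)}\to S^1$ (or the Wang sequence for the mapping torus) to write every class in $K_j$ as the image of a cycle $\gamma$ in a Milnor fiber $F_j$. The vector field of Proposition \ref{prop:thin1}(3) lifts the inward radial field on $\C\setminus\{0\}$ and its flow takes Milnor fibers to Milnor fibers while shrinking them faster than linearly. Transporting the representative cycle $\gamma$ along this flow gives a continuous family $\gamma_\epsilon$ whose diameters, and therefore lengths, are $O(\epsilon^{q_j}) = o(\epsilon)$ for the $q_j>1$ of that proposition, so $\alpha \in K_j^{\mathrm{fast}}$.

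For the reverse inclusion $K_j^{\mathrm{fast}}\subset K_j$, the decisive observation is that $\zeta_j = z_1|_{Z_j}$ is $1$-Lipschitz for the inner metric on $X$: indeed $z_1\colon\C^n\to\C$ is a coordinate projection, hence $1$-Lipschitz for the ambient Euclidean metric, and the inner metric on $X$ dominates the restriction of that metric. Thus if $\mathrm{length}(\gamma_\epsilon)=o(\epsilon)$, then $\zeta_j(\gamma_\epsilon)\subset S^1_\epsilon$ has length $o(\epsilon)$ in a circle of circumference $2\pi\epsilon$, so its winding number
$$|\zeta_{j*}(\alpha)| \;\le\; \frac{\mathrm{length}(\gamma_\epsilon)}{2\pi\epsilon} \;\longrightarrow\; 0$$
is an integer tending to $0$, hence equal to $0$ for all sufficiently small $\epsilon$. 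Therefore $\alpha\in K_j$.

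The one small technical point to watch is the compatibility of the identifications of $Z_j^{(\epsilon)}$ for varying $\epsilon$ used in the definition of fast loop: the forward direction uses the flow of Proposition \ref{prop:thin1}(3), while under a bilipschitz self-map one must verify that a fast family in $X$ pushes forward, after mild reparametrization in $\epsilon$, to a fast family in $X'$. This is where the horn-neighborhood characterization of $Z_j$ from Proposition \ref{prop:thin1}(1) and the fact that bilipschitz maps preserve radii up to bounded factors come in; I expect this to be the main (but routine) obstacle.
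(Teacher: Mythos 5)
Your argument is correct and tracks the paper's closely: both hinge on the fact that $\zeta_j=z_1|_{Z_j}$ is $1$-Lipschitz, so a continuous family of loops whose $\zeta_j$-images have nonzero winding number in $S^1_\epsilon$ cannot have lengths $o(\epsilon)$. You reformulate this via the kernel $K_j=\ker\zeta_{j*}$ and fast-loop classes, whereas the paper argues directly that fibers of any alternative fibration map essentially to $S^1$ and hence cannot have super-linearly shrinking diameter --- the same dichotomy, packaged differently. Two small slips to fix. First, the clause ``diameters, and therefore lengths'' has the implication backwards: a small diameter does not bound length. The correct source of the length estimate on the flowed loops $\gamma_\epsilon$ is the super-linear contraction property of the vector field of Proposition \ref{prop:thin1}(3), exactly as used in the proof of Theorem \ref{th:fast loops}. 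Second, a cycle in $F_j$ is in general a sum of loops rather than a single loop, so you should either represent each class of $K_j$ by a single embedded loop in $F_j$ (always possible in a surface) or define $K_j^{\mathrm{fast}}$ as the subgroup generated by fast-loop classes; either fix is immediate.
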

\begin{proof}
  We begin with a remark: If $ M \to S^1$ is a fibration of a compact
  connected oriented $3$-manifold, we consider $M$ as a foliated
  manifold, with compact leaves which are the connected components of
  the fibers. If the fibration has disconnected fibers, then it is the
  composition with a covering map $S^1\to S^1$ of a fibration with
  connected fiber $F$. This fibration gives the same foliation (and it
  is determined up to isotopy by $F\subset M$). We recall also that
  the isotopy class of a fibration $\zeta\colon M\to S^1$ of a
  $3$-manifold is determined up to isotopy by the homotopy class
  $[\phi]\in[M,S^1]=H^1(M;\Z)$ and has connected fibers if and only if
  this class is primitive. (See e.g., \cite{stallings}.)

  According to Proposition \ref{prop:thin1}, the fibration
  $\zeta_j^{(\epsilon)}\colon Z_j^{(\epsilon)}\to S^1_{\epsilon}$
  varies continuously with $\epsilon$, and the diameter of the fibers
  shrinks faster than linearly with $\epsilon$.  As just described, we
  can modify $\zeta^{(\epsilon)}_j$ if necessary to have connected
  fiber $F_j$ without changing the foliation by fibers. Any fibration
  $ \xi \colon Z_j^{(\epsilon)}\to S^1$ generating a different
  foliation up to isotopy has fibers which map essentially (i.e.,
    not null-homotopically) to $S^1$ by $\zeta_j^{(\epsilon)}$, so
  their diameter cannot shrink super-linearly with $\epsilon$. Hence
  our foliation is determined up to isotopy by the bilipschitz
  geometry.
\end{proof}

Recall that a closed curve (or ``loop'') in $X\setminus\{0\}$ is
essential if it is not null-homotopic in $X\setminus\{0\}$. A
continuous family of essential loops $\{\gamma_\epsilon\colon S^1\to
X^{(\epsilon)}\}_{0<\epsilon\le \epsilon_0}$ is a \emph{fast loop} (of
the first kind) if the lengths shrink faster than linearly in
$\epsilon$.  If $\operatorname{length}(\gamma_\epsilon)=O(\epsilon^q)$
we call $q$ the rate of the fast loop.

\begin{lemma}\label{le:nonparallel}
  For a Tjurina component $\Gamma_j$ of $\Gamma$ and a \T-node $\nu$
  in $\Gamma_j$, denote by $F_{\nu}^{(\epsilon)}$ a fiber of the
  fibration $\zeta_\nu^{(\epsilon)}\colon M_\nu^{(\epsilon)}\to S^1$
  obtained by restricting $\zeta_j^{(\epsilon)}$ to
  $M_\nu^{(\epsilon)}$.  Then $F_{\nu}^{(\epsilon_0)}$ contains an
  essential loop which is not homotopic into $\partial
  F_{\nu}^{(\epsilon_0)}$. Any such loop gives a fast loop of maximal
  rate $q_\nu$, so $q_\nu$ is a bilipschitz invariant.
\end{lemma}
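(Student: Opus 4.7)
The plan is to establish both the existence of a fast loop of rate $q_\nu$ carried by $F_\nu$ and the upper bound $q_\nu$ on all such rates by working with the bilipschitz model $\model M_\nu = B(F_\nu,\phi_\nu,q_\nu)$ from Section \ref{sec:model}, whose bilipschitz equivalence with $M_\nu^{(\epsilon_0)}$ was established in Section \ref{sec:carrousel1}. Recall that this model carries the metric $dr^2 + r^2 d\theta^2 + r^{2q_\nu} g_\theta$, so the fiber $F_\nu$ at level $r$ is metrically the surface $(F_\nu, r^{2q_\nu} g_0)$.

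For existence, first note that since $\nu$ is a \T-node surviving the reduction procedure of Section \ref{sec:carrousel1}, $F_\nu$ is neither a disk (otherwise the piece would have been absorbed by moves \eqref{rule:D2} and \eqref{rule:AD} of Lemma \ref{le:rules}) nor an annulus (otherwise $\nu$ would correspond to a special annular piece, i.e., an edge of $\Gamma_0$ rather than a vertex). Hence $F_\nu$ contains an essential simple closed curve $\gamma$ not parallel to $\partial F_\nu$. Since $F_\nu$ is a fiber of the fibration $\zeta_\nu^{(\epsilon_0)}$, it is incompressible in $M_\nu^{(\epsilon_0)}$, so $\gamma$ is essential there. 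Using the vector field of Proposition \ref{prop:thin1}(3), the loop $\gamma$ extends to a family $\{\gamma_\epsilon\}$ lying on the Milnor fiber at level $\epsilon$; by the form of the model metric, $\operatorname{length}(\gamma_\epsilon)\le C\epsilon^{q_\nu}$, so the family is a fast loop of rate at least $q_\nu$. That $\gamma$ remains homotopically nontrivial in $\pi_1(X^{(\epsilon_0)})$ follows from $\pi_1$-injectivity of JSJ pieces when $\pi_1(X^{(\epsilon_0)})$ is infinite, and from a case-by-case verification analogous to the end of the proof of Proposition \ref{prop:fast boundaries} otherwise.

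For the upper bound on rates, the key assertion is that any loop in $\model M_\nu$ homotopic in $M_\nu^{(\epsilon_0)}$ to an essential non-boundary-parallel curve on $F_\nu$ has length at least $c\epsilon^{q_\nu}$ at level $\epsilon$. The main obstacle is that a competing representative need not lie on a single Milnor fiber and may run in the $\theta$-direction where distances scale only like $r$ rather than $r^{q_\nu}$. To handle this, I would pass to the finite cyclic cover trivializing the monodromy $\phi_\nu$, which has finite order because $\nu$ is a \T-node; in this cover $\model M_\nu$ becomes isometric to a warped cone $(0,1]\times S^1\times F_\nu$ with metric $dr^2 + r^2 d\theta^2 + r^{2q_\nu} g_0$. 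Projection to the $F_\nu$-factor is then $1$-Lipschitz, and the projected lift of the loop represents a nonzero, non-boundary-parallel class in $\pi_1(F_\nu)$, which has intrinsic geodesic length at least $\ell_0\, r^{q_\nu}$ for some $\ell_0>0$ (the geodesic length in $g_0$). This yields the required lower bound and hence rate at most $q_\nu$.

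Finally, the rate of a fast loop is manifestly a bilipschitz invariant, since a $K$-bilipschitz self-homeomorphism scales lengths by at most $K$ and therefore preserves the supremum of $q$ for which $\operatorname{length}(\gamma_\epsilon)=O(\epsilon^q)$. Combined with Lemma \ref{le:inv_pieces} and the preceding lemma, which respectively determine $M_\nu^{(\epsilon_0)}$ up to homeomorphism and the isotopy class of the fiber $F_\nu$ from the bilipschitz geometry, this identifies $q_\nu$ as the intrinsic bilipschitz invariant obtained as the maximum rate of a fast loop carried by $F_\nu$ and not parallel to $\partial F_\nu$.
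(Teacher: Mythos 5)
Your approach is recognizably the same strategy as the paper's --- find a non-boundary-parallel essential curve in $F_\nu$ and use the $B(F_\nu,\phi_\nu,q_\nu)$ model to read off the rate from both sides --- but there is a genuine gap in the existence step. You assert that an essential $\gamma\subset F_\nu$ not parallel to $\partial F_\nu$ remains essential in $X^{(\epsilon_0)}$, appealing to ``$\pi_1$-injectivity of JSJ pieces when $\pi_1(X^{(\epsilon_0)})$ is infinite'' and deferring the rest to ``a case-by-case verification.''  But the pieces $M_\nu^{(\epsilon_0)}$ of the refined decomposition of $X^{(\epsilon_0)}$ need not be $\pi_1$-injective in $X^{(\epsilon_0)}$ even when $\pi_1(X^{(\epsilon_0)})$ is infinite: as the proof of Proposition \ref{prop:fast boundaries} already makes explicit (via Lemmas \ref{le:gamma is positive multiple of core} and \ref{le:meridian curves are nontrivial in pi_1}), a separating torus can be compressible in $X^{(\epsilon_0)}$, bounding a solid torus containing an \L-node, and then the thin side of the torus does not inject on $\pi_1$ (the meridian of the solid torus is killed, and a curve in $M_\nu^{(\epsilon_0)}$ that lies in the normal closure of that meridian dies in $\pi_1(X^{(\epsilon_0)})$). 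The paper dodges the entire issue with a short device you should adopt: since Proposition \ref{prop:fast boundaries} already shows each boundary component $\delta$ of $F_\nu$ gives a fast loop (hence $\delta$ is essential), choose $\gamma$ so that both $\gamma$ and $\gamma\delta$ (after joining to a basepoint) are non-boundary-parallel in $F_\nu$; if $\gamma$ dies in $X^{(\epsilon_0)}$ then $\gamma\delta\simeq\delta$ is essential, so one of the two always works. This is uniform, avoids any reproof, and makes the ``case-by-case verification'' unnecessary.

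Your upper-bound argument via the finite cyclic cover trivializing $\phi_\nu$ (so the metric splits as $dr^2+r^2d\theta^2+r^{2q_\nu}g_0$ and the $F_\nu$-projection is $1$-Lipschitz, with the projected lift still non-boundary-parallel and hence of $g_0$-geodesic length bounded below) is actually a cleaner and more explicit formulation than the paper's terse sentence (``every representative of $\gamma$ intersects $F_\nu$, and there is a lower bound on the length of the portion\dots''), and is a nice contribution. One caveat you share with the paper: as written, the argument only controls representatives that stay in $M_\nu^{(\epsilon_0)}$; to rule out a shorter representative routed through an adjacent piece one should remark that a non-boundary-parallel simple closed curve on $F_\nu$ is not conjugate to a peripheral element of $\pi_1(M_\nu)$ (the fiber group is normal in $\pi_1(M_\nu)$ and $\delta^a$ peripheral with $a\ne\pm1$ is impossible for primitive $\gamma$), so it cannot be pushed off $M_\nu^{(\epsilon_0)}$ by a homotopy. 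That point is minor, but worth a sentence in either proof.
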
 
\begin{proof}
Note that a fast loop can only exist in a
    thin zone, and then an argument similar to the proof of the lemma
    above shows that there is no loss of generality in considering
    only fast loops which are homotopic into  Milnor fibers of the thin zone.
    Indeed, a loop which is not homotopic into a Milnor fiber must map
    essentially to the circle under the fibration $\zeta_j$ and hence
    cannot shrink faster than linearly. 
    
  Let $\gamma$ be a loop in $F_{\nu}^{(\epsilon_0)}$ which is
  neither null-homotopic in $F_{\nu}^{(\epsilon_0)}$ nor homotopic
  into a boundary component of $F_{\nu}^{(\epsilon_0)}$
  (since $\pi_1(F_\nu^{(\epsilon_0)})$ is a nonabelian free group, most
  loops have this property). Let $\delta$ be a boundary component of
  $F_{\nu}^{(\epsilon_0)}$. We can choose $\gamma$ so that, after
  connecting these loops to a base-point, neither of the loops $\gamma$ or
  $\gamma\delta$ is boundary-parallel. If both were  inessential
  then $\delta\sim \gamma^{-1}\gamma\delta$ would be
  inessential, but Proposition \ref{prop:fast boundaries} says each
  boundary component of $F_{\nu}^{(\epsilon_0)}$ is essential. Thus
  at least one of them is the desired essential loop in $F_\nu^{(\epsilon_0)}$.

  Since $M_\nu$ is of type $B(F_\nu,\phi,q_\nu)$, it is clear that
  every essential loop in $F_\nu^{(\epsilon_0)}$
  yields a fast loop of rate at least $q_\nu$.
  Now let $\gamma$ be an essential loop in $M_\nu^{(\epsilon_0)}$
  which is homotopic into  $F_{\nu}^{(\epsilon_0)}$ but not homotopic into a boundary component of $F_{\nu}^{(\epsilon_0)}$. Every
  representative of $\gamma$ intersects $M_{\nu}^{(\epsilon_0)}$, and
  there is a lower bound on the length of the portion of such
a  representative which lies in $M_{\nu}^{(\epsilon_0)}$, which is
realized within a  fiber   $F_{\nu}^{(\epsilon_0)}$. Since
  $F_{\nu}^{(\epsilon_0)}$ shrinks uniformly at rate $q_\nu$, it follows
  that $q_{\nu}$ is an upper bound on the rate of fast loops with
  $\gamma_{\epsilon_0}=\gamma$.
\end{proof}

\begin{lemma} \label{le:bilrates} The bilipschitz geometry of $(X,0)$
  determines the special annular pieces $M_\nu$ and the maximal rate
  $q_\nu$ of each such piece $M_\nu$.
\end{lemma}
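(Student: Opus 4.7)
\textbf{Proof plan for Lemma \ref{le:bilrates}.}

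The plan is to exploit the fact that a special annular piece $M_\nu$ corresponds to a vertex of $\Gamma_0$ whose rate $q_\nu$ strictly exceeds the rates of both its neighbors. Thus, along the transition across a JSJ gluing torus between two Seifert pieces $M_\mu$ and $M_{\mu'}$, the rates either transition monotonically (no special annular piece) or go up to a local maximum $q_\nu$ and back down (special annular piece). The bilipschitz geometry should detect this ``bump'' by finding boundary-parallel fast loops whose rate exceeds $\max(q_\mu,q_{\mu'})$.

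First, I would note that by Lemma \ref{le:inv_pieces}, the non-annular pieces $M_\nu$ of the refined decomposition are determined up to homeomorphism by the bilipschitz geometry, and by Lemma \ref{le:nonparallel}, the rate $q_\nu$ at each T-node (and trivially $q_\nu=1$ at each L-node) is bilipschitz invariant. Thus what remains is to locate each special annular vertex $\nu\in V(\Gamma_0)$, which sits on an edge of the JSJ decomposition graph between two non-annular pieces, and to determine its rate.

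Next, I would fix such a JSJ torus $T\subset Z_j^{(\epsilon_0)}$ separating two non-annular Seifert pieces $M_\mu,M_{\mu'}$, and consider a closed curve $\gamma\subset T$ which is boundary-parallel in both $F_\mu$ and $F_{\mu'}$ (equivalently, the common boundary circle of the two Milnor-fiber pieces). Using the vector field of Proposition \ref{prop:thin1}, any such $\gamma$ extends to a family $\{\gamma_\epsilon\}_{\epsilon\le\epsilon_0}$ of essential loops in $X^{(\epsilon)}$; by Proposition \ref{prop:fast boundaries}, this family is a fast loop of the first kind. Define $q_T$ to be the supremum of the rates of all such boundary-parallel fast loop families representing $\gamma$. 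I then analyze $q_T$ using the bilipschitz model of Section \ref{sec:model}:
\begin{itemize}
\item If $\mu$ and $\mu'$ are adjacent in $\Gamma_0$, then the region near $T$ is modeled by a single piece $A(q_\mu,q_{\mu'})$, in which the shortest representative of $\gamma$ at scale $r$ has length on the order of $r^{\max(q_\mu,q_{\mu'})}$, so $q_T=\max(q_\mu,q_{\mu'})$.
\item If there is a special annular piece $M_\nu$ between $\mu$ and $\mu'$, the region is modeled by $A(q_\mu,q_\nu)\cup B(S^1\times I,\mathrm{id},q_\nu)\cup A(q_{\mu'},q_\nu)$ with $q_\nu>\max(q_\mu,q_{\mu'})$; pushing $\gamma$ into the core of $B(S^1\times I,\mathrm{id},q_\nu)$ achieves length on the order of $r^{q_\nu}$, and by the same argument as in Lemma \ref{le:nonparallel} applied to the annular fiber $F_\nu=S^1\times I$, no faster rate is possible. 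Hence $q_T=q_\nu$.
\end{itemize}

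Thus a JSJ torus $T$ carries a special annular piece of the refined decomposition if and only if $q_T>\max(q_\mu,q_{\mu'})$, and in that case $q_\nu=q_T$. Since $q_T$ is defined purely in bilipschitz terms (the sup over rates of fast loop families in a fixed free-homotopy class), this proves both the location of the special annular pieces and their rates are bilipschitz invariants. The main technical point is ensuring the model-based upper bound $q_T\le q_\nu$ in the special-annular case, which follows because any representative of $\gamma$ at scale $r$ must travel through the central $B(S^1\times I,\mathrm{id},q_\nu)$ region (or shorter boundary-parallel paths in the neighboring pieces, which have strictly smaller rates), and in the model every such representative has length bounded below by a multiple of $r^{q_\nu}$; this bound transfers to $X$ via the bilipschitz equivalence with $\widehat X$ established in the previous sections.
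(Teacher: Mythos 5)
Your proposal is correct and follows essentially the same approach as the paper: the paper also considers a boundary component $\gamma$ of the common fiber on a gluing torus between two adjacent nodes, notes that it is a fast loop whose maximal rate equals $\max\{q_\mu,q_{\mu'}\}$ (by the technique of Lemma \ref{le:nonparallel}) when no special annular piece separates them, and equals $q_{\nu}$ of the special annular piece when one does. Your additional detail about how the model pieces $A(q,q')$ and $B(S^1\times I,\mathrm{id},q')$ realize these rates is implicit in the paper's appeal to ``the argument of the previous lemma.''
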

\begin{proof} 
  Let $\gamma$ be a boundary component of the fiber $F_\nu$ for a
  \T-node $\nu$ and $\nu'$ the adjacent node for that boundary
  component.  Then $\gamma$ is a fast loop with rate at least
  $\max\{q_\nu,q_{\nu'}\}$. If there is no special annular piece between
  $M_\nu$ and $M_{\nu'}$ the argument of the previous lemma shows the
  maximal rate is exactly   $\max\{q_\nu,q_\nu'\}$, while if there is
  a special annular piece $M_{\nu''}$, then it is $q_{\nu''}$.
\end{proof}

Lemmas  \ref{le:nonparallel} and \ref{le:bilrates} prove 
bilipschitz invariance of all the $q_\nu$'s and complete the proof of Lemma
\ref{le:inv_pieces}. So the proof of Theorem
\ref{th:classification} is complete.

\section{Examples}\label{sec:examples}

In this section, we describe the bilipschitz geometry for several
examples of normal surface singularities $(X,0)$. Namely, for each of
them, we give the dual resolution graph $\Gamma$ of the minimal
resolution $\pi \colon (\widetilde X,E) \rightarrow (X,0)$ considered 
in Section  \ref{sec:decomp} with the additional data: 
 
 \begin{enumerate}
 \item[($1'$)] the nodes (\L-, \T- and \A-) represented by black
   vertices,
 \item[($2'$)] the arrows representing the strict transform of a
   generic linear form
\item[($3'$)]the rate $q_\nu$ weighting each node $\nu$
\end{enumerate}

The data ($1'$) and ($3'$) are equivalent respectively to
\eqref{it:1.9.1} and \eqref{it:1.9.3} of Theorem
\ref{th:classification}.

The data ($2'$) determines the multiplicities $m_\nu$ of the generic
linear form $z_1$ along each exceptional curve $E_\nu$, $\nu \in
\Gamma$.  In all the examples treated here, we are in one of the
following two situations: either the link $X^{(\epsilon)}$ is a
rational homology sphere or for each node $\nu \in \Gamma$,
$$\gcd \big( \gcd(m_\nu,m_\mu), \mu \in V_\nu) \big) =1,$$
where $V_\nu$ denotes the set of neighbor vertices of $\nu$ in
$\Gamma$ including arrows (which have multiplicities $1$).  In these
two situations, the multiplicities $m_\nu$ determine the embedded
topology in $X^{(\epsilon)}$ of the Milnor fiber of $z_1$, and then
data \eqref{it:1.9.2} of Theorem \ref{th:classification} (see for example
\cite{pichon}).

Recall that the $\cal L$-nodes of $\Gamma$ are the vertices carrying
at least one arrow or, equivalently, having rate $1$. The Tjurina
components of $\Gamma$ are the components obtained by removing the
$\cal L$-nodes and adjacent edges. Then, according to Section
\ref{sec:thick-thin} and Theorem \ref{th:main in intro}, the
thick-thin decomposition $(X,0)=\bigl(\bigcup (Y_i,0)\bigr) \cup
\bigl(\bigcup (Z_j,0)\bigr)$ of $(X,0)$ is read from the graph
$\Gamma$ as follows:
 \begin{itemize}
 \item The thin pieces $(Z_j,0)$ of $(X,0)$ correspond to the Tjurina
   components which are not bamboos.
 \item The thick pieces $(Y_i,0)$ of $(X,0)$ are in bijection with the
   $\cal L$-nodes of $\Gamma$. They correspond to the connected
   components of the graph 
   obtained by removing from $\Gamma$ the Tjurina components which are
   not bamboos and their adjacent edges.
\end{itemize}
Each example has been computed as follows: we start by computing the
graph $\Gamma$ with \L-nodes and arrows. Then for each Tjurina
component, we determine the \A-nodes and the rates at \A- and \T-nodes
either by studying the Puiseux expansions of the branches of the
discriminant curve $\Delta$ and the cover $\ell$, or by determining
the strict transform of the polar curve $\Pi^*$ using the equation of
$X$ and the multiplicities of the coordinates functions in the graph,
or by using Hironaka numbers (see Remark \ref{rem:hironaka}).
 
In \ref{ex:superexample} we will treat an example of a
superisolated singularity in all detail. We start by presenting the
thick-thin decomposition for this family of singularities.

\begin{example}[Superisolated singularities] \label{ex:super} A
hypersurface singularity $(X,0)$ with equation $ f_d (x,y,z)+ f_{d+1}
(x,y,z)+ \ldots =0$, where each $f_k$ is an homogeneous polynomial of
degree $k$, is {\it superisolated} if the projective plane curve $C :=
\{f_d=0\} \subset \mathbb P^2$ is reduced with isolated singularities
$\{p_i\}_i$ and these points $p_i$ are not situated on the projective
curve $\{f_{d+1}=0\}$ (see \cite{Lu}, \cite{LMN}). Such a singularity
is resolved by the blow-up $\pi_1$ of the origin of $\C^3$. Therefore,
the thick zones are in bijection with the components of the
projectivized tangent cone $C$.  Moreover, the resolution $\pi \colon
(\widetilde X,E) \rightarrow (X,0)$ introduced in Section 
\ref{sec:thick-thin} is obtained by composing $\pi_1$ with the minimal
resolutions of the plane curve germs $(C,p_i)$, and as the curve $C$
is reduced, no Tjurina component of $\pi$ is a bamboo. Therefore, the
thin zones are in bijection with the points $p_i$.

The embedded topological type of $(X,0)$ does not depend on the $f_k$,
$k >d$ providing $f_{d+1}=0$ does not contain any of the singular
points $p_i$ of $C$.  The previous arguments show the thick-thin
decomposition of $(X,0)$ also only depends on $C$ and does not change
when replacing the equation by $f_d + l^{d+1} = 0$, where $l$ is a
generic linear form.

For example, taking $C$ with smooth irreducible components
intersecting transversely, we obtain a thick-thin decomposition whose
thin zones are all thickened tori, and whose topology comes from the
thick pieces (in fact the genus of the irreducible components of $C$)
and cycles in the resolution graph.
\end{example}
\begin{example}[$(X,0)$ with equation $
  (zx^2+y^3)(x^3+zy^2)+z^7= 0$]
\label{ex:superexample}

This is a superisolated singularity and we compute the resolution
graph with L-nodes and strict transform of the generic linear form by
blowing-up the origin and then resolving the singularities of the
exceptional divisor.  We get two \L-curves intersecting at five points
and one visible Tjurina component in the graph. Blowing-up the five
intersection points, we then obtain five additional Tjurina components
each consisting of one node between the two \L-nodes. We
  obtain the resolution graph of Fig.~\ref{fig:7}.   The numbers in
  parentheses are the multiplicities of the generic linear form while
  the others are the self-intersection numbers (Euler classes) of the
  exceptional curves.

  \begin{figure}[ht]
  \begin{center}
\begin{tikzpicture}
  \draw[thin ](-2,0)--(-1,1);
  \draw[thin ](0:0)--(-1,1);
  \draw[thin ](0:0)--(1,1);
  \draw[thin ](1,1)--(2,0);
   \draw[thin ](1,1)--(1.5,2.5);
   \draw[thin ](-1,1)--(-1.5,2.5);
   \draw[ fill] (1.5,2.5)circle(2.5pt);
   \draw[ fill] (-1.5,2.5)circle(2.5pt);
   \draw[thin ](-1.5,2.5)--(1.5,2.5);
     
\draw[fill] (-1,1)circle(2.5pt);
\draw[fill] (1,1)circle(2.5pt);
\draw[fill] (0,0)circle(2.5pt);
\draw[fill=white] (2,0)circle(2.5pt);
\draw[fill=white] (-2,0)circle(2.5pt);

\draw[thin] (-1.5,2.5)..controls (-0.5,3) and (0.5,3)..(1.5,2.5);
\draw[thin] (-1.5,2.5)..controls (-0.5,2) and (0.5,2)..(1.5,2.5);
\draw[thin] (-1.5,2.5)..controls (-0.5,3.5) and (0.5,3.5)..(1.5,2.5);
\draw[thin] (-1.5,2.5)..controls (-0.5,1.5) and (0.5,1.5)..(1.5,2.5);
\node(a)at(-2,-0.35){-2};
\node(a)at(-2.4,0){(5)};

\node(a)at(0,-0.35){-5};
\node(a)at(0,0.4){(4)};

\node(a)at(2,-0.35){-2};
\node(a)at(2.4,0){(5)};

\node(a)at(-1,0.65){-1};
\node(a)at(-1.5,1){(10)};

\node(a)at(1,0.65){-1};
\node(a)at(1.5,1){(10)};

\node(a)at(-0.4,3.5){-1};
\node(a)at(0.3,3.5){(2)};
\node(a)at(-0.4,1.6){-1};
\node(a)at(0.3,1.5){(2)};
\node(a)at(-1.7,2.2){-23};
\node(a)at(-1.7,2.85){(1)};

\node(a)at(1.7,2.2){-23};
\node(a)at(1.7,2.85){(1)};

  \draw[thin,>-stealth,->](1.5,2.5)--+(1.2,0.4);
       \draw[thin,>-stealth,->](1.5,2.5)--+(1.3,0);
         \draw[thin,>-stealth,->](1.5,2.5)--+(1.2,-0.4);
         
            \draw[thin,>-stealth,->](-1.5,2.5)--+(-1.2,0.4);
       \draw[thin,>-stealth,->](-1.5,2.5)--+(-1.3,0);
         \draw[thin,>-stealth,->](-1.5,2.5)--+(-1.2,-0.4);

   \draw[ fill] (0,2.5)circle(2.5pt);
           \draw[ fill] (0,2.86)circle(2.5pt);
             \draw[ fill] (0,2.12)circle(2.5pt);
               \draw[ fill] (0,1.75)circle(2.5pt);
                 \draw[ fill] (0,3.25)circle(2.5pt);

  \end{tikzpicture} 
  \end{center} 
\caption{}\label{fig:7}
\end{figure}

 Computing the multiplicities of a generic linear
  combination of $f_x, f_y$ and $f_z$ as in Example \ref{ex:E8}, we
  obtain that the polar curve has 14 branches and that the resolution
  graph of the minimal resolution which resolves the basepoints of the
  family of polar curves is as in Fig.~\ref{fig:8} (the number at each
  vertex is the multiplicity of a generic combination $af_x + b f_y +
  cf_z$):
  \begin{figure}[ht]
  \begin{center}
\begin{tikzpicture}
  \draw[thin,>-stealth,->](-1.5,2.5)--+(-1.2,0.4);
   \draw[thin,>-stealth,->](-1.5,2.5)--+(-1.2,0);
    \draw[thin,>-stealth,->](-1.5,2.5)--+(-1.2,-0.4);
    
     \draw[thin,>-stealth,->](1.5,2.5)--+(1.2,0.4);
   \draw[thin,>-stealth,->](1.5,2.5)--+(1.2,0);
    \draw[thin,>-stealth,->](1.5,2.5)--+(1.2,-0.4);

 \draw[thin,>-stealth,->](-2,0)--+(-1.2,0.4);
  \draw[thin,>-stealth,->](2,0)--+(1.2,0.4);
    \draw[thin,>-stealth,->](0,0)--+(0,1.4);

    \draw[thin,>-stealth,->](0,3.25)--+(0.2,1.2);
     \draw[thin,>-stealth,->](0,2.86)--+(0.5,1.1);
                  \draw[thin,>-stealth,->](0,2.5)--+(0.6,1);
                   \draw[thin,>-stealth,->](0,2.12)--+(0.7,0.9);

              \draw[thin,>-stealth,->](0,1.75)--+(0.8,0.7);

 \draw[] (-2,0)circle(2.5pt);
  \draw[thin ](-2,0)--(-1,1);

  \draw[thin ](0:0)--(-1,1);
 
     \draw[thin ](0:0)--(1,1);

        \draw[thin ](1,1)--(2,0);
            \draw[] (2,0)circle(2.5pt);
   \draw[thin ](1,1)--(1.5,2.5);
    \draw[thin ](-1,1)--(-1.5,2.5);
    \draw[ fill] (1.5,2.5)circle(2.5pt);
     \draw[ fill] (-1.5,2.5)circle(2.5pt);
     \draw[thin ](-1.5,2.5)--(1.5,2.5);

\draw[fill] (-1,1)circle(2.5pt);
\draw[fill] (1,1)circle(2.5pt);
\draw[fill] (0,0)circle(2.5pt);
\draw[fill=white] (2,0)circle(2.5pt);
\draw[fill=white] (-2,0)circle(2.5pt);

\draw[thin] (-1.5,2.5)..controls (-0.5,3) and (0.5,3)..(1.5,2.5);
\draw[thin] (-1.5,2.5)..controls (-0.5,2) and (0.5,2)..(1.5,2.5);
\draw[thin] (-1.5,2.5)..controls (-0.5,3.5) and (0.5,3.5)..(1.5,2.5);
\draw[thin] (-1.5,2.5)..controls (-0.5,1.5) and (0.5,1.5)..(1.5,2.5);
\node(a)at(2,-0.35){29};
\node(a)at(1,0.65){57};
\node(a)at(0,-0.35){23};
\node(a)at(-1,0.65){57};
\node(a)at(-2,-0.35){29};

\node(a)at(-0.4,3.5){11};
\node(a)at(-0.4,1.6){11};
\node(a)at(-1.7,2.2){5};
\node(a)at(1.7,2.2){5};
 
     \draw[ fill] (0,2.5)circle(2.5pt);
           \draw[ fill] (0,2.86)circle(2.5pt);
             \draw[ fill] (0,2.12)circle(2.5pt);
               \draw[ fill] (0,1.75)circle(2.5pt);
                 \draw[ fill] (0,3.25)circle(2.5pt);

  \end{tikzpicture} 
  \end{center}
\caption{}\label{fig:8}\end{figure}

  Therefore, the discriminant curve $\Delta$ of a generic linear
  projection $\ell\colon (X,0) \to (C^2,0)$ has $14$ branches and $12$
  distinct tangent lines $L_1,\ldots,L_{12}$.
  
  Let $\Delta_0$ be a branch of $\Delta$ and let $\Pi_0$ be the branch
  of the polar curve such that $\ell(\Pi_0)=\Delta_0$. The
  intersection of $\Delta_0$ with the Milnor fiber of a generic form
  on $\C^2$ is the multiplicity of $\Pi_0$, so it is the denominator
  $d$ of the last characteristic Puiseux exponent of $\Delta_0$. 
  Considering the restriction $\ell|_{\Pi_0} \colon \Pi_0 \to
  \Delta_0 $, we obtain that $d$ divides the multiplicity of the
  generic linear form at the exceptional curve intersecting $\Pi_0^*$,
  which can be read in Fig.~\ref{fig:7}. This gives a bound on
  $d$, which enables one to compute the Puiseux expansion of all the
  branches $\Delta$ using, for example, Maple.
We obtain that $\Delta$     decomposes  as follows:
  \begin{itemize}
\item three branches $\Delta_1, \Delta_2, \Delta_3$ tangent to
  the same line $L_1$, each with a single characteristic Puiseux exponent,
  respectively $6/5, 6/5$ and $5/4$, and Puiseux
  expansions
  \begin{tabbing}
    $\Delta_1$:\qquad\= $u = av +  b v^{6/5} + c v^{7/5} +\ldots $\\
$\Delta_2$:\qquad\>  $u = av +b' v^{6/5} + c' v^{7/5} +\ldots $\\
$ \Delta_3$:\qquad\> $u =  av +b'' v^{5/4} + c'' v^{3/2}+ \ldots $
  \end{tabbing}
\item For each $i=2,\ldots,6$, one branch tangent to $L_i = \{u=a_i
  v\} $ with $3/2$ as single characteristic Puiseux exponent: $ u=a_iv
  + b_i v^{3/2} + \ldots $.
\item $6$ branches, each lifting to a component of the polar whose
  strict transform meets one of the two \L-curves at a smooth point
  (these are ``moving polar components'' in the
    terminology introduced in the proof of Lemma
    \ref{le:2}\eqref{lem1}). So these do not contribute to the bilipschitz
  geometry.
\end{itemize}

The lines $L_2,\ldots,L_6$ correspond to the $5$ thin zones coming from
the $5$ one-vertex Tjurina components (compare with Lemma
\ref{prop:intersecting L}). These five vertex are thus \A-nodes with rate
equal to the characteristic Puiseux exponent $3/2$.

The line $L_1$ is the projection of the exceptional line tangent to
the thin piece $Z_1$ associated with the big Tjurina component
$\Gamma_1$. In order to compute the rate $q_\nu$ at each node $\nu$ of
$\Gamma_1$, we would have to study explicitly the restriction of the
cover $\ell$ to $Z_1$ over a carrousel decomposition of its image and
the reduction to the bilipschitz model, as in Section
\ref{sec:carrousel1}.  The following remark will enable one to
circumvent this technical computation.
\begin{remark}\label{rem:hironaka}
  Let $\ell = (z_1,z_2)\colon (X,0) \to (\C,0)$ be a generic linear
  projection. Denote by $(u,v)=(z_1,z_2)$ the coordinates in the
  target $\C^2$. Let $\Gamma_j$ be a Tjurina component of $\Gamma$ and
  let $\Pi_j$ be a branch of the polar curve of $\ell$ contained in
  the thin piece $Z_j$. Suppose the branch $\Delta_j =
  \ell(\Pi_j)$ of the discriminant curve has a single characteristic
  Puiseux exponent $p$, so the Puiseux expansion of $\Delta_j$ is
  of the form:
$$v=a_1u + a_2 u^2 + \ldots + a_k u^k + b u^{p}+\ldots,$$
where $k$ is the integral part of $p$ and $b \neq
0$. We perform the change of variable $v'=v-\sum_{i=1}^k a_iu^i$. In
other word, we consider the projection $(z_1,z'_2)$ instead of
$(z_1,z_2)$, where $z'_2 = z_2 - \sum_{i=1}^k a_iz_1^i$. This
projection has the same polar and discriminant curve as $\ell$, and
in the new coordinates, the Puiseux expansion of $\Delta_j$ is
$v'=bu^{p}$. Let $\nu$ be a node in $\Gamma_j$ such that $
\Pi_j\subset M_\nu $, i.e. $q_\nu=p$. Then, according to
\cite{le-maugendre-weber, michel}, we have:
$$p=\frac{m_\nu(z'_2)}{m_\nu(z_1)}$$
The quotient $\frac{m_\nu(z'_2)}{m_\nu(z_1)}$ is called the {\it
  Hironaka number} of $\nu$ (with respect to the projection
$(z_1,z'_2)$).
\end{remark} 
We return to the example. Since each branch $\Delta_1$, $\Delta_2$
and $\Delta_3$ has a single characteristic exponent and no inessential
exponent before in the expansion, we can apply the remark above. We
use $z'_2 = x+y$. Let $E_1,\ldots,E_5$ be the exceptional curves
associated with the vertices of the big Tjurina component indexed from
left to right. We first compute the multiplicities of $z_1$, $x$ and
$y$ along the exceptional curves $E_i$. Then $z'_2 =x+y $ is a generic
linear combination of $x$ and $y$ and we obtain its multiplicities as
the minimum of that of $x$ and $y$ on each vertex:
\[
\begin{array}{cccccc}
& E_1&E_2&E_3&E_4& E_5 \\
 z_1: &  5 &10&4&10&5  \\
x: & 7 & 13&5&12&6   \\
y:  &  6&12&5&13&7   \\
x+y:  & 6&12&5&12&6   
\end{array}
\]
Therefore the Hironaka numbers at the vertices of the Tjurina
component $\Gamma_1$ are:
\[
\begin{array}{ccccc}
 E_1&E_2&E_3&E_4& E_5 \\
 6/5 & 6/5& 5/4& 6/5& 6/5
\end{array}
\]

We thus obtain that the rates at the two \T-nodes equal $6/5$, and
that the vertex in the middle is an \A-node with rate $5/4$.
The bilipschitz geometry of $(X,0)$ is then given by the following
graph. The rate at each node is written in italics.
\begin{center}
\begin{tikzpicture}
  \draw[] (-2,0)circle(2.5pt);
  \draw[thin ](-2,0)--(-1,1);

  \draw[thin ](0:0)--(-1,1);
 
     \draw[thin ](0:0)--(1,1);
     
        \draw[thin ](1,1)--(2,0);
            \draw[] (2,0)circle(2.5pt);
   \draw[thin ](1,1)--(1.5,2.5);
    \draw[thin ](-1,1)--(-1.5,2.5);
    \draw[ fill] (1.5,2.5)circle(2.5pt);
     \draw[ fill] (-1.5,2.5)circle(2.5pt);
     \draw[thin ](-1.5,2.5)--(1.5,2.5);
     
\draw[fill ] (-1,1)circle(2.5pt);
\draw[fill  ] (1,1)circle(2.5pt);
\draw[fill] (0,0)circle(2.5pt);
\draw[fill=white] (2,0)circle(2.5pt);
\draw[fill=white] (-2,0)circle(2.5pt);
      
         \draw[ fill] (0,2.5)circle(2.5pt);
           \draw[ fill] (0,2.86)circle(2.5pt);
             \draw[ fill] (0,2.12)circle(2.5pt);
               \draw[ fill] (0,1.75)circle(2.5pt);
                 \draw[ fill] (0,3.25)circle(2.5pt);
     
\draw[thin] (-1.5,2.5)..controls (-0.5,3) and (0.5,3)..(1.5,2.5);
\draw[thin] (-1.5,2.5)..controls (-0.5,2) and (0.5,2)..(1.5,2.5);
\draw[thin] (-1.5,2.5)..controls (-0.5,3.5) and (0.5,3.5)..(1.5,2.5);
\draw[thin] (-1.5,2.5)..controls (-0.5,1.5) and (0.5,1.5)..(1.5,2.5);
\node(a)at(-2,-0.35){-2};
\node(a)at(0,-0.35){-5};
\node(a)at(2,-0.35){-2};
\node(a)at(-1,0.65){-1};
\node(a)at(1,0.65){-1};
\node(a)at(-0.3,3.5){-1};
\node(a)at(0.4,3.5){\it 3/2};
\node(a)at(-0.3,1.55){-1};
\node(a)at(0.4,1.55){\it 3/2};
\node(a)at(-1.7,2.8){\it 1};
\node(a)at(-1.7,2.2){-23};
\node(a)at(1.7,2.8){\it 1};
\node(a)at(1.7,2.2){-23};
\node(a)at(-1.5,1){\it 6/5};
\node(a)at(1.5,1){\it 6/5};
\node(a)at(0,0.4){\it 5/4};

 \draw[thin,>-stealth,->](1.5,2.5)--+(1.2,0.4);
       \draw[thin,>-stealth,->](1.5,2.5)--+(1.3,0);
         \draw[thin,>-stealth,->](1.5,2.5)--+(1.2,-0.4);
         
            \draw[thin,>-stealth,->](-1.5,2.5)--+(-1.2,0.4);
       \draw[thin,>-stealth,->](-1.5,2.5)--+(-1.3,0);
         \draw[thin,>-stealth,->](-1.5,2.5)--+(-1.2,-0.4);
\end{tikzpicture} 
\end{center} 
We now describe the same example via its
  carrousel. The carrousel sections for the six thin zones are
  illustrated somewhat schematically in Fig.~\ref{fig:9}. They consist
  of five carrousel sections $D_1,\dots, D_5$ for the five small thin
  zones containing components of the discriminant with Puiseux
  expansions of the form $y=a_ix+b_ix^{3/2}+\dots$ and one carrousel
  section $D_6$ for the ``large'' thin zone containing two components
  of the discriminant of the form $y=cx+d_ix^{6/5}+\dots$, $i=1,2$ and
  one of the form $y=cx+ex^{5/4}$. There are also six points
  representing the intersection points of components of the
  discriminant coming from the six moving polars in the thick zone.

For each $i=1,\dots,5$, $\ell^{-1}(D_i)$ consists of an annulus which
is a double branched cover of $D_i$ and four disks each of which
simply covers $D_i$ (the total degree must be the multiplicity of $X$,
which is $6$). So for each $i=1,\dots 5$ we see a special annular
piece between the two thick zones plus four $D$-pieces, two
contributing to each of the two thick zones. The carrousel section $D_6$
for the ``large'' thin zone represents two overlapping $B$-pieces
with rate $6/5$ connecting to a piece with rate $5/4$ inside. $\ell^{-1}(D_6)$
consists of two disks and a surface $F$ of genus $5$ which four-fold
covers $D_6$. The two disks correspond again to two $D$-pieces which
contribute to the thick zones, while the surface $F$ is made up of two
annuli over the inner disk of $D_6$ connecting to two surfaces of
genus $2$ over the outer annulus of $D_6$. The two annuli are in fact
two sections by the Milnor fiber of a single special annular piece of
rate $5/4$ connecting the two pieces of rate $6/5$ to form the
``large'' thin zone. 

\begin{figure}[ht]
\centering
\begin{tikzpicture} 
   \begin{scope}[yshift=5.5cm]
    \begin{scope}[yshift=-5.5cm]
     
    \draw[ fill=lightgray ] (2,0)circle(6pt);
  \draw[fill=black] ( 2.1,0)circle(0.5pt);
   \draw[] ( 1.9,0)circle(1.5pt);
 \draw[] ( 2.1,0)circle(1.5pt);
   \draw[fill=black] ( 1.9,0)circle(0.5pt);
   \node(a)at(2.55,0){$3/2$};
   
 \begin{scope} [xshift=1.5cm] 
 \draw[ fill=lightgray] (2,0)circle(6pt);
  \draw[fill=black] ( 2.1,0)circle(0.5pt);
   \draw[fill=black] ( 1.9,0)circle(0.5pt);
  \draw[] ( 2.1,0)circle(1.5pt);
   \draw[] ( 1.9,0)circle(1.5pt);
   \node(a)at(2.55,0){$3/2$};
  \end{scope}
  
   \begin{scope} [xshift=3cm] 
 \draw[ fill=lightgray] (2,0)circle(6pt);
  \draw[fill=black] ( 2.1,0)circle(0.5pt);
   \draw[fill=black] ( 1.9,0)circle(0.5pt);
  \draw[] ( 2.1,0)circle(1.5pt);
   \draw[] ( 1.9,0)circle(1.5pt);
   \node(a)at(2.55,0){$3/2$};
  \end{scope}
  
   \begin{scope} [xshift=4.5cm] 
 \draw[ fill=lightgray] (2,0)circle(6pt);
  \draw[fill=black] ( 2.1,0)circle(0.5pt);
   \draw[fill=black] ( 1.9,0)circle(0.5pt);
  \draw[] ( 2.1,0)circle(1.5pt);
   \draw[] ( 1.9,0)circle(1.5pt);
   \node(a)at(2.55,0){$3/2$};
  \end{scope}
  
   \begin{scope} [xshift=6cm] 
 \draw[ fill=lightgray] (2,0)circle(6pt);
  \draw[fill=black] ( 2.1,0)circle(0.5pt);
   \draw[fill=black] ( 1.9,0)circle(0.5pt);
  \draw[] ( 2.1,0)circle(1.5pt);
   \draw[] ( 1.9,0)circle(1.5pt);
   \node(a)at(2.55,0){$3/2$};
  \end{scope}

   \begin{scope}[yshift=-2.7cm]
    \draw[ ] (0:5) circle(2cm);

       \draw[ fill=lightgray] (5,0)circle(12pt);
  \draw[fill=black] ( 4.85,0.15)circle(0.5pt);
    \draw[fill=black] ( 4.85,-0.15)circle(0.5pt);
      \draw[fill=black] ( 5.15,0.15)circle(0.5pt);
    \draw[fill=black] ( 5.15,-0.15)circle(0.5pt);
  \draw[] ( 4.85,0.15)circle(2pt);
    \draw[] ( 4.85,-0.15)circle(2pt);
      \draw[] ( 5.15,0.15)circle(2pt);
    \draw[] ( 5.15,-0.15)circle(2pt);
    
       \draw[ fill=lightgray] (0:5)+(0:1.2)circle(3pt);
  \draw[fill=black] (0:5)+(0:1.2)circle(0.5pt);
  
       \draw[ fill=lightgray] (0:5)+(72:1.2)circle(3pt);
  \draw[fill=black] (0:5)+(72:1.2)circle(0.5pt);
  
       \draw[ fill=lightgray] (0:5)+(144:1.2)circle(3pt);
  \draw[fill=black] (0:5)+(144:1.2)circle(0.5pt);
  
       \draw[ fill=lightgray] (0:5)+(216:1.2)circle(3pt);
  \draw[fill=black] (0:5)+(216:1.2)circle(0.5pt);
  
       \draw[ fill=lightgray] (0:5)+(-72:1.2)circle(3pt);
  \draw[fill=black] (0:5)+(-72:1.2)circle(0.5pt);

       \draw[ fill=darkgray] (0:5)+(180:1.2)circle(3pt);
  \draw[fill=black] (0:5)+(180:1.2)circle(0.5pt);
  
       \draw[ fill=darkgray] (0:5)+(252:1.2)circle(3pt);
  \draw[fill=black] (0:5)+(252:1.2)circle(0.5pt);
  
       \draw[ fill=darkgray] (0:5)+(-36:1.2)circle(3pt);
  \draw[fill=black] (0:5)+(-36:1.2)circle(0.5pt);
  
       \draw[ fill=darkgray] (0:5)+(36:1.2)circle(3pt);
  \draw[fill=black] (0:5)+(36:1.2)circle(0.5pt);

       \draw[ fill=darkgray] (0:5)+(-252:1.2)circle(3pt);
  \draw[fill=black] (0:5)+(-252:1.2)circle(0.5pt);

     \node(a)at(-5:6.5){$6/5$};
          \node(a)at(1.6,-1){${\small 5/4}$};
          \draw(1.9,-1)--(4.8,0);
 
   \draw[fill=black] (1,1)circle(0.5pt);
           \draw[fill=black] (0.5,1)circle(0.5pt);
          \draw[fill=black] (0,1)circle(0.5pt);
             \draw[fill=black] (1,0)circle(0.5pt);
           \draw[fill=black] (0.5,0)circle(0.5pt);
          \draw[fill=black] (0,0)circle(0.5pt);

        \node(a)at(2,1.5){${\small 1}$};
    \end{scope}  
     \end{scope}  
          \end{scope}  
    \end{tikzpicture}  
\caption{}\label{fig:9}
\end{figure}

\end{example}
\begin{example}
  [Simple singularities] The bilipschitz geometry of the
  A-D-E singularities is given in Table \ref{tab:simple}. The right column
  gives the numbers of thin pieces and thick pieces.  According to
  Corollary \ref{cor:conical1}, when $(X,0)$ doesn't have a thin
  piece, then it has a single thick piece and $(X,0)$ is metrically
  conical. Otherwise $(X,0)$ is not metrically conical.
  \end{example}
\begin{table}[ht]
    \centering
%\begin{center}
\begin{tabular}{lcr}
Singularity & bilipschitz geometry & thick-thin pieces \\
&&\\
$A_1$ & 
  \begin{tikzpicture}
 \draw[fill ] (0,0)circle(2pt);   
 \draw[thin,>-stealth,->](0,0)--+(-0.6,0.6);
  \draw[thin,>-stealth,->](0,0)--+(0.6,0.6);
    \node(a)at(-0.4,0){\it 1};
 \end{tikzpicture} 
 & metrically conical \\

&& \\
 $ A_{2n+1}, n \geq 1$    &       \begin{tikzpicture}
 
\draw[fill ] (0,0)circle(2pt);
 \draw[thin,>-stealth,->](0,0)--+(-0.6,0.6);
  \draw[thin,>-stealth,->](4.4,0)--+(0.6,0.6);

    \draw[thin ](0,0)--(0.7,0);
      \draw[thin ](0.7,0)--(0.9,0);
     \draw[thin, dotted ](0.9,0)--(1.3,0);
        \draw[thin ](1.3,0)--(1.5,0);
          \draw[thin ](1.5,0)--(2.2,0);
           \draw[thin ](2.2,0)--(2.9,0);
           
   \draw[thin ](2.9,0)--(3.1,0);
     \draw[thin, dotted ](3.1,0)--(3.5,0);
        \draw[thin ](3.5,0)--(3.7,0);
              \draw[thin ](3.7,0)--(4.4,0);

\draw[fill=white ] (0.7,0)circle(2pt);
\draw[fill=white ] (1.5,0)circle(2pt);
\draw[fill  ] (2.2,0)circle(2pt);
\draw[fill=white ] (2.9,0)circle(2pt);
\draw[fill=white ] (3.7,0)circle(2pt);
\draw[fill  ] (4.4,0)circle(2pt);

\node(a)at(0,0.3){\it 1};
\node(a)at(2.2,0.3){\it n+1};
\node(a)at(4.4,0.3){\it 1};
 \end{tikzpicture} 
 
& 2 thick, 1 thin \\
 &&\\

   $A_{2n}, n\geq 1$   &     
  
   \begin{tikzpicture}
 
\draw[fill ] (0,0)circle(2pt);
 \draw[thin,>-stealth,->](0,0)--+(-0.6,0.6);
  \draw[thin,>-stealth,->](5.8,0)--+(0.6,0.6);

    \draw[thin ](0,0)--(0.7,0);
      \draw[thin ](0.7,0)--(0.9,0);
     \draw[thin, dotted ](0.9,0)--(1.3,0);
        \draw[thin ](1.3,0)--(1.5,0);
          \draw[thin ](1.5,0)--(2.2,0);
           \draw[thin ](2.2,0)--(2.9,0);
           
   \draw[thin ](2.9,0)--(3.6,0);
     \draw[thin ](3.6,0)--(3.8,0);
        \draw[thin, dotted](3.8,0)--(4.2,0);
              \draw[thin ](4.2,0)--(5.8,0);

\draw[fill=white ] (0.7,0)circle(2pt);
\draw[fill=white ] (1.5,0)circle(2pt);
\draw[fill =white  ] (2.2,0)circle(2pt);

\draw[fill = ] (2.9,0)circle(2pt);

\draw[fill=white  ] (3.6,0)circle(2pt);
\draw[fill=white ] (4.4,0)circle(2pt);
\draw[fill =white ] (5.1,0)circle(2pt);
\draw[fill = ] (5.8,0)circle(2pt);

\node(a)at(0,0.3){\it 1};
\node(a)at(2.9,0.3){\it n+1/2};
\node(a)at(4.4,0.3){\it 1};

\node(a)at(2.2,-0.3){ -3};
\node(a)at(2.9,-0.3){ -1};
\node(a)at(3.6,-0.3){ -3};

 \end{tikzpicture} 

&  2 thick, 1 thin \\
&  & \\
$D_4$ & 

 \begin{tikzpicture}
 \draw[fill =white ] (0,0)circle(2pt);
  \draw[fill =white ] (0.7,0.7)circle(2pt);
   \draw[thin ](0,0)--(0.7,0);
      \draw[thin ](0.7,0)--(0.7,0.7);
        \draw[thin ](0.7,0)--(1.4,0);

  \draw[fill   ] (0.7,0)circle(2pt);

\draw[thin,>-stealth,->](0.7,0)--+(0.6,0.6);
  \draw[fill =white ] (0,0)circle(2pt);
  \draw[fill =white ] (0.7,0.7)circle(2pt);
    \draw[fill =white ] (1.4,0)circle(2pt);

\node(a)at(0.7,-0.3){ \it 1};

  \end{tikzpicture}  & metrically conical \\
& & \\
$ D_{k}, k\geq 5$ &   
 \begin{tikzpicture}
 \draw[fill =white ] (0,0)circle(2pt);
  \draw[fill =white ] (0.7,0.7)circle(2pt);
   \draw[thin ](0,0)--(0.7,0);
      \draw[thin ](0.7,0)--(0.7,0.7);
         \draw[thin ](0.7,0)--(1.4,0);
          \draw[thin ](1.4,0)--(1.6,0);
           \draw[thin, dotted ](1.6,0)--(2,0);
            \draw[thin ](2,0)--(2.2,0);
             \draw[thin ](2.2,0)--(3.6,0);

  \draw[fill   ] (0.7,0)circle(2pt);
    \draw[fill   ] (2.9,0)circle(2pt);
 
\draw[thin,>-stealth,->](2.9,0)--+(0.6,0.6);
  \draw[fill =white ] (0,0)circle(2pt);
  \draw[fill =white ] (0.7,0.7)circle(2pt);
    \draw[fill =white ] (1.4,0)circle(2pt);
      \draw[fill =white ] (2.2
      ,0)circle(2pt);
\draw[fill =white ] (3.6,0)circle(2pt);

\node(a)at(0.7,-0.3){ \it k/2-1};
\node(a)at(2.9,-0.3){ \it 1};
  \end{tikzpicture} 
 &
 1 thick, 1 thin
 \\
 && \\
$E_6$ &
 \begin{tikzpicture}
 \draw[fill   ] (0,0)circle(2pt);
   \draw[fill   ] (0.7,0)circle(2pt);
 
   \draw[thin ](0,0)--(2.1,0);
      \draw[thin ](0.7,0)--(0.7,1.4);
    \draw[fill=white   ] (1.4,0)circle(2pt);
  \draw[fill=white  ] (2.1,0)circle(2pt);
     \draw[fill =white ] (0.7,0.7)circle(2pt);
        \draw[fill =white ] (0.7,1.4)circle(2pt);
 
\draw[thin,>-stealth,->](0,0)--+(-0.6,0.6);

\node(a)at(1,0.3){ \it 4/3};
\node(a)at(0.1,0.3){ \it 1};

  \end{tikzpicture} & 1 thick, 1 thin \\
 && \\
 $E_7$ & 
 \begin{tikzpicture}
  \draw[fill   ] (-0.7,0)circle(2pt);
   \draw[fill   ] (0.7,0)circle(2pt);
   \draw[thin ](-0.7,0)--(2.8,0);
      \draw[thin ](0.7,0)--(0.7,0.7);
  \draw[fill=white   ] (1.4,0)circle(2pt);
  \draw[fill=white  ] (2.1,0)circle(2pt);
     \draw[fill =white ] (0.7,0.7)circle(2pt);      
        \draw[fill =white ] (2.8,0)circle(2pt);
  \draw[fill=white   ] (0,0)circle(2pt);
\draw[thin,>-stealth,->](-0.7,0)--+(-0.6,0.6);
 
\node(a)at(1,0.3){ \it 3/2};
\node(a)at(-0.6,0.3){ \it 1};

  \end{tikzpicture} 
    & 1 thick, 1 thin \\
 &&\\
 $E_8$ & 
 \begin{tikzpicture}
  \draw[fill   ] (-2.1,0)circle(2pt);
   \draw[fill   ] (0.7,0)circle(2pt);
   \draw[thin ](-2.1,0)--(2.1,0);
      \draw[thin ](0.7,0)--(0.7,0.7);
  \draw[fill=white   ] (1.4,0)circle(2pt);
  \draw[fill=white  ] (2.1,0)circle(2pt);
     \draw[fill =white ] (0.7,0.7)circle(2pt);      
          \draw[fill=white   ] (0,0)circle(2pt);
\draw[thin,>-stealth,->](-2.1,0)--+(-0.6,0.6);
  \draw[fill=white   ] (-1.4,0)circle(2pt);
    \draw[fill=white   ] (-0.7,0)circle(2pt);

\node(a)at(1,0.3){ \it 5/3};
\node(a)at(-2,0.3){ \it 1};

  \end{tikzpicture} 
  &  1 thick, 1 thin
\\&&
\end{tabular}
     \caption{Bilipschitz geometry of simple singularities}~\\[-16pt]
    \label{tab:simple}
  \end{table}
\begin{example}
  [Hirzebruch-Jung singularities]
The Hirzebruch-Jung singularities (surface cyclic quotient
singularities) were described by Hirzebruch as the normalizations of
the singularities $(Y,0)$ with equation $z^p - x^{p-q}y =0$ in $\mathbb
C^3$, where $1\leq q<p$ are coprime positive integers. The link
of the normalization $(X,p)$ of $(Y,0)$ is the lens space $L(p,q)$
(see \cite{H1,HNK}) and the minimal resolution graph is a bamboo

 \begin{center}
  \begin{tikzpicture}
  \draw[fill=white] (0,0)circle(2pt);
   \draw[fill =white  ] (0.7,0)circle(2pt);
   \draw[thin ](0,0)--(0.9,0);
      \draw[thin, dotted ](0.9,0)--(2.7,0);
       \draw[thin ](2.7,0)--(2.9,0);
  \draw[fill=white  ] (2.9,0)circle(2pt);
  
      \draw[fill=white] (0,0)circle(2pt);
   \draw[fill =white  ] (0.7,0)circle(2pt);

\node(a)at(0,0.3){ $-b_1$};
\node(a)at(0.7,0.3){ $-b_2$};
\node(a)at(2.9,0.3){ $-b_n$};

  \end{tikzpicture} 
  \end{center}
  where $p/q = [b_1,b_2,\ldots,b_n]$.  The thick-thin decomposition of
  $(X,0)$ has been already described in the proof of Theorem
  \ref{th:thin fast loops}: the generic linear form has multiplicity
  $1$ along each $E_{i}$, and the \L-nodes are the two extremal
  vertices of the bamboo and any vertex with $b_i\ge 3$.  To get the
  adapted resolution graph of section \ref{sec:thick-thin} we blow up
  once between any two adjacent \L-nodes. Then the subgraph $\Gamma_j$
  associated with a thin piece $Z_j$ is either a $(-1)$-weighted
  vertex or a maximal string $\nu_i,\nu_{i+1},\dots,\nu_k$ of
  $(-2)$-weighted vertices
  excluding $\nu_1$ and $\nu_n$.
  There are no \T-nodes, so each Tjurina component is an \A-string
giving a special annulus.. The rate of the special
    annulus is $3/2$ if the Tjurina component is a $(-1)$-vertex and
    $(m+3)/2$ for a string of $m$ $(-2)$-vertices.
\end{example}
\begin{example}
[Cusp singularities] Let $(X,0)$ be a cusp singularity
(\cite{H2}). The dual graph of the minimal resolution of $(X,0)$
consists of a cycle of rational curves with Euler weights $\le -2$ and
at least one weight $\le -3$. Since such a singularity is taut, its
maximal cycle coincides with its fundamental cycle, and the generic
linear form has multiplicity $1$ along each irreducible component
$E_i$. Therefore, the \L-nodes are the vertices which carry Euler
class $\le -3$, and the argument is now similar to the lens space case
treated above: we blow up once between any two adjacent \L-nodes. Then
the subgraph $\Gamma_j$ associated with a thin piece $Z_j$ is either a
$(-1)$-weighted vertex or a maximal string
$\nu_i,\nu_{i+1},\dots,\nu_k$ of $(-2)$-weighted vertices.
As the distance $\epsilon$ to the origin goes to zero, we then see the
link $X^{(\epsilon)}$ as a ``necklace" of thick zones separated by
thin zones, each of the latter vanishing to a circle. Again,
the thin zones are special annuli with rates as in the
  previous example.
\end{example}
\begin{example}
[Brian\c con-Speder examples]

  The Brian\c con-Speder example we will consider is the
  $\mu$-constant family of singularities $$x^5+z^{15}+y^7z+txy^6=0$$
  depending on the parameter $t$. We will see that the bilipschitz
  geometry changes very radically when $t$ becomes $0$ (the same is
  true for the other Brian\c con-Speder families).

The minimal resolution graph of $(X,0)$ is
\begin{center}
 \begin{tikzpicture}
     \draw[thin ](0,0)--(1.5,0);
   
        \draw[fill=white] (0,0)circle(2pt);
   \draw[fill=white   ] (1.5,0)circle(2pt);

\node(a)at(0,0.3){ $-3$};
\node(a)at(0,-0.3){ [8]};
\node(a)at(1.5,0.3){ $-2$};

  \end{tikzpicture} 
\end{center}

For any $t$ the curves $x=0$ and $y=0$ are 

\begin{center}
 \begin{tikzpicture}
 
   \draw[thin ](0,0)--(1.5,0);
  
\draw[thin,>-stealth,->](0,0)--+(-1,0.2);
\draw[thin,>-stealth,->](0,0)--+(-1,0.4);
\draw[thin,>-stealth,->](0,0)--+(-1,0.6);
\draw[thin,>-stealth,->](0,0)--+(-1,0.8);
\draw[thin,>-stealth,->](0,0)--+(-1,1);
\draw[thin,>-stealth,->](0,0)--+(-1,1.2);
\draw[thin,>-stealth,->](0,0)--+(-1,1.4);

\draw[thin,>-stealth,->](1.5,0)--+(1.3,0.2);

 \draw[fill =white  ] (0,0)circle(2pt);
   \draw[fill=white   ] (1.5,0)circle(2pt);

\node(a)at(0.1,0.3){ $-3$};
\node(a)at(-0.2,-0.3){ [8]};
\node(a)at(0.2,-0.3){ (3)};
\node(a)at(1.5,-0.3){ (2)};
\node(a)at(1.5,0.3){ $-2$};

  \draw[thin ](5,0)--(6.5,0);

   \draw[thin,>-stealth,->](5,0)--+(-1,0.2);
\draw[thin,>-stealth,->](5,0)--+(-1,0.4);
\draw[thin,>-stealth,->](5,0)--+(-1,0.6);
\draw[thin,>-stealth,->](5,0)--+(-1,0.8);
\draw[thin,>-stealth,->](5,0)--+(-1,1);

 \draw[fill=white   ] (5,0)circle(2pt);
   \draw[fill=white   ] (6.5,0)circle(2pt);
   
\node(a)at(5.1,0.3){ $-3$};
\node(a)at(5.2,-0.3){ (2)};
\node(a)at(4.8,-0.3){ [8]};
\node(a)at(6.5,0.3){ $-2$};
\node(a)at(6.5,-0.3){(1)};

  \end{tikzpicture} 
  \end{center}

The curve $z=0$ is 
\begin{center}
 \begin{tikzpicture}
 
   \draw[thin ](0,0)--(1.5,0);
\draw[thin,>-stealth,->](1.5,0)--+(1.3,0.2);
\node(a)at(3.1,0.3){ (5)};

 \draw[fill =white  ] (0,0)circle(2pt);
   \draw[fill =white  ] (1.5,0)circle(2pt);
  
\node(a)at(0.1,0.3){ $-3$};
\node(a)at(-0.2,-0.3){ [8]};
\node(a)at(0.2,-0.3){ (1)};
\node(a)at(1.5,-0.3){ (3)};
\node(a)at(1.5,0.3){ $-2$};
\node(a)at(-.7,.7){ $t=0$};

  \draw[thin ](6,0)--(7.5,0);
   
   \draw[thin,>-stealth,->](6,0)--+(-1,0.2);
\draw[thin,>-stealth,->](6,0)--+(-1,0.4);

 \draw[thin,>-stealth,->](7.5,0)--+(1,0.2);

 \draw[fill =white  ] (6,0)circle(2pt);
   \draw[fill =white   ] (7.5,0)circle(2pt);

\node(a)at(6.1,0.3){ $-3$};
\node(a)at(6.2,-0.3){ (1)};
\node(a)at(5.8,-0.3){ [8]};
\node(a)at(7.5,0.3){ $-2$};
\node(a)at(7.5,-0.3){(1)};
\node(a)at(4.5,.7){ $t\neq0$};

\end{tikzpicture} 
\end{center}

A generic linear form for $t\ne0$ is $z=0$, given by the right hand
graph above.  To resolve the linear system of hyperplane sections one
must blow up twice at the left node, giving three \L-nodes:
\begin{center}
 \begin{tikzpicture}
 
   \draw[thin ](0,0)--(1.5,0);
    \draw[thin ](0,0)--+(-1.4,0.7);
      \draw[thin ](0,0)--+(-1.4,-0.7);

\draw[thin,>-stealth,->](1.5,0)--+(1.3,0.2);
\draw[thin,>-stealth,->](-1.4,0.7)--+(-1.2,0.55);
\draw[thin,>-stealth,->](-1.4,-0.7)--+(-1.2,-0.555);

 \draw[fill=white   ] (0,0)circle(2pt);
   \draw[fill=white   ] (1.5,0)circle(2pt);
   
     \draw[fill=white    ] (-1.4,0.7)circle(2pt);
       \draw[fill=white   ] (-1.4,-0.7)circle(2pt);
   
\node(a)at(0.1,0.3){ $-5$};
\node(a)at(-0.2,-0.3){ [8]};
\node(a)at(0.2,-0.3){ (1)};
\node(a)at(1.5,-0.3){ (1)};
\node(a)at(1.5,0.3){ $-2$};

\node(a)at(-1.4,1){ $-1$};
\node(a)at(-1.4,-1){ $-1$};

\node(a)at(-1.5,0.4){ (2)};
\node(a)at(-1.5,-0.4){ (2)};
\end{tikzpicture} 
\end{center}
The generic linear for $t=0$ , with the linear system of hyperplane
sections resolved, is

\begin{center}
\begin{tikzpicture}
  \draw[thin ](0,0)--(4.5,0);
  
  \draw[thin,>-stealth,->](1.5,0)--+(1.1,0.7);
  
   \draw[fill=white   ] (0,0)circle(2pt);
    \draw[fill =white ] (1.5,0)circle(2pt);
     \draw[fill=white   ] (3,0)circle(2pt);
      \draw[fill=white   ] (4.5,0)circle(2pt);
      
      \node(a)at(0,0.3){ $-5$};
        \node(a)at(0.2,-0.3){ (1)};
         \node(a)at(-0.2,-0.3){ [8]};

        \node(a)at(1.5,0.3){ $-1$};
        \node(a)at(1.5,-0.3){ (5)};
\node(a)at(3,0.3){ $-2$};
        \node(a)at(3,-0.3){ (3)};

\node(a)at(04.5,0.3){ $-3$};
        \node(a)at(4.5,-0.3){ (1)};
\end{tikzpicture} 
\end{center}

So there is just one \L-node, and it is equal to neither of the
original vertices of the resolution. We compute the rate at the
\T-node from the Puiseux expansion of the branches of $\Delta$ and we
obtain the following bilipschitz description:

\begin{center}
 \begin{tikzpicture}
   \draw[thin ](0,0)--(1.5,0);
    \draw[thin ](0,0)--+(-1.4,0.7);
      \draw[thin ](0,0)--+(-1.4,-0.7);

\draw[thin,>-stealth,->](1.5,0)--+(1.3,0.2);
\draw[thin,>-stealth,->](-1.4,0.7)--+(-1.2,0.55);
\draw[thin,>-stealth,->](-1.4,-0.7)--+(-1.2,-0.555);

 \draw[fill    ] (0,0)circle(2pt);
   \draw[fill   ] (1.5,0)circle(2pt);
   
     \draw[fill    ] (-1.4,0.7)circle(2pt);
       \draw[fill   ] (-1.4,-0.7)circle(2pt);
   
\node(a)at(-0.2,0.3){ $-5$};
\node(a)at(0.3,0.3){ [8]};
    \node(a)at(0,-0.3){ $\it 2$};
\node(a)at(1.5,-0.3){ \it 1};
\node(a)at(1.5,0.3){ $-2$};

\node(a)at(-1.4,1){ $-1$};
\node(a)at(-1.4,-1){ $-1$};

\node(a)at(-1.5,0.4){ \it 1};
\node(a)at(-1.5,-0.4){ \it 1};

 \node(a)at(0.8,1.3){ $t\neq0$};

\begin{scope}[xshift=4.5cm]
 \node(a)at(1.0,1.3){ $t=0$};
  \draw[thin ](0,0)--(4.5,0);
  
  \draw[thin,>-stealth,->](1.5,0)--+(1.1,0.7);
  
   \draw[fill   ] (0,0)circle(2pt);
    \draw[fill   ] (1.5,0)circle(2pt);
     \draw[fill=white   ] (3,0)circle(2pt);
      \draw[fill=white   ] (4.5,0)circle(2pt);
      
      \node(a)at(-0.3,0.3){ $-5$};
         \node(a)at(0.2,0.3){ [8]};
          \node(a)at(0,-0.3){ $\it 2$};

        \node(a)at(1.5,0.3){ $-1$};
        \node(a)at(1.5,-0.3){ \it 1};
\node(a)at(3,0.3){ $-2$};

\node(a)at(04.5,0.3){ $-3$};

\end{scope}
 \end{tikzpicture} 
  \end{center}
\end{example}

\end{document}